\newcommand{\SageMath}{\texttt{SageMath}\xspace}
\newcommand{\Sage}{\texttt{Sage}\xspace}
\newcommand{\SageManifolds}{\texttt{SageManifolds}\xspace}
\newcommand{\Python}{\texttt{Python}\xspace}
\newcommand{\ticket}[1]{\href{https://trac.sagemath.org/ticket/#1}{\#{#1}}}
\numberwithin{equation}{section}
\newtheorem{mythm}[equation]{Theorem}
\newtheorem{mylem}[equation]{Lemma}
\theoremstyle{definition}
\newtheorem{mydef}[equation]{Definition}
\newcommand{\scal}[2]{\left< #1 , #2 \right>}
\newcommand{\abs}[1]{\left| #1 \right|}
\newcommand{\restrict}[2]{\left. #1 \right|_{#2}}
\newcommand{\im}{\mathrm{im}}
\newcommand{\Bold}[1]{\mathbf{#1}}
\newcommand{\proj}{\mathrm{pr}}
\newcommand{\diffd}{\mathrm{d}}
\newcommand{\expe}{\mathrm{e}}
\newcommand{\tr}{\mathrm{tr}}
\newcommand{\Pf}{\mathrm{Pf}}
\newcommand{\ind}{\mathrm{ind}}
\newcommand{\id}{\mathrm{id}}
\newcommand{\NN}{\mathbb{N}}
\newcommand{\ZZ}{\mathbb{Z}}
\newcommand{\RR}{\mathbb{R}}
\newcommand{\CC}{\mathbb{C}}
\newcommand{\KK}{\mathbb{K}}
\newcommand{\HH}{\mathbb{H}}
\newcommand{\CP}{\CC\mathbb{P}}
\newcommand{\Sphere}{\mathbb{S}}
\newenvironment{smallpmatrix}
{\left(\begin{smallmatrix}}
	{\end{smallmatrix}\right)}
\definecolor{kwcolor}{HTML}{3a821d}
\definecolor{strcolor}{HTML}{a03626}
\definecolor{ccolor}{HTML}{477998}
\newcommand{\myshade}{75}
\newcommand{\defstyle}[1]{\textit{\textbf{#1}}}
\newlist{myenum}{enumerate}{2}
\setlist[myenum,1]{label=\roman*)}
\setlist[myenum,2]{label=\alph*)}
\newlist{Steps}{enumerate}{1}
\setlist[Steps,1]{label=\bfseries Step~\arabic*.,leftmargin=*}
\definecolor{incolor}{HTML}{234c66}
\definecolor{outcolor}{HTML}{b73e2c}
\definecolor{cellborder}{HTML}{CFCFCF}
\definecolor{cellbackground}{HTML}{F7F7F7}
\newcommand{\wshade}{90}
\newlength{\promptwidth}
\newcommand{\prompt}[4]{%
	\makebox[0pt][r]{\texttt{\color{#2}#1[#3]:#4}}\vspace{-\baselineskip}%
}
\newtcolorbox{NBoutM}{
	breakable,
	enhanced,
	boxrule=.5pt,
	left skip = \promptwidth,
	size=fbox, 
	pad at break*=1mm, 
	opacityfill=0,
	title=\prompt{}{outcolor!\wshade!white}{\theNBin}{\hspace{5.4pt}},
	fonttitle=\linespread{1}\small,
}
\newcounter{NBin}
\bfseries\color{kwcolor},
\begin{document}

\pagenumbering{gobble}
\includepdf[pages=-]{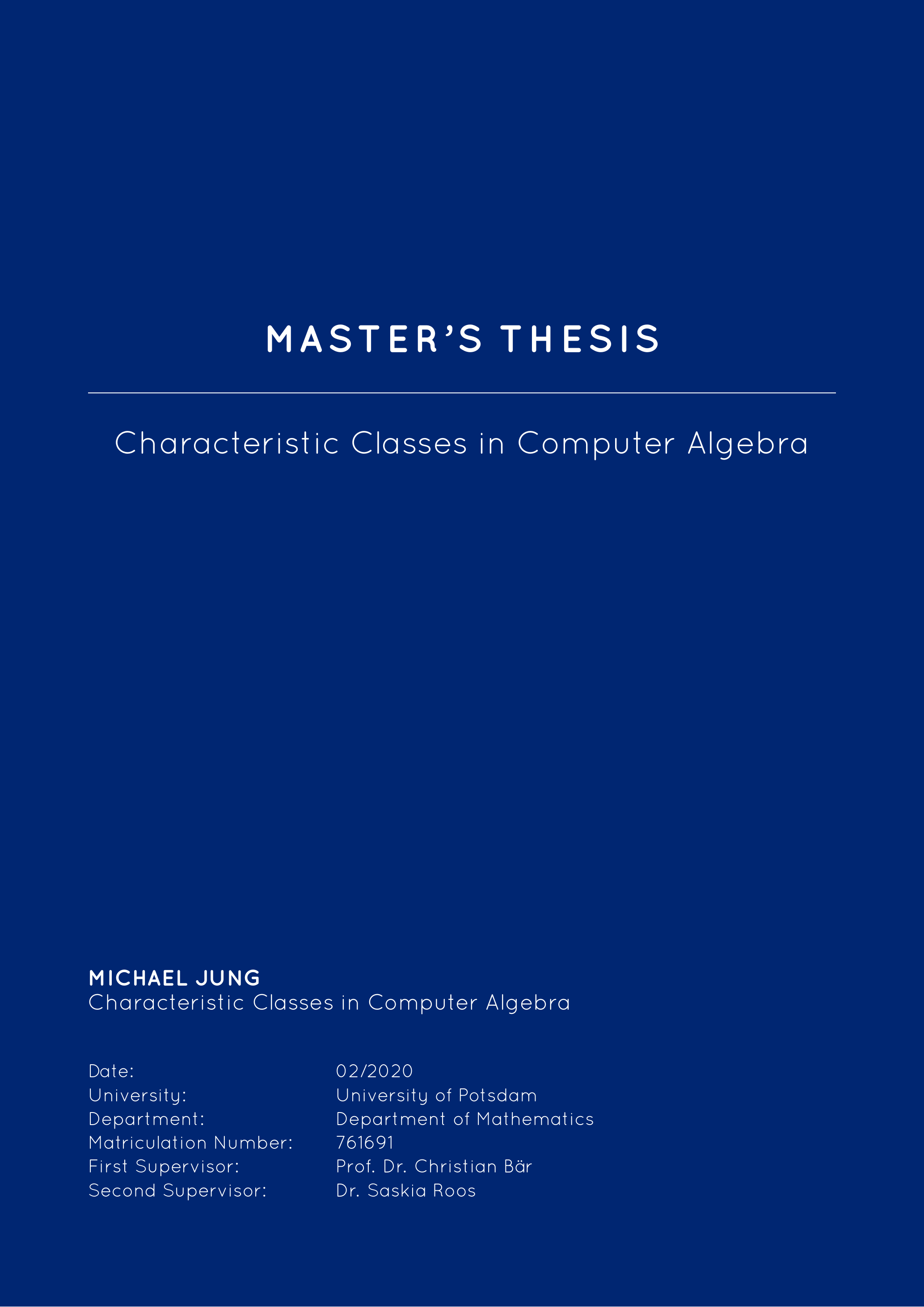}
\clearpage
\includepdf[pages=-]{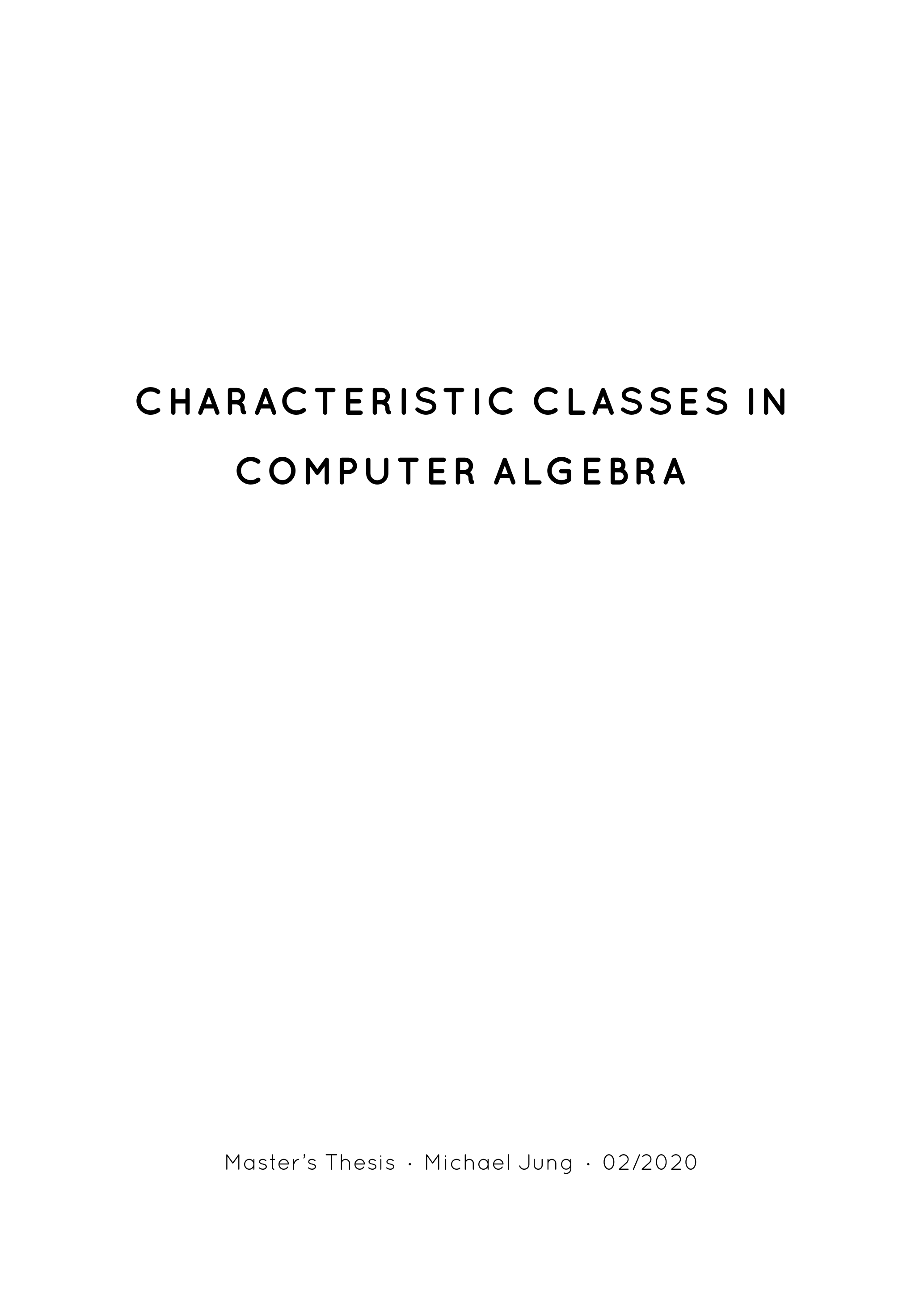}
\clearpage

\pagenumbering{roman}
\newgeometry{
	a4paper,
	includeheadfoot,
	top    = 2cm,
	bottom = 2.95cm,
	left   = 3.25cm, %
	right  = 3.25cm, %
}

\includepdf[pages=-]{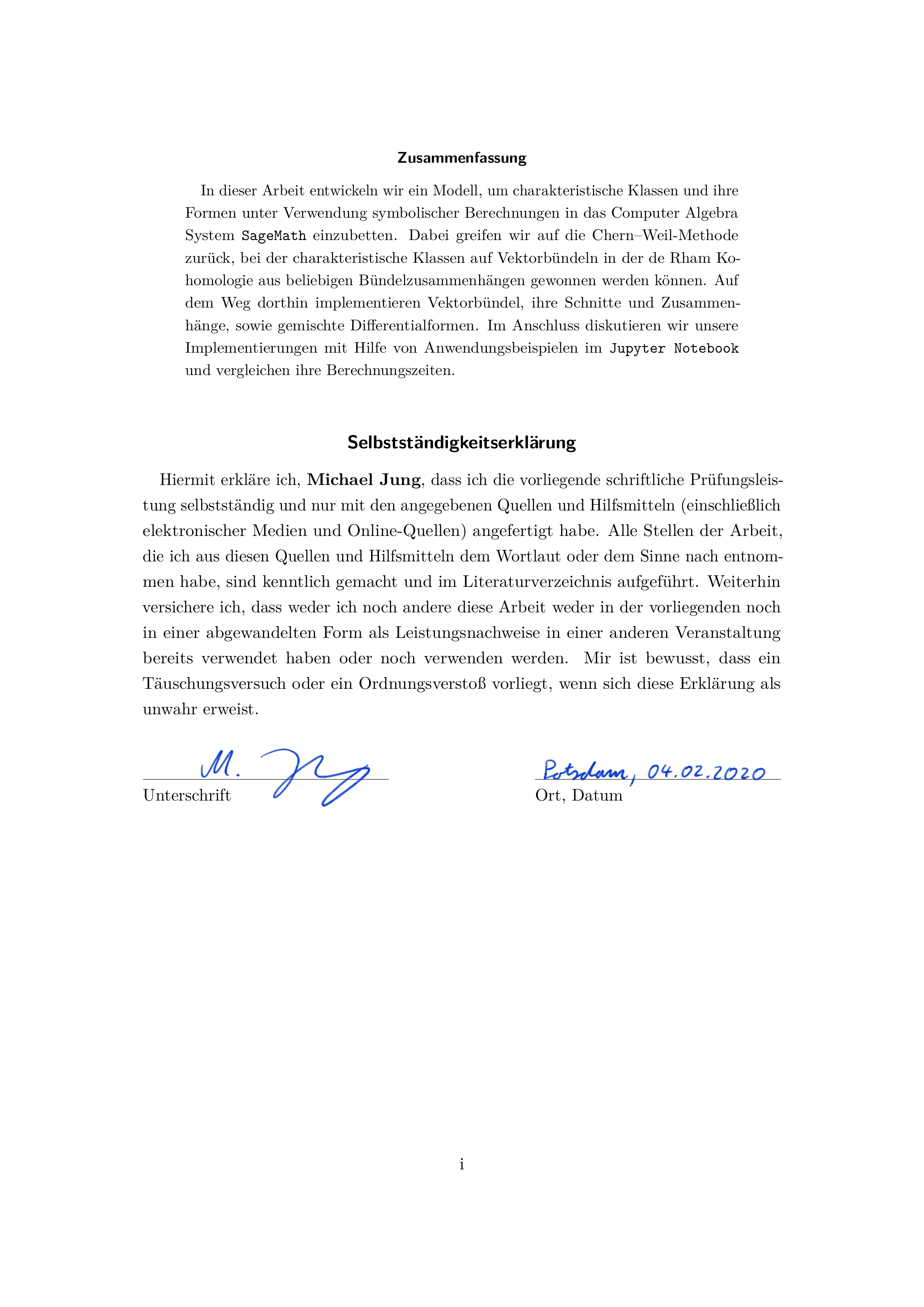}
\clearpage

\thispagestyle{front}
\renewcommand\abstractname{Abstract}
\begin{abstract}
	We develop a framework to compute characteristic classes and their forms in the computer algebra system \SageMath using symbolic calculus. In order to do this, we make use of the Chern--Weil approach, in which characteristic classes of vector bundles in the de~Rham cohomology are obtained by arbitrary connections. Along the way, we implement the notion of vector bundles, their sections and connections as well as mixed differential forms in \SageMath. We conclude by discussing some application examples exposed in \texttt{Jupyter Notebook} and eventually address the issue of computational cost.
\end{abstract}
\medskip
\subsection*{Preface and Acknowledgments}
Foremost, I would like to thank Christian Bär for offering me the opportunity to write my master’s thesis on this fascinating topic with him. He constantly gave me new inputs and provided me with further literature. I especially thank him for his patience and constant motivation.

The whole development in \Sage is collaboratively organized at its trac server \url{https://trac.sagemath.org/}, where code proposals undergo a rigorous review process. On that account, I would like to thank Travis Scrimshaw and especially Éric Gourgoulhon for their discussion, their attempt to answer my naive questions and for reviewing my code. Thanks to them, the code is now part of the official \Sage project.

On that occasion I am grateful, and at the same time indepted, to all my proofreaders who have found each unnecessary \enquote{of course}, explained the English comma rules, and gave me an adequate feedback. In this context, I particularly would like to mention Lashi Bandara and Katharina Kuckla, who supported me in improving the presentation, as well as Florian Hanisch, Andreas Hermann, Saskia Roos and Hemanth Saratchandran, who also gave me valuable feedback.

Note that the breadth of topics in this thesis does not reflect the effort of work during the development stage. The primary goal of this thesis is a working algorithm for characteristic classes completely embedded in \Sage. Hence, an essential part of this thesis is devoted to that topic. Regardless, the bulk of the work was dedicated to building a firm foundation rather than characteristic classes themselves; meaning vector bundles and mixed differential forms. During this process, new ideas kept coming up making it difficult to find an appropriate finish line. However, I am confident that these ideas will find a place in future versions of \Sage.
\clearpage

\thispagestyle{front}
\tableofcontents
\clearpage

\pagenumbering{arabic}
\newgeometry{
	includehead,
	top    = 2cm,
	bottom = 2.95cm,
	left   = 3.25cm, %
	right  = 3.25cm, %
}

\section{Introduction}
\subsection{SageMath: An Overview}
Nowadays it is nearly impossible to avoid the use of computer algebra systems in various fields of science. They have proved to be an essential tool for the verification of scientific results and the testing of scientific hypotheses. At the same time, their use in computational aspects of mathematics has led to significant theorems within various fields of mathematics. By now, there are many computer algebra systems on the market like \texttt{Magma}, \texttt{Maple}, \texttt{Mathematica} and \texttt{MATLAB}. However, their source codes are typically under a proprietary license and can therefore not be examined, verified or altered. Consequently, mathematicians are unallowed to adapt the code for their own research, which particularly turns out to be a problem when it comes to very specialized areas.

The computer algebra system \SageMath or \Sage (\enquote{\textbf{S}ystem for \textbf{A}lgebra and \textbf{G}eometry \textbf{E}xperimentation}) aims to combine the advantages of all existing algebra systems and offers an adaptable open-source tool for research and teaching in mathematics. It makes use of nearly 100 open-source packages such as \texttt{Pynac}, \texttt{Maxima}, \texttt{GAP}, \texttt{SymPy}, \texttt{matplotlib} and \texttt{Singular}, which are all interfaced via the \Python programming language. Apart from that, \Sage contains a vast library of additional code, which provides functionalities in advanced mathematics like matroids, modular forms or manifolds. We provide a brief overview, but more information can be found on the official web page~\cite{sagemath}.

\paragraph{Some Notes on History.}
Created in 2005 by William Stein and strongly influenced by \texttt{Magma}, \Sage was designed as an open-source alternative to non-free computer algebra systems~\cite{stein:sage}. Since then, it has been adapted, improved and extended by over 271 developers from all over the world -- mostly by mathematicians~\cite{sagemath:developer}. Notably in 2013, Éric Gourgoulhon, Michał Bejger and Marco Mancini developed an additional package providing differentiable manifolds and extensive symbolic tensor algebra~\cite{gour:sagemanifolds}. The project became officially known under the name \texttt{SageManifolds} and has been predominantly used for applications in general relativity~\cite{gour:sagemanifolds:website:publications}. As of 2016, it is fully embedded into \Sage and has since been developed further. At this stage, it supports scalar fields, tensor fields, differential forms, connections and more~\cite{gour:sagemanifolds:website}.

\paragraph{Capabilities and Limitations.}
As \Sage is based on the \Python programming language, it inherits all functionalities from \Python, in particular object-oriented programming. Apart from an extensive symbolic calculus making use of a huge library of algorithms imported from \texttt{Maxima} and \texttt{Singular}, the \Sage project offers a slightly modified syntax that is closer to actual mathematics. In \Python for example, integers are represented by the type \texttt{int} and real numbers by \texttt{float}. This approach is not suitable for mathematics since there are infinitely many algebraic structures which are not feasible to implement via individual classes. Due to this reason, \Sage has been set up with a parent-element-type pattern instead. \defstyle{Parents} represent mathematical sets endowed with certain structures, while \defstyle{elements} reflect their members~\cite{sagemath:reference:primer}. Specifically, each parent belongs to a dynamically generated class, which represents the parent's mathematical category. Within this setting, \Sage performs automatic conversions prior to binary operations according to strict mathematical rules~\cite{sagemath:reference:coercions}.\footnote{In proper \Python syntax, there is typically no way to perform a common operation on objects with different types, even if both are algebraically compatible. This is due to the fact that \Python is a \emph{strongly typed} language.} Such conversions are called \defstyle{coercions} and portray the key concept of \Sage. A full record of \Sage's capabilites and limitations can be found in~\cite{sagemath:reference}.

\subsection{Aims and Scopes}
Characteristic classes play a substantial role in many parts of mathematics and physics. They are related to indices of elliptic operators and yield differential geometric as well as topological invariants. However, computations by hand are often tedious and can easily introduce errors, mostly due to the non-commutativity of the cup product. Their complexity rapidly increases with higher dimensions. Already in dimension four, the physically interesting case, such computations can be extremely difficult and time consuming. For this reason, we aim for a working computer algorithm, so that we can test our theories and verify results in a time efficient manner. One attempt has been followed by Oleksandr Iena in 2015, performing symbolic computations on the Chern roots in \texttt{Singular} by using multiplicative sequences~\cite{iena:chern}. Subsequently in 2019, Zsolt Szilágyi provided an algorithm in \texttt{Singular} for the calculation of Chern classes associated to the tensor product of vector bundles~\cite{szilgyi:chern}. These algorithms are relatively fast and provide an efficient approach for purely algebraic purposes. Yet, in order to evaluate integrals on characteristic classes, we prefer a more geometric approach yielding results in concrete coordinates. As motivated in the preceding section, the most promising candidate to realize such a project is given by \Sage; it is open-source and comes with a comprehensive symbolic tensor calculus delivered by the \SageManifolds extension.

\paragraph{Our Work.}
As part of this thesis, our goal is to implement characteristic classes into \Sage by using Chern--Weil theory. It provides a construction for characteristic classes in the de Rham cohomology of the underlying base space by inserting curvature forms into invariant polynomials. However, before we transfer this algorithm into \Sage, some preparations have to be made. On the one hand, we require vector bundles including local frames and bundle connections. On the other hand, since invariant polynomials must be applied to curvature forms, a new algebraic structure representing mixed differential forms has to be established. Meanwhile, some parts of the existing code need to be adjusted or even corrected if necessary. After laying those foundations, characteristic classes fit into the setup quite naturally. The whole development is accompanied by a critical review process maintained by other \Sage developers. A list of all modifications due to this project is given in Table~\ref{tab:intro:tickets}. The comprehensive git difference statistics can be found on \url{https://trac.sagemath.org/} and is also provided on the CD attached to this thesis.

\begin{table}[t]
	\centering
	\footnotesize
	\begin{tabular}{lccr}
		\toprule
		\textbf{Implementation Task} & \textbf{Type} & \textbf{Ticket} & \textbf{Merged in} \\
		\midrule
		Output dicts and lists copied & \textit{Bugfix} & \ticket{28563} & \texttt{Sage 9.0.beta1} \\
		Scalar fields: wedge product fix & \textit{Bugfix} & \ticket{28579} & \texttt{Sage 9.0.beta1} \\
		Better treatment of zero element & \textit{Performance} & \ticket{28562} & \texttt{Sage 9.0.beta3} \\
		Scalar fields: restrictions & \textit{Enhancement} & \ticket{28554} & \texttt{Sage 9.0.beta4} \\
		Tensor fields: \texttt{set\_restriction} fixed & \textit{Bugfix} & \ticket{28628} & \texttt{Sage 9.0.beta4} \\
		Tensor fields: consistent naming & \textit{Enhancement} & \ticket{28564} & \texttt{Sage 9.0.beta7} \\
		Automorphism fields: notation of inverse & \textit{Bugfix} & \ticket{28973} & \texttt{Sage 9.1.beta1} \\
		\arrayrulecolor{darkgray}\hline
		Vector bundles and sections & \textit{Enhancement} & \ticket{28159} & \texttt{Sage 9.0.beta3} \\
		Vector bundles: section module name fix & \textit{Bugfix} & \ticket{28690} & \texttt{Sage 9.0.beta5} \\
		\arrayrulecolor{darkgray}\hline
		Mixed differential forms & \textit{Enhancement} & \ticket{27584} & \texttt{Sage 8.8.beta3} \\
		Mixed forms: code improvements & \textit{Enhancement} & \ticket{28578} & \texttt{Sage 9.0.beta5} \\
		Mixed forms: improved coercions & \textit{Enhancement} & \ticket{28916} & \texttt{Sage 9.1.beta1} \\
		Mixed forms: better treatment of zero/one & \textit{Performance} & \ticket{28921} & \texttt{Sage 9.1.beta1} \\
		\arrayrulecolor{darkgray}\hline
		Characteristic classes & \textit{Enhancement} & \ticket{27784} & \texttt{Sage 9.0.beta8} \\
		\bottomrule
	\end{tabular}
	\caption{All implementations developed as part of this thesis. A full description and git difference statistics can be found either on \Sage's trac server \url{https://trac.sagemath.org/} or on the CD attached to this thesis.}\label{tab:intro:tickets}
\end{table}

\paragraph{Outline of this Thesis.}
In this thesis, we present our implementations delivered to \Sage. Furthermore, the thesis is partly intended as a compendium. In Chapter~\ref{ch:vec}, we start by recalling basic facts about vector bundles. We briefly explain their realization in \Sage and provide two illustrative examples. In Chapter~\ref{ch:mixed}, we introduce mixed differential forms and investigate their algebraic structure, highlighting important properties in anticipation of characteristic classes. We sketch the idea behind their algebraic realization in \Sage and give elementary examples exemplifying their usage. The subsequent Chapter~\ref{ch:char_class} addresses to characteristic classes. There, we provide a more detailed but still compressed introduction into Chern--Weil theory. Afterwards, we thoroughly discuss our algorithm in \Sage and eventually provide three simple examples. One last example is devoted to a more sophisticated computation of an $\hat{A}$-form on a Lorentzian foliation of Berger 3-spheres. This chapter is closed with a summary of our work and future prospects.

During this thesis, we assume basic knowledge about differential geometry. Introductory as well as advanced literature on that topic can be found in~\cite{baer:diffgeo, greub:con, hirsch:differential_topology, lee:smooth_manifolds, walschap:metric_structures}. To fully understand our implementation details, we require some understanding of object-oriented programming. A short course with respect to \texttt{Python 3} is dedicated to~\cite{realpython:object}. Moreover, we recommend to have a vague idea behind the \Python module structure of \Sage, especially with respect to \texttt{sage.manifolds} and \texttt{sage.tensor.modules}. The full module index is provided in~\cite{sagemath:reference:module_index}. For our \Sage examples, we expect rudimentary knowledge about \Python's syntax. Free \texttt{Python 3} introduction courses can be found in~\cite{python_everybody, python_in_10}. As aforementioned, \Sage's syntax slightly differs to that of \Python's. We therefore recommend~\cite{zimmermann:sagemath, stein:sagetut} and the reference manual~\cite{sagemath:reference}. An extensive documentation as well as a wide range of examples on the usage of manifolds in particular can be found in~\cite{gour:tensor_lecture, gour:examples, gour:sagemanifolds:website:doc, sagemath:reference:manif}.

\subsection{Conventions and Setup}

\paragraph{The Mathematical Setup.}
An $n$-dimensional manifold is a topological space $M$ obeying the following axioms:
\begin{myenum}
	\item $M$ is \defstyle{locally homeomorphic} to $\RR^n$, i.\,e.\ each point $p \in M$ has a neighborhood being homeomorphic to an open subset of $\RR^n$.
	\item $M$ is a \defstyle{Hausdorff space}, i.\,e.\ any two distinct points $p,q \in M$ have disjoint open neighborhoods.
	\item $M$ is \defstyle{second-countable}, i.\,e.\ $M$ admits a countable family of open subsets $(\mathcal{B}_i)_{i \in I}$ such that every open set on $M$ can be written as a union of members of this family.
\end{myenum}
Beware that in some literature the latter condition is replaced with paracompactness. Even though manifolds can be considered over more general fields, we want to restrict ourselves to the most common case $\RR$. If a topological manifold $M$ admits a differentiable structure, that is, $M$ is covered by charts such that all transition maps are differentiable up to a certain degree, we say $M$ is a \defstyle{differentiable manifold}. As usual, manifolds having a smooth structure are called \defstyle{smooth manifolds}. Note that any manifold endowed with a differentiable structure automatically admits a compatible \emph{smooth} structure~\cite[Thm.~2.9]{hirsch:differential_topology}. We henceforth distinguish only between topological and smooth structures.

\paragraph{The SageMath Setup.}
At the end of each chapter, we illustrate our implementations with examples in \Sage. For this, we use \texttt{Jupyter Notebook}, cf.~\cite{jupyter}, running on the \Sage kernel. The associated \texttt{ipynb}-file with reduced explanations is included on the CD. Within the notebook, inputs are framed with a gray box and designated with blue numbers in brackets:
\begin{NBin}
print('Hello World!')
\end{NBin}
The corresponding output is indicated by the same number in red:\medskip
\begin{NBout}
Hello World!
\end{NBout}
Of course, we are using the most recent version of \Sage so that all our written code is accessible:
\begin{NBin}
version()
\end{NBin}
\begin{NBout}
'SageMath version 9.1.beta1, Release Date: 2020-01-21'
\end{NBout}
Manifolds can be declared by stating the manifold's dimension and name with the following command:
\begin{NBin}
Manifold(2, 'M')
\end{NBin}
\begin{NBout}
2-dimensional differentiable manifold M
\end{NBout}
For mathematical purposes, we want to have a \LaTeX-typeset output by default. This can be achieved with the following line:
\begin{NBin}
\end{NBin}

\section{Vector Bundles}\label{ch:vec}

\subsection{Mathematical Preliminaries}
This section is dedicated to the key concept of this thesis: vector bundles. We review basic definitions and briefly outline structures such as {sections}, {bundle metrics} and {bundle connections}. For a comprehensive discussion about vector bundles, we refer to~\cite[Ch.~10]{lee:smooth_manifolds}, \cite[§2]{milnor:char_classes},~\cite[Ch.~5]{baer:diffgeo}. We start with the definition of a vector bundle.
\begin{mydef}
	Let $M$ be a topological manifold and $\KK=\RR,\CC$. A \defstyle{vector bundle of rank $\boldsymbol{n}$ over $\boldsymbol{M}$ with values in $\boldsymbol{\KK}$} is a topological manifold $E$ together with a surjective continuous map $\pi: E \to M$ such that for every point $p \in M$ there is an open neighborhood $U \subset M$ of $p$ with
	\begin{myenum}
		\item each \defstyle{fiber} $E_q := \pi^{-1}(q)$ at $q \in U$ is endowed with the vector space structure of $\KK^n$,
		\item there is a homeomorphism ${\psi : \restrict{E}{U} \to U \times \KK^n}$ from the restriction $\restrict{E}{U}:=\pi^{-1}(U)$ onto $U \times \KK^n$, called \defstyle{local trivialization}, such that
		\begin{myenum}
			\item the following diagram commutes:
			\begin{equation*}
			\begin{tikzcd}[column sep=tiny]
			\restrict{E}{U} \arrow["\pi", swap, dr] \arrow[rr, "\psi"] & & U \times \KK^n \arrow[dl, "\proj_1"] \\
			& U
			\end{tikzcd}
			\end{equation*}
			\item for each $q \in U$, the map $v \mapsto \psi^{-1}(q,v)$ is a vector space isomorphism between $\KK^n$ and $E_q$.
		\end{myenum}
	\end{myenum}
\end{mydef}
The space $M$ is called \defstyle{base space}. In contrast to that, the space $E$ is termed \defstyle{total space}. If both spaces are smooth manifolds, $\pi: E \to M$ is assumed to be a smooth submersion and all local trivializations are required to be diffeomorphisms, one speaks of a \defstyle{smooth} or \defstyle{differentiable} vector bundle. According to the context, we write \emph{\enquote{$E$ is a vector bundle over $M$}}, \emph{\enquote{$E \to M$ is a vector bundle}} or \emph{\enquote{$\pi:E \to M$ is a vector bundle}}. Notice that most operations applicable to vector spaces can be extended to vector bundles by performing the corresponding operation fiberwise. This particularly includes duals, exterior products, tensor products and direct sums.

Now suppose that $\psi_1,\psi_2$ are two local trivializations over $U_1$ and $U_2$ respectively with a non-empty overlap. Since every local trivialization defines an isomorphism fiberwise, the composite function
\begin{align*}
	\psi_2 \circ \psi^{-1}_1 : U_1 \cap U_2 \times \KK^n \to U_1 \cap U_2 \times \KK^n ,
\end{align*}
called \defstyle{transition map}, must be of the form
\begin{align}\label{eq:vec:transition}
	\left(\psi_2 \circ \psi^{-1}_1\right)(p,v) = \left(p, g(p)\,v\right) ,
\end{align}
where $g: U_1 \cap U_2 \to \mathrm{GL}(n, \KK)$ is a $C^r$-function. The function $g$ is called the \defstyle{transition function}. Here and in the following, we fix $r \in \{ 0, \infty \}$ depending on whether the bundle is topological or smooth.

\paragraph{Sections.}
Suppose $\pi : E \to M$ is a vector bundle of rank $n$. A section $\sigma$ on $E$ is a $C^r$-function, mapping each point $p$ on $M$ to its fiber $E_p$, i.\,e.\ $\pi \circ \sigma = \id_M$. Intuitively, in terms of trivializations, a section locally looks like a graph. For an open subset $U \subset M$, we henceforth define the \defstyle{space of $\boldsymbol{C^r}$-sections on $\boldsymbol{U}$} to be:
\begin{align*}
	C^r(U;E) := \left\{ \sigma: U \to E~\mbox{of class}~C^r ~|~ \sigma(p)\in E_p ~\mbox{for each}~p \in U \right\}.
\end{align*}
This space is naturally endowed with two algebraic structures. First of all, it is easy to see that $C^r(U;E)$ is a vector space over $\KK$ of infinite dimension. More interestingly, the space $C^r(U;E)$ is also a \emph{module} over the space of scalar fields $C^r(U,\KK)$ via pointwise multiplication.\footnote{More precisely, the sheaf of sections on $M$ is an $\mathcal{O}_M$-module, see~\cite[Ch.~13]{vakil:alggeo}.} Modules behave very similar to vector spaces, except that they have coefficients in arbitrary rings, here in the commutative ring $C^r(U,\KK)$, instead of in a field.

An important subcase is devoted to $C^r(U;E)$ being a \emph{free module}, which means that it is generated by a linearly independent set in terms of a module. We call such a set \defstyle{local frame}. Equivalently, a local frame can be seen as a set of sections forming a basis in every fiber $E_p$ at each point $p \in U$. This turns $C^r(U;E)$ into a free module of rank $n$. A vector bundle $E \to M$ whose global section module $C^r(M;E)$ is free is called \defstyle{trivial}.

If $U$ is a trivialization domain, we obtain a local frame very easily. Say $(e_1, \ldots, e_n)$ is the standard basis of $\KK^n$ and $\psi: \restrict{E}{U} \to U \times \KK^n$ is a trivialization. Then the map $p \mapsto \psi^{-1}(p,e_i)$ defines a $C^r$-section on $U$ for each $i=1,\ldots,n$. These maps assemble a local frame on $U$ as they constitute a basis in each fiber. Conversely, if $C^r(U;E)$ has a local frame $(\sigma_1, \ldots, \sigma_n)$, the bundle admits a trivialization over $U$ by setting $\tilde{\psi}\left(p, (v_1, \ldots, v_n)\right) = \sum^n_{i=1} v_i \, \sigma_i(p)$, which yields the desired $C^r$-diffeomorphism between $U \times \KK^n$ and $\restrict{E}{U}$.

Evidently by definition, local frames can be used to describe and determine arbitrary sections. For this, let $\{U_\alpha\}^\infty_{\alpha=1}$ be an open cover of $M$ such that each restriction $\restrict{E}{U_\alpha}$ is trivial.\footnote{In fact, it is always enough to consider a \emph{finite} cover. This is of course obvious for compact manifolds. The general case, however, can be proven in a similar way as to show that each topological manifold admits a finite atlas. Consult~\cite[Lem.~7.1, p.~77]{walschap:metric_structures} for details.} We fix a local frame $(e^\alpha_1, \ldots, e^\alpha_n)$ for each such domain $U_\alpha$. Then any global section $\sigma \in C^r(M;E)$ can be uniquely decomposed on each $U_\alpha$ in the following way:
\begin{align}\label{eq:vec:sec_decomp}
	\restrict{\sigma}{U_\alpha} = \sum^n_{i=1} f_\alpha^i \, e^\alpha_i, \quad \mbox{where}~f_\alpha^i \in C^r(U_\alpha, \KK).
\end{align}
This whole structure shows up as an useful model to implement sections into computer algebra, see Section~\ref{sec:vec:impl} for details.

A special kind of section is given by a \defstyle{bundle metric}. It is defined as a section $h$ in $C^r(M;E^* \otimes E^*)$ such that $h(p)$ defines a scalar product on the vector space $E_p$ at each point $p\in M$. Notice that every vector bundle, no matter whether real or complex, admits a bundle metric.

\paragraph{Tensor Bundles.}
In this paragraph, we discuss the most basic case of vector bundles: tensor bundles. Let $N$ be an $n$-dimensional smooth manifold and $(k, l) \in \NN^2$. We fix a point $q \in N$. A multilinear map
\begin{align*}
t:\ 	\underbrace{T_q^{\,*}N\times\cdots\times T_q^{\,*}N}_{k-\mbox{times}} \times \underbrace{T_q N\times\cdots\times T_q N}_{l-\mbox{times}} \longrightarrow \RR
\end{align*}
is called \defstyle{$\boldsymbol{(k,l)}$-tensor} with regards to the tangent space $T_q N$. The index $k$ denotes the \defstyle{contravariant} rank of $t$, while $l$ is its \defstyle{covariant} rank. If $k$ and $l$ equal zero, $t$ is simply a scalar. The corresponding space containing all $(k,l)$-tensors on $T_q N$ is written as $T_q^{\,(k,l)}N$. Likewise, we denote by
\begin{align*}
T^{\,(k,l)}N := \bigsqcup_{q \in N} T_q^{\,(k,l)}N
\end{align*}
the corresponding disjoint union on $N$. Now, let $M$ be another $m$-dimensional smooth manifold and $\varphi: M \to N$ be a smooth map. We define the \defstyle{tensor bundle of $\boldsymbol{(k,l)}$-tensors along $\boldsymbol\varphi$} to be the set
\begin{align*}
\varphi^* T^{\,(k,l)}N = \left\{ (p,t) \in M \times T^{\,(k,l)}N ~|~ t \in T_{\varphi(p)}^{\,(k,l)}N \right\}.
\end{align*}
By equipping the space $\varphi^* T^{\,(k,l)}N$ with the canonical footpoint map $\pi = \proj_1$, i.\,e.\ $(p,t) \mapsto p$, it inherits the structure of a vector bundle over $M$. This can be seen as follows. Let $(x^1, \ldots, x^n)$ be coordinates on an open subset $V \subset N$ such that $\varphi(p) = q \in V$ for a given $p \in M$. As usual, $( {\frac{\partial}{\partial x^1}}|_{q}, \ldots, {\frac{\partial}{\partial x^n}}|_{q})$ constitutes a basis on $T_q N$ and $( {\mathrm{d}x^1}|_{q}, \ldots, {\mathrm{d}x^n}|_{q})$ is its dual. Now, if we take a tensor $t \in T_{q}^{\,(k,l)}N$, its matrix entries with respect to that basis are given by
\begin{align*}
t^{a_1 \ldots a_k}_{\phantom{a_1 \ldots a_k}\, b_1 \ldots b_l} = t \left( \left.\frac{\partial}{\partial x^{a_1}}\right|_q, \dots, \left.\frac{\partial}{\partial x^{a_k}}\right|_q, \left.\mathrm{d}x^{b_1}\right|_q, \dots, \left.\mathrm{d}x^{b_l}\right|_q \right) \in \RR.
\end{align*}
These in turn induce a one-to-one correspondence between $U \times \RR^{n^{(k+l)}}$ and $\pi^{-1}(U)$, where we set $U = \varphi^{-1}(V)$:
\begin{align*}
(p,t) \mapsto \left(p, t^{1 \ldots 1}_{\phantom{1 \ldots 1}\, 1 \ldots 1}, \dots, t^{n \ldots n}_{\phantom{n \ldots n}\,n \ldots n} \right).
\end{align*}
A change of coordinates leads to an invertible linear transformation of the matrix entries. It is induced by the Jacobian matrix of the coordinate change and depends smoothly on the point $p$. Hence $\varphi^* T^{\,(k,l)}N$ is a smooth vector bundle over $M$ due to~\cite[Lem.~10.6]{lee:smooth_manifolds}. Incidentally, we observe that $\varphi^* T^{\,(k,l)}N$ is a smooth manifold of dimension $m + n^{(k+l)}$. A section of the tensor bundle $\varphi^* T^{\,(k,l)}N$ is called \defstyle{tensor field of type $\boldsymbol{(k,l)}$ along~$\boldsymbol\varphi$}.

The standard case of a tensor bundle over $M$ is given by $M=N$ and $\varphi=\id_M$. Common cases of tensor bundles over $M$ are the \emph{tangent bundle} $T^{\,(1,0)}M=TM$ and the \emph{cotangent bundle} $T^{\,(0,1)}M=T^{\,*}M$. If the tangent bundle is trivial, one says the manifold is \defstyle{parallelizable}.

\paragraph{Pullback Bundles.}
Motivated by the preceding paragraph, we want to generalize this construction to arbitrary vector bundles and obtain a new bundle under the presence of a map between manifolds. For this, let $M$ and $N$ be manifolds, $\pi:E\to N$ a vector bundle and $f:M \to N$ a function -- all of the same regularity. Out of that, we construct the following set:
\begin{align*}
f^* E := \{ (q,e) \in M \times E ~|~ f(q) = \pi(e) \}.
\end{align*}
Equipping $f^* E$ with the projection map $\pi':f^* E \to M$ given by the projection onto the first factor turns $f^* E$ into a vector bundle over $M$. Its local trivializations are induced by the ones of $E \to N$. Namely, if $\psi: \restrict{E}{U} \to U \times \KK^n$ is a local trivialization over $U\subset N$ then $\psi': \restrict{f^*E}{V} \to V \times \KK^n$ given by
\begin{align*}
	\psi'(q,e) = \left( q, \proj_2(\psi(e)) \right)
\end{align*}
is a local trivialization over $V = f^{-1}(U) \subset M$ on $f^*E$.

It is possible to transfer certain objects from $E$ to $f^*E$ via the pullback. For instance, given a section $\sigma \in C^r(M;E)$, we obtain a new section $f^* \sigma$ on $f^*E$ by composing it with~$f$, i.\,e.\ $$f^* \sigma = \sigma \circ f \in C^r(N;f^*E).$$ Similarly, most structures on vector bundles can be pulled back. This especially includes bundle metrics and bundle connections that we discuss in the proceeding~paragraphs.

\paragraph{Bundle Connections.}\label{par:vec:bundle_connections}
In this paragraph, we consider \emph{smooth} vector bundles. Depending on the bundle's base field, we possibly desire complex values on the cotangent bundle of its underlying manifold. We set $\KK=\RR,\CC$ and briefly denote $$T_{\KK}^{\,*}M = T^{\,*}M \otimes_{\RR} \KK.$$ By this notation, we mean that the tensor product is applied on each fiber. If $\KK$ is the complex field, we call this procedure \defstyle{complexification}. Even if nothing changes with $\KK$ being the real field, we keep this notation to clarify which field we are currently working on.\footnote{Moreover, one can examine differentiable manifolds and vector bundles over more general, non-discrete topological fields.} With this at hand, we start by declaring bundle connections.
\begin{mydef}\label{def:vec:bundle_connection}
	Suppose $E$ is a smooth (possibly complex) vector bundle over a manifold $M$. We choose $\KK$ to be $\RR$ or $\CC$ depending on whether $E$ is real or complex respectively. Then a \defstyle{bundle connection} is a $\KK$-linear map $$\nabla: C^\infty(M; E) \to C^\infty(M; E \otimes T_{\KK}^{\,*}M)$$ satisfying the Leibniz rule $$\nabla(f \cdot \sigma ) = \sigma \otimes \diffd f + f \cdot \nabla \sigma$$ for each section $s \in C^\infty(M;E)$ and $\KK$-valued scalar field $f \in C^\infty(M, \KK)$. If $X$ is a vector field on $M$, we further denote by $\nabla_X \sigma := (\nabla \sigma)(X)$ the \defstyle{covariant derivative of $\boldsymbol{\sigma}$ along $\boldsymbol{X}$}.
\end{mydef}
One of the basic properties of a connection is that it is a local operator and decreases support. Meaning, if the section $\sigma$ has its support on an open subset $U\subset M$ then $\nabla \sigma$ is supported on $U$ as well. In this way, it makes sense to restrict $\nabla$ to \emph{local} sections. Finally, notice that every smooth vector bundle possesses a bundle connection, compare \cite[Lem.~2,~p.~291]{milnor:char_classes}.

There is one particular class of connections that is quite important to us. Suppose $E$ is equipped with a bundle metric $\scal{\,\cdot\,}{\cdot\,}$. Then a bundle connection $\nabla$ on $E$ is called \defstyle{metric} or \defstyle{compatible} with respect to  $\scal{\,\cdot\,}{\cdot\,}$ iff
\begin{align*}
	\partial_X \scal{\sigma_1}{\sigma_2} = \scal{\nabla_X \sigma_1}{\sigma_2} + \scal{\sigma_1}{\nabla_X \sigma_2}
\end{align*}
holds for any $\sigma_1,\sigma_2 \in C^{\infty}(M;E)$ and $X \in C^\infty(M; TM)$.

For a given connection, the curvature can be obtained as follows. For each section $\sigma \in C^\infty(M;E)$ and each vector field $X,Y \in C^\infty(M;TM)$ we define the quantity
$$R(X,Y)\sigma = \nabla_X \nabla_Y \sigma - \nabla_Y \nabla_X \sigma - \nabla_{[X,Y]} \sigma .$$
This gives rise to an $\KK$-linear map
\begin{align*}
	R : C^\infty(M; E) \to C^\infty\!\left(M; E \otimes {\bigwedge}^2\, T_{\KK}^{\,*}M \right)
\end{align*}
which is called the \defstyle{curvature tensor}.

Now assume $E$ has rank $n$ and let $(e_1, \dots, e_n)$ be a local frame of $E$ on some open subset $U \subset M$. Then we can find 1-forms $\omega^j_i \in C^\infty(U ; T_{\KK}^{\,*}M)$ such that we can write
\begin{align}\label{eq:vec:def_con_form}
	\nabla e_i = \sum_{j=1}^{n} e_j \otimes \omega^{j}_i,
\end{align}
which induces an $(n \times n)$-matrix $\omega \in C^\infty\big(U ; \mathfrak{gl}(n,\KK) \otimes_{\RR} T^{\,*}M\big)$ called \defstyle{connection form matrix of $\boldsymbol\nabla$ with respect to $\boldsymbol e$}.\footnote{More specifically, $\omega$ is an $\mathfrak{gl}(n,\KK)$-valued form; compare Definition~\ref{def:mixed:valued_forms}.} As usual in the notion of Lie algebras, we denote $\mathfrak{gl}(n,\KK)={\mathrm{Mat}(n \times n, \CC)}$. Similarly, by substituting $\nabla$ with $R$, we obtain 2-forms $\Omega^{j}_i \in C^\infty(U ; {\bigwedge}^2\, T_{\KK}^{\,*}M)$ satisfying
\begin{align*}
	R \, e_i = \sum_{j=1}^{n} e_j \otimes \Omega^{j}_i.
\end{align*}
These give rise to an $(n \times n)$-matrix $\Omega \in C^\infty\big(U ; \mathfrak{gl}(n,\KK) \otimes_{\RR} {\bigwedge}^2\, T^{\,*}M\big)$, called \defstyle{curvature form matrix of $\boldsymbol\nabla$ with respect to $\boldsymbol e$}. A straightforward computation reveals the following relation between $\Omega$ and $\omega$:
\begin{align}\label{eq:vec:curv_from_con}
\Omega_i^j = \diffd \omega_i^j + \sum^n_{k=1} \omega_k^j \wedge \omega_i^k .
\end{align}

We want to make some remarks at this point. First, if $E$ is endowed with a bundle metric and a compatible connection $\nabla$, the associated curvature form matrix is skew-Hermitian for each orthonormal local frame. Secondly, it is readily checked that $\Omega$ transforms by $g \Omega g^{-1}$ as expected under a change of framing~${g: U \to \mathrm{GL}_n(\KK)}$. Beware that this is not the case for $\omega$: if $\omega'$ is the connection form matrix with respect to the new frame then we have
\begin{align*}
	\omega' = g^{-1} \diffd g + g^{-1} \omega g.
\end{align*}

\subsection{SageMath Implementation}\label{sec:vec:impl}
Vector bundles are an immediate generalization of the tangent bundle over differentiable manifolds. It is therefore not surprising that the preexisting code for symbolic tensor calculus can be used as a reference point for implementing vector bundles. In particular, each of our classes around vector bundles is based on a preexisting class taken as role model. Table~\ref{tab:vec:ingredients} displays all new classes implemented in \Sage. The right column shows the class on which it is based. In this section, we highlight the most important classes and its implementation concepts. A full list of supported features can be gathered from \Sage's reference manual~\cite{sagemath:reference:manif}.

\begin{table}[t]
	\centering
	\footnotesize
	\begin{tabular}{lcr}
		\toprule
		\textbf{Mathematical Object} & \textbf{Represented by} & \textbf{In Contrast to} \\
		\midrule
		Vector bundle & \texttt{TopologicalVectorBundle} & \texttt{DifferentiableManifold} \\
		$E \to M$ & \texttt{DifferentiableVectorBundle} & \\ \arrayrulecolor{darkgray}\hline
		Trivialization & \texttt{Trivialization} & \texttt{DiffChart} \\
		$\varphi:\restrict{E}{U} \to U \times \KK^n$ & & \\ \arrayrulecolor{darkgray}\hline
		Local frame & \texttt{LocalFrame} & \texttt{VectorFrame} \\
		$\left( \restrict{E}{U}, (e_1, \ldots, e_n) \right)$ & & \\ \arrayrulecolor{darkgray}\hline
		Section & \texttt{TrivialSection} & \texttt{TensorFieldParal} \\
		$s \in C^r(U;E)$ & \texttt{Section} & \texttt{TensorField} \\ \arrayrulecolor{darkgray}\hline
		Section module & \texttt{SectionFreeModule} & \texttt{TensorFieldFreeModule} \\
		$C^r(U;E)$ & \texttt{SectionModule} & \texttt{TensorFieldModule} \\ \arrayrulecolor{darkgray}\hline
		Fiber & \texttt{VectorBundleFiber} & \texttt{TangentSpace} \\
		$E_p$ & & \\ \arrayrulecolor{darkgray}\hline
		Fiber element & \texttt{VectorBundleFiberElement} & \texttt{TangentVector} \\
		$v \in E_p$ & & \\ \arrayrulecolor{darkgray}\hline
		Bundle connection & \texttt{BundleConnection} & \texttt{AffineConnection} \\
		$\nabla^E$ & & \\
		\bottomrule
	\end{tabular}
	\caption{A table of all ingredients necessary to realize vector bundles in \Sage. In the right column we see its analogue to the preexisting implementation of manifolds.}\label{tab:vec:ingredients}
\end{table}

\paragraph{Vector Bundles.}
Included in the \texttt{SageManifolds} package, differentiable manifolds are represented by instances of the class \texttt{DifferentiableManifold} which act like a \enquote{control center} to everything that is associated to them. They store all necessary information and communicate them to the subordinate structures. Providing a similar attempt, vector bundles are implemented via \texttt{TopologicalVectorBundle} belonging to the category \texttt{VectorBundles}. An instance of this class is uniquely determined by the base space $M$, the rank $n$, the underlying field and the total space's name $E$. Special cases like differentiable vector bundles and tensor bundles are inherited from this particular class. The full inheritance tree can be found in Figure~\ref{fig:vec:inheritance_vbundle}. Notice that tensor bundles completely fall back on the preexisting implementation of tensor fields.

\begin{figure}[t]
	\centering
	\tikzstyle{arrow} = [thick,->,>=stealth]
	\tikzstyle{factory} = [rectangle, rounded corners, minimum width=2.5cm, minimum height=.75cm,text centered, draw=black, fill=strcolor!50, thin]
	\tikzstyle{obj} = [rectangle, rounded corners, minimum width=2.5cm, minimum height=.75cm,text centered, draw=black, fill=gray!30, thin]
	\begin{tikzpicture}[node distance=\textwidth / 10, >=latex', thick]
	\node (top-vecb) [factory, align=center] {\footnotesize\texttt{TopologicalVectorBundle}};
	\node (diff-vecb) [factory, align=center, below of=top-vecb] {\footnotesize\texttt{DifferentiableVectorBundle}};
	\node (ten-vecb) [factory, align=center, below of=diff-vecb] {\footnotesize\texttt{TensorBundle}};
	
	\node (category) [obj, align=center, above of=top-vecb,xshift=-2.5cm] {\footnotesize\texttt{CategoryObject}};
	\node (unique) [obj, align=center, above of=top-vecb,xshift=2.5cm] {\footnotesize\texttt{UniqueRepresentation}};
	
	\draw [arrow] (diff-vecb) -- (top-vecb) node[midway,right,darkgray] {};
	\draw [arrow] (ten-vecb) -- (diff-vecb) node[midway,right,darkgray] {};
	\draw [arrow] (top-vecb) -- (category) node[midway,left,darkgray] {};
	\draw [arrow] (top-vecb) -- (unique) node[midway,right,darkgray] {};
	\end{tikzpicture}
	\caption{The inheritance diagram for the representative classes of vector bundles.}\label{fig:vec:inheritance_vbundle}
\end{figure}
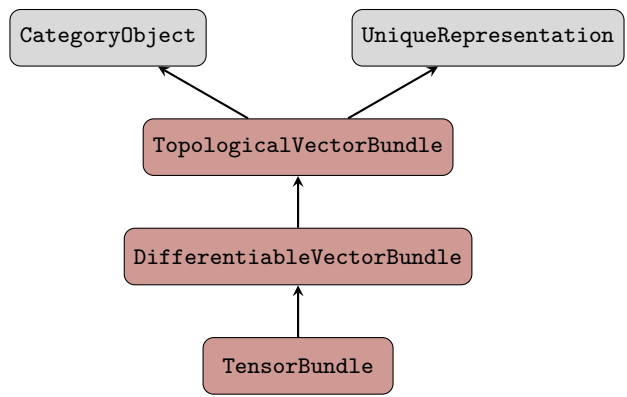

Figure~\ref{fig:vec:control_center} gives an insight of how vector bundles are realized to facilitate the interaction of the structures. Most objects can be obtained by invoking proper methods on the vector bundle's instance and are communicated back once they are created. This supplies each object in correspondence to the vector bundle with all the necessary information to fulfill its purpose.

\paragraph{Sections.}\label{par:vec:impl_sections}
For an actual realization, it is useful to see the space of sections $C^r(U;E)$ as a module over $C^r(U, \KK)$ rather than a vector space over $\KK$. The reason is simple: in terms of a vector space, $C^r(U;E)$ is infinite dimensional which is difficult to accomplish in computer algebra. In contrast, considering $C^r(U;E)$ as a module over $C^r(U, \KK)$, which has finite rank if it is free, means implementing a \emph{finite} structure. The implementation splits in two parts: an \emph{algebraic part} carrying out free modules of finite rank over arbitrary commutative rings, and a \emph{geometric part} patching everything together over the manifold. Both have already been implemented during the \texttt{SageManifolds} project but specialized for tensor fields. The approach for sections and its modules for vector bundles follows exactly the same idea, generalizing the preexisting implementation of tensor fields and relying on the available algebraic part. In a nutshell: each section module having a local frame is free and hence realized by exploiting the algebraic part. All other modules are assumed to be non-free. Their sections are patched together from restrictions living in the free modules due to~\eqref{eq:vec:sec_decomp}. We refer to the \Sage reference manual~\cite{sagemath:reference:manif, sagemath:reference:free_tensor} for more details. The initial implementation of tensor fields in \Sage is thoroughly discussed in~\cite[Sec.~4.4~et~seq.]{gour:sagemanifolds}.

\begin{figure}[t]
	\centering
	\tikzstyle{arrow} = [thick,->,>=stealth]
	\tikzstyle{factory} = [rectangle, rounded corners, minimum width=3cm, minimum height=1cm,text centered, draw=black, fill=strcolor!50, thin]
	\tikzstyle{obj} = [rectangle, rounded corners, minimum width=3cm, minimum height=1cm,text centered, draw=black, fill=gray!30, thin]
	\begin{tikzpicture}[node distance=\textwidth / 8, >=latex', thick]
	\node (vbundle) [factory, align=center] {{\scriptsize Vector bundle:} \\ $\displaystyle E \to M$};
	\node (triv) [obj, align=center, above of=vbundle] {{\scriptsize Trivialization:} \\ \scriptsize $\varphi: \restrict{E}{U} \to U \times \KK^n$};
	\node (frame) [obj, align=center, right of=vbundle,xshift=3.25cm] {{\scriptsize Local frame:} \\ \scriptsize $\left(\restrict{E}{U}, (e_1, \ldots, e_n)\right)$};
	\node (sec-mod) [obj, align=center, left of=vbundle,xshift=-3.25cm] {{\scriptsize Section module:} \\ $C^r(U;E)$};
	\node (sec) [obj, align=center, below of=vbundle] {{\scriptsize Section:} \\ $s \in C^r(U;E)$};
	
	\node (legend) [left of=triv] {};
	
	\draw [arrow] ([xshift=.6ex]vbundle.north) -- ([xshift=.6ex]triv.south) node[midway,right] {\tiny\texttt{trivialization}};
	\draw [arrow, dashed, darkgray] ([xshift=-.6ex]triv.south) -- ([xshift=-.6ex]vbundle.north) node[midway,left,darkgray] {\tiny stored in};
	
	\draw [arrow] ([yshift=.6ex]vbundle.east) -- ([yshift=.6ex]frame.west) node[midway,above] {\tiny\texttt{local\_frame}};
	\draw [arrow,dashed, darkgray] ([yshift=-.6ex]frame.west) -- ([yshift=-.6ex]vbundle.east) node[midway,below,darkgray] {\tiny stored in};
	
	\draw [arrow] ([yshift=.6ex]vbundle.west) -- ([yshift=.6ex]sec-mod.east) node[midway,above] {\tiny\texttt{section\_module}};
	\draw [arrow,dashed, darkgray] ([yshift=-.6ex]sec-mod.east) -- ([yshift=-.6ex]vbundle.west) node[midway,below,darkgray] {\tiny stored in};
	
	\draw [arrow] (vbundle) -- (sec) node[midway,right] {\tiny\texttt{section}};
	\draw [arrow] (triv) -| (frame) node[midway,above,xshift=-1.7cm] {\scriptsize\texttt{frame}};
	\draw [arrow] (frame) |- (sec) node[midway,below,xshift=-1.7cm] {\scriptsize\texttt{\_\_getitem\_\_}};
	\draw [arrow, dashed, darkgray] (sec-mod) |- (sec) node[midway,below,darkgray,xshift=1.7cm] {\scriptsize parent of};

	\end{tikzpicture}
	\caption{Our realization of a vector bundle plays the role of a \enquote{control center} to connect all subordinate structures.}\label{fig:vec:control_center}
\end{figure}
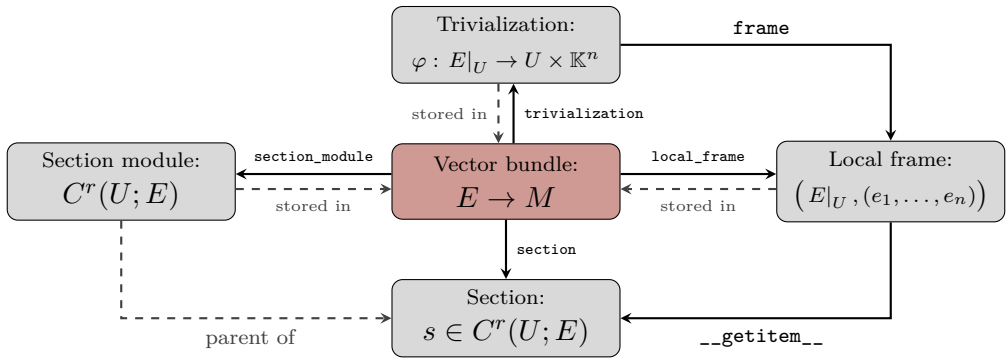

\paragraph{Trivializations and Frames.}
The \Python class \texttt{LocalFrame}  represents local frames within \Sage. It inherits from the algebraic counterpart \texttt{FreeModuleBasis}. Once a local frame on an open subset $U \subset M$ is initialized, \Sage acknowledges the corresponding section module $C^r(U ; E)$ as free. From then on, all sections can be expressed locally in this frame. Of course, a local frame itself consists of local sections. They can be returned by simple index operations applied to the object. A change of frame is performed by invoking the vector bundle's method \texttt{set\_change\_of\_frame} taking automorphisms on the corresponding free module. These automorphisms are represented by instances of \texttt{FreeModuleAutomorphism}. This class belongs to the algebraic implementation.

Even though local frames are the key concept here, sometimes it is more intuitive to think in terms of local trivializations instead. For this reason, our implementation offers the additional class \texttt{Trivialization}. It behaves a lot like \texttt{DiffChart} whose instances are differentiable charts. As well as for charts, the transition map can be stated by invoking \texttt{transition\_map} and inserting the desired transformation. In contrast to charts, the transformation is given by an $(n \times n)$-matrix consisting of scalar fields on the overlap, compare~\eqref{eq:vec:transition}. \Sage immediately translates the input into the language of frames and announces the change of frame to the vector bundle.

\paragraph{Bundle Connections.}
At this stage, bundle connections can only be used to assign connection forms and compute curvature forms with respect to a given local frame. The corresponding formula is given in~\eqref{eq:vec:curv_from_con}. They are realized via the \Python class \texttt{BundleConnection}. Each instance stores a \Python dictionary \texttt{\_connection\_forms} whose keys are local frames. To each key in \texttt{\_connection\_forms} corresponds another \Python dictionary obeying the allocation rule $(i,j) \mapsto \omega^i_j$ with regards to the defining equation~(\ref{eq:vec:def_con_form}). A more elaborated version is already on the agenda, see Section~\ref{sec:perspective}. Examples on the usage of bundle connections in \Sage are discussed along with examples of characteristic classes in Chapter~\ref{ch:char_class}.

\subsection{Example: The Möbius Bundle}\label{sec:vec:moebius}
We want to introduce a non-trivial line bundle and explain how it can be applied within \Sage making use of our implementation discussed in the previous section. First of all, we declare an equivalence relation on $\RR^2 \setminus \{ 0\}$:
\begin{align*}
	(x,y) \sim (x',y') \quad :\Longleftrightarrow \quad \exists \lambda \in \RR: (x,y) = \lambda\, (x', y').
\end{align*}
We define the \defstyle{real-projective space} by taking the corresponding quotient:
\begin{align*}
	\mathbb{RP}^1 := \faktor{(\RR^2 \setminus \{0\})}{\sim}.
\end{align*}
This space is a 1-dimensional topological manifold~\cite[Example~1.5]{lee:smooth_manifolds} and canonically endowed with \defstyle{homogeneous coordinates} given by the abbreviation ${[x:y] = \left[ (x,y) \right]_\sim}$. Then we can write
\begin{align*}
	\mathbb{RP}^1 = \left\{ [x:y] ~|~ (x,y) \in \RR^2 \setminus \{ 0\} \right\}.
\end{align*}
Geometrically, the real-projective space corresponds to all 1-dimensional subspaces of $\RR^2$. In order to obtain suitable charts on $\mathbb{RP}^1$, we define the two subsets
\begin{align*}
	U := \left\{ [1:u] ~|~ u \in \RR \right\} \quad \text{and} \quad V := \left\{ [v:1] ~|~ v \in \RR \right\}
\end{align*}
evidently covering $\mathbb{RP}^1$.
\begin{figure}[t]
	\centering
	\begin{tikzpicture}
	\draw [->,thick,>=stealth'] (0,-1.5) -- (0,3) node (yaxis) [above] {$y$};
	\draw [->,thick,>=stealth'] (-3,0) -- (5,0) node (xaxis) [right] {$x$};
	
	\fill [opacity=.5,thick] (2,0) circle (1.5pt) node[anchor=north east] {\small$(1,0)$};
	\fill [opacity=.5,thick] (0,2) circle (1.5pt) node[anchor=north east] {\small$(0,1)$};
	
	\draw [opacity=.9, kwcolor, thick] (4.2,2.4) coordinate (l1) -- (-2.52,-1.44) coordinate (l2) node [above] (line) {$\ell$};
	
	\fill [opacity=.9,thick] (0,0) circle (1.5pt) node[anchor=south east] {\small$(0,0)$};
	
	\draw [opacity=.5, dashed, thick] (4.75,2) coordinate (v1) -- (-3,2) coordinate (v2) node [left] (v-line) {};
	\draw [opacity=.5, dashed, thick] (2,2.75) coordinate (u1) -- (2,-1.5) coordinate (u2) node [below] (u-line) {};
	
	\coordinate (lv) at (intersection of v1--v2 and l1--l2);
	\coordinate (lu) at (intersection of u1--u2 and l1--l2);
	
	\fill[opacity=.9,kwcolor] (lv) circle (1.5pt) node[anchor=north west] {\small$(v,1)$};
	\fill[opacity=.9,kwcolor] (lu) circle (1.5pt) node[anchor=south east] {\small$(1,u)$};
	\end{tikzpicture}
	\caption{Charts on $\mathbb{RP}^1$ can be obtained by intersections with affine hyperplanes $x=1$ and $y=1$.}\label{fig:vec:projective_space}
\end{figure}
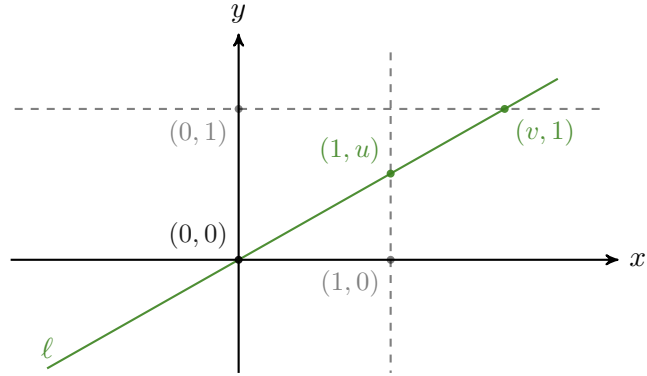
As pictured in Figure~\ref{fig:vec:projective_space}, the set $U$ corresponds to all lines through the origin intersecting the hyperplane $x=1$. Similarly, $V$ consists of all lines intersecting the hyperplane $y=1$. It is readily checked that the following two maps define homeomorphisms:
\begin{align*}
	h_U: U \to \RR  &\quad\text{with}\quad [1:u] \mapsto u, \\
	h_V: V \to \RR  &\quad\text{with}\quad [v:1] \mapsto v.
\end{align*}
The corresponding change of coordinates $$h_V \circ h_U^{-1} : \RR \setminus \{ 0\} \to \RR \setminus \{ 0\}$$ on the intersection domain $W = U \cap V$ can be extracted from Figure~\ref{fig:vec:projective_space} and is determined as $u \mapsto \frac{1}{u}$. In contrast to the homogeneous coordinates above, the ones given by $h_U$ and $h_V$ are called \defstyle{inhomogeneous coordinates}.

Of course, this construction can be generalized to higher dimensions and complex numbers. However, we want to keep things simple for now and apply this setup to \Sage: 
\begin{NBin}
M = Manifold(1, 'RP1', 
			 latex_name=r'\mathbb{RP}^1', 
			 start_index=1,
			 structure='topological')
U = M.open_subset('U'); hu.<u> = U.chart()
V = M.open_subset('V'); hv.<v> = V.chart()
M.declare_union(U, V)
\end{NBin}
The change of coordinates, as discussed above, is given by:
\begin{NBin}
u_to_v = hu.transition_map(hv, 1/u, 
						   intersection_name='W',
						   restrictions1= u!=0,
						   restrictions2= v!=0)
u_to_v.display()
\end{NBin}
\begin{NBoutM}
$\begin{array}{lcl} v & = & \frac{1}{u} \end{array}$
\end{NBoutM}
\begin{NBin}
W = U.intersection(V)
v_to_u = u_to_v.inverse()
v_to_u.display()
\end{NBin}
\begin{NBoutM}
$\begin{array}{lcl} u & = & \frac{1}{v} \end{array}$
\end{NBoutM}
For this example, we investigate the so-called \defstyle{Möbius bundle}
\begin{align*}
E = \left\{ (\ell, v) \in \mathbb{RP}^1 \times \RR^2 ~|~ v \in \ell \cup \{0\} \right\},
\end{align*}
endowed with the projection $(\ell, v) \mapsto \ell$. To obtain a local trivialization on $E$, let $(\ell, v)$ be in $E$ such that $\ell \in U$. Then we can write $\ell = [1:u]$ for some $u \in \RR$, and the vector $(1,u) \in \RR^2$ forms a basis of the 1-dimensional subspace $\ell \cup \{ 0\}$. Since $v$ is a vector in that space, we can find a unique $\lambda_{\ell, v} \in \RR$ such that $v = \lambda_{\ell,v} \, (1,u)$. This gives rise to a local trivialization $\psi_U: \restrict{E}{U} \to U \times \RR$ via $\psi_U(\ell, v) = (\ell, \lambda_{\ell,v})$. The construction of $\psi_V: \restrict{E}{U} \to V \times \RR$ over $V$ is performed analogously. Taking a second look at Figure~\ref{fig:vec:projective_space}, we derive that the transition map $\psi_V \circ \psi^{-1}_U$ is induced by the transition function $W \to \mathrm{GL}(1,\RR)$ via $[1:u] \mapsto (u)$. %
With this, we are now ready to define this vector bundle in \Sage:
\begin{NBin}
E = M.vector_bundle(1, 'E'); print(E)
\end{NBin}
\begin{NBout}
Topological real vector bundle E -> RP1 of rank 1 over the base space 1-dimensional topological manifold RP1
\end{NBout}
Let us state the two trivializations constructed above:
\begin{NBin}
psiU = E.trivialization('psiU', latex_name=r'\psi_U', domain=U)
psiU
\end{NBin}
\begin{NBoutM}
$\psi_U : E |_{U} \to U \times \Bold{R}^1$
\end{NBoutM}
\begin{NBin}
psiV = E.trivialization('psiV', latex_name=r'\psi_V', domain=V)
psiV
\end{NBin}
\begin{NBoutM}
$\psi_V : E |_{V} \to V \times \Bold{R}^1$
\end{NBoutM}
Next we declare the transition map between $\psi_U$ and $\psi_V$:
\begin{NBin}
transf = psiU.transition_map(psiV, [[u]]); transf
\end{NBin}
\begin{NBoutM}
$\psi_V\circ \psi_U^{-1}:W\times \Bold{R}^1 \to W\times \Bold{R}^1$
\end{NBoutM}
Each trivialization induces a local frame which we can get by the following command:
\begin{NBin}
eU = psiU.frame(); eU
\end{NBin}
\begin{NBoutM}
$\left(E|_{U}, \left(\left(\psi_U^* e_{ 1 }\right)\right)\right)$
\end{NBoutM}
The notation $\left(\psi_U^* e_{ 1 }\right)$ stands for the local section on $U$ given by $p \mapsto \psi^{-1}_U(p,e_1)$, where $e_1$ is the standard basis of $\RR$. Similarly, we have for $\psi_V$:
\begin{NBin}
eV = psiV.frame(); eV
\end{NBin}
\begin{NBoutM}
$\left(E|_{V}, \left(\left(\psi_V^* e_{ 1 }\right)\right)\right)$
\end{NBoutM}
The corresponding bundle automorphism $\psi_U^{-1} \circ \psi_V:\restrict{E}{W} \to \restrict{E}{W}$ translating $\left(\psi_V^* e_{ 1 }\right)$ into $\left(\psi_U^* e_{ 1 }\right)$ can be easily returned:
\begin{NBin}
transf.automorphism()
\end{NBin}
\begin{NBoutM}
$\psi_U^{-1} \circ \psi_V$
\end{NBoutM}
As discussed in Section~\ref{sec:vec:impl}, this is an instance of \texttt{FreeModuleAutomorphism}:
\begin{NBin}
from sage.tensor.modules.free_module_automorphism \
											import FreeModuleAutomorphism
isinstance(transf.automorphism(), FreeModuleAutomorphism)
\end{NBin}
\begin{NBoutM}
$\mathrm{True}$
\end{NBoutM}
We can even get its determinant which is a scalar field on the intersection $W = U \cap V$:
\begin{NBin}
transf.det().display()
\end{NBin}
\begin{NBoutM}
$\begin{array}{llcl} \det(\psi_U^{-1} \circ \psi_V):& W & \longrightarrow & \mathbb{R} \\ & u & \longmapsto & u \\ & v & \longmapsto & \frac{1}{v} \end{array}$
\end{NBoutM}
We can see that the determinant is negative if $u<0$. It is therefore reasonable to suspect that the vector bundle $E$ is not orientable. This is indeed true as proven in~\cite[pp.~16--17]{milnor:char_classes}. As a consequence, $E$ is not trivial and each global section must vanish somewhere. To illustrate this, we define the corresponding section module over $\mathbb{RP}^1$:
\begin{NBin}
C0 = E.section_module(); C0
\end{NBin}
\begin{NBoutM}
$C^{0}(\mathbb{RP}^1;E)$
\end{NBoutM}
We can see that \Sage rejects $C^{0}(\mathbb{RP}^1;E)$ as a free module:
\begin{NBin}
from sage.tensor.modules.finite_rank_free_module import FiniteRankFreeModule
isinstance(C0, FiniteRankFreeModule)
\end{NBin}
\begin{NBoutM}
$\mathrm{False}$
\end{NBoutM}
This is because there is no global frame that \Sage knows about:
\begin{NBin}
E.is_manifestly_trivial()
\end{NBin}
\begin{NBoutM}
$\mathrm{False}$
\end{NBoutM}
On the contrary, the section module over $U$ must be free:
\begin{NBin}
C0U = E.section_module(domain=U); C0U
\end{NBin}
\begin{NBoutM}
$C^0(U;E)$
\end{NBoutM}
And indeed, it is:
\begin{NBin}
from sage.tensor.modules.finite_rank_free_module import FiniteRankFreeModule
isinstance(C0U, FiniteRankFreeModule)
\end{NBin}
\begin{NBoutM}
$\mathrm{True}$
\end{NBoutM}
We start with some concrete computations and therefore define a section on $U$:
\begin{NBin}
sU = E.section(name='sigma', latex_name=r'\sigma', domain=U)
sU[eU,1] = (1-u)/(1+u^2)
sU.display()
\end{NBin}
\begin{NBoutM}
$\sigma = \left( -\frac{u - 1}{u^{2} + 1} \right) \left(\psi_U^* e_{ 1 }\right)$
\end{NBoutM}
This local section lives in the free module $C^0(U;E)$:
\begin{NBin}
sU in C0U
\end{NBin}
\begin{NBoutM}
$\mathrm{True}$
\end{NBoutM}
We can perform a change of frame on the subset $W$:
\begin{NBin}
sU.display(eV.restrict(W), hv.restrict(W))
\end{NBin}
\begin{NBoutM}
$\sigma = \left( \frac{v - 1}{v^{2} + 1} \right) \left(\psi_V^* e_{ 1 }\right)$
\end{NBoutM}
This expression is obviously well-defined on the whole subset $V$. Hence, we can extend this section continuously onto $\mathbb{RP}^1$:
\begin{NBin}
s = E.section(name='sigma', latex_name=r'\sigma')
s.set_restriction(sU)
s.add_comp_by_continuation(eV, W)
s.display(eV)
\end{NBin}
\begin{NBoutM}
$\sigma = \left( \frac{v - 1}{v^{2} + 1} \right) \left(\psi_V^* e_{ 1 }\right)$
\end{NBoutM}
The corresponding continuation is indeed an element of $C^{0}(\mathbb{RP}^1;E)$:
\begin{NBin}
s in C0
\end{NBin}
\begin{NBoutM}
$\mathrm{True}$
\end{NBoutM}
Let us define another global section in $E$:
\begin{NBin}
t = E.section(name='tau', latex_name=r'\tau')
t[eV,1] = (3-v^2)/(1+v^4)
t.add_comp_by_continuation(eU, W)
t.display(eU)
\end{NBin}
\begin{NBoutM}
$\tau = \left( \frac{3 \, u^{3} - u}{u^{4} + 1} \right) \left(\psi_U^* e_{ 1 }\right)$
\end{NBoutM}
Now, $\sigma$ and $\tau$ can be added pointwise:
\begin{NBin}
r = (s + t); r.display(eU)
\end{NBin}
\begin{NBoutM}
$\sigma+\tau = \left( \frac{2 \, u^{5} + u^{4} + 2 \, u^{3} - 2 \, u + 1}{u^{6} + u^{4} + u^{2} + 1} \right) \left(\psi_U^* e_{ 1 }\right)$
\end{NBoutM}
\begin{NBin}
r.display(eV)
\end{NBin}
\begin{NBoutM}
$\sigma+\tau = \left( \frac{v^{5} - 2 \, v^{4} + 2 \, v^{2} + v + 2}{v^{6} + v^{4} + v^{2} + 1} \right) \left(\psi_V^* e_{ 1 }\right)$
\end{NBoutM}
Since $\sigma + \tau$ is again a well-defined continuous section on $E$, it must vanish at some point. We want to check this by solving an equation:
\begin{NBin}
sol = solve(r[eU,1,hu].expr() == 0, u, solution_dict=True)
sol
\end{NBin}
\begin{NBoutM}
$\left[\left\{u : -1\right\}\right]$
\end{NBoutM}
Let us investigate what happens at this particular point $p\in \mathbb{RP}^1$ determined by $u=-1$:
\begin{NBin}
p = M.point([sol[0][u]], name='p', chart=hu); print(p)
\end{NBin}
\begin{NBout}
Point p on the 1-dimensional topological manifold RP1
\end{NBout}
The corresponding section $\sigma$ evaluated at $p$ is an element of the fiber $E_p$:
\begin{NBin}
print(s.at(p))
\end{NBin}
\begin{NBout}
Vector sigma in the fiber of E at Point p on the 1-dimensional topological manifold RP1
\end{NBout}
Concretely, we have: 
\begin{NBin}
s.at(p).display(basis=eU.at(p))
\end{NBin}
\begin{NBoutM}
$\sigma = \left(\psi_U^* e_{ 1 }\right)$
\end{NBoutM}
For $\tau$ we similarly obtain:
\begin{NBin}
t.at(p).display(basis=eU.at(p))
\end{NBin}
\begin{NBoutM}
$\tau = - \left(\psi_U^* e_{ 1 }\right)$
\end{NBoutM}
As expected, the sum vanishes at $p$:
\begin{NBin}
r.at(p).display(basis=eU.at(p))
\end{NBin}
\begin{NBoutM}
$\sigma+\tau = 0$
\end{NBoutM}

\subsection{Example: Tensor Bundles over $\Sphere^2$}
In this section, we want to highlight the main features of tensor bundles. The 2-sphere $\Sphere^2 \subset \RR^3$ serves as a suitable example for this. We define $U \subset \Sphere^2$ to be the complement of the meridian lying in the upper $x$-$z$-plane for $x\geq 0$. Similarly, $V \subset \Sphere^2$ defines the complement of the meridian going through the $x$-$y$-plane for $x \leq 0$. Both sets cover $\Sphere^2$ and are open in the relative topology. In the language of \Sage, we write:
\begin{NBin}
M = Manifold(2, name='S^2', latex_name=r'\mathbb{S}^2')
U = M.open_subset('U'); V = M.open_subset('V')
M.declare_union(U,V) # M is the union of U and V
\end{NBin}
The corresponding tangent bundle can be returned by the following command:
\begin{NBin}
TM = M.tangent_bundle(); TM
\end{NBin}
\begin{NBoutM}
$T\mathbb{S}^2\to \mathbb{S}^2$
\end{NBoutM}
For now, however, we are interested in the parallelizable subset $U \subset \Sphere^2$:
\begin{NBin}
TU = U.tangent_bundle(); print(TU)
\end{NBin}
\begin{NBout}
Tangent bundle TU over the Open subset U of the 2-dimensional differentiable manifold S^2
\end{NBout}
Each chart on a manifold gives rise to a trivialization on the corresponding tangent bundle. The converse, however, is false as the example in Section~\ref{sec:char:berger} indicates. Nevertheless, trivializations entirely fall back on the class \texttt{DiffChart} at this stage. The reason is to allow the preexisting implementation to do the whole work and facilitate applications.\footnote{If one still desires frames which are not induced by charts, this can be achieved by using the method \texttt{local\_frame} or \texttt{vector\_frame} respectively.} To demonstrate how it is done, we introduce spherical coordinates on the subset $U$:
\begin{NBin}
c_spher.<th,ph> = TU.trivialization(r'th:(0,pi):\theta ph:(0,2*pi):\phi')
\end{NBin}
To demonstrate the pullback of tensor bundles, we define the Euclidean space $\RR^3$ and introduce a differential map $\varphi: \Sphere^2 \to \RR^3$ given by the embedding of $\Sphere^2$ into $\RR^3$:
\begin{NBin}
R = Manifold(3, 'R^3', r'\mathbb{R}^3')
c_cart.<x,y,z> = R.chart()  # Cartesian coord. on R^3
phi = U.diff_map(R, (sin(th)*cos(ph), sin(th)*sin(ph), cos(th)),
				 name='phi', latex_name=r'\varphi'); print(phi)
\end{NBin}
\begin{NBout}
Differentiable map phi from the Open subset U of the 2-dimensional differentiable manifold S^2 to the 3-dimensional differentiable manifold R^3
\end{NBout}
Let us fix the point $p$ in $U \subset \Sphere^2$ determined by $(\frac{\pi}{2},\pi)$ in spherical coordinates:
\begin{NBin}
p = U.point((pi/2, pi), name='p'); print(p)	
\end{NBin}
\begin{NBout}
Point p on the 2-dimensional differentiable manifold S^2
\end{NBout}
We can evaluate $\varphi$ at this point $p$: 
\begin{NBin}
phi(p).coord(c_cart)
\end{NBin}
\begin{NBoutM}
$\left(-1, 0, 0\right)$
\end{NBoutM}
We get the corresponding pullback tensor bundles by stating $\varphi$ as the destination map:
\begin{NBin}
phiT11U = U.tensor_bundle(1,1, dest_map=phi); phiT11U
\end{NBin}
\begin{NBoutM}
$\varphi^* T^{(1,1)}\mathbb{R}^3\to U$
\end{NBoutM}
More precisely:
\begin{NBin}
print(phiT11U)
\end{NBin}
\begin{NBout}
Tensor bundle phi^*T^(1,1)R^3 over the Open subset U of the 2-dimensional differentiable manifold S^2 along the Differentiable map phi from the Open subset U of the 2-dimensional differentiable manifold S^2 to the 3-dimensional differentiable manifold R^3	
\end{NBout}
We see that sections completely fall back on the preexisting implementation of tensor fields:
\begin{NBin}
phiT11U.section_module() is U.tensor_field_module((1,1), dest_map=phi)
\end{NBin}
\begin{NBoutM}
$\mathrm{True}$
\end{NBoutM}
The fiber at $p$ is given by the space of $(1,1)$-tensors of the tangent space over $\RR^3$ at~$\varphi(p)$:
\begin{NBin}
phiT11U.fiber(p)
\end{NBin}
\begin{NBoutM}
$T^{(1, 1)}\left(T_{\varphi\left(p\right)}\,\mathbb{R}^3\right)$
\end{NBoutM}
Since $\RR^3$ is parallelizable, the pullback tensor bundle $\varphi^* T^{(1,1)}\mathbb{R}^3\to U$ must be trivial:
\begin{NBin}
phiT11U.is_manifestly_trivial()
\end{NBin}
\begin{NBoutM}
$\mathrm{True}$
\end{NBoutM}
Hence, it comes with a frame naturally induced by the pullback:
\begin{NBin}
phiT11U.frames()
\end{NBin}
\begin{NBoutM}
$\left[\left(U, \left(\frac{\partial}{\partial x },\frac{\partial}{\partial y },\frac{\partial}{\partial z }\right)\right)\right]$
\end{NBoutM}
Strictly speaking, this is a frame of $\varphi^* T\mathbb{R}^3$ rather than $\varphi^* T^{(1,1)}\mathbb{R}^3$. However, remember that all frames in the tensor bundle can be retrieved from frames in the tangent bundle. Thus, there is no loss of generality here. We can extract our frame by applying index operations on the returned list:
\begin{NBin}
print(phiT11U.frames()[0])
\end{NBin}
\begin{NBout}
Vector frame (U, (d/dx,d/dy,d/dz)) with values on the 3-dimensional differentiable manifold R^3
\end{NBout}

\section{Mixed Differential Forms}\label{ch:mixed}

\subsection{Mathematical Preliminaries}
Differential forms are a powerful tool in both geometry and physics. Intuitively, they correspond to sections of infinitesimal areas at each point on a manifold. We briefly recall some definitions and relations, especially with regards to characteristic classes. We begin with a definition.
\begin{mydef}\label{def:mixed:valued_forms}
	Suppose that $M$ and $N$ are smooth manifolds with $\dim(N)=n$. Let $E \to M$ be a smooth vector bundle over the field $\KK = \RR,\CC$. Moreover, assume that $\varphi : M \to N$ is a smooth map. For an open subset $U \subset M$ and $k \in \NN$, we define the following space of $C^\infty$-sections
	\begin{align*}
		\Omega^k(U, \varphi \,; E) := C^\infty\!\left( U \;;\; E \otimes_{\RR} {\bigwedge}^k ( \varphi^* T^{\,*}N ) \right),
	\end{align*}
	and call its elements \defstyle{$\boldsymbol k$-forms on $\boldsymbol U$ along $\boldsymbol\varphi$ with values in $\boldsymbol E$}.\footnote{Note that the additional vector bundle $E$ is a suitable generalization to allow more general coefficients. It is convenient, for example, with respect to \hyperref[par:vec:bundle_connections]{bundle connections}, see~Def.~\ref{def:vec:bundle_connection}.}
\end{mydef}
This space is an infinite dimensional vector space over $\mathbb{K}$ and a module over $C^\infty(U,\mathbb{K})$. As soon as $k$ exceeds $n$, we obviously obtain zero. The standard case of differential forms on $M$ is dedicated to $U=M=N$ with $\varphi=\id_M$ and the trivial bundle $E=\RR \times M$. Similarly, \emph{complex} differential forms on $M$ are given by $E=\CC \times M$ instead. For the sake of convenience, we agree on the following abbreviations:
\begin{align*}
	\Omega^k(U, \varphi \,; \KK) &:= \Omega^k(U, \varphi \,; \KK \times M) \\
	\Omega^k(U; E) &:= \Omega^k(U, \id_M \,; E).
\end{align*}

\paragraph{The Algebra of Mixed Forms.}
In this paragraph and what follows, we reduce $E$ to the trivial $\KK$-line bundle. With regards to characteristic classes, we need to perform algebraic operations on forms. Therefore, we define the space of \defstyle{mixed differential forms on $\boldsymbol U$ along $\boldsymbol\varphi$} by taking the following direct sum:
\begin{align*}
	\Omega^*(U, \varphi\,;\KK):=\bigoplus^n_{k=0} \Omega^k(U, \varphi \,; \KK).
\end{align*}
Let us make some observations. On the one hand, the space $\Omega^*(U, \varphi\,;\KK)$ is naturally endowed with an associative multiplication induced by the wedge product via bilinear extension:
\begin{align*}
	\wedge : \Omega^*(U, \varphi\,;\KK) \times \Omega^*(U, \varphi\,;\KK) \to \Omega^*(U, \varphi\,;\KK).
\end{align*}
This multiplication gives $\Omega^*(U, \varphi)$ the structure of a \emph{graded algebra}, i.\,e.
\begin{align*}
	\Omega^k(U, \varphi\,;\KK) \wedge \Omega^l(U, \varphi\,;\KK) \subset \Omega^{k+l}(U, \varphi\,;\KK).
\end{align*}
Observe that $\wedge$ is in general neither commutative nor anticommutative. Nonetheless, if one restricts to differential forms of even degree, the multiplication becomes commutative. Similarly, it gets anticommutative when one considers odd degrees.

On the other hand, the space $\Omega^*(U, \varphi\,;\KK)$ is equipped with an additional structure. More precisely, the \emph{exterior derivative} as a linear mapping
\begin{align*}
	\mathrm{d}_{k+1} : \Omega^k(U, \varphi\,;\KK) \to \Omega^{k+1}(U, \varphi\,;\KK)
\end{align*}
satisfies $\mathrm{d}_{k+1} \circ \mathrm{d}_k = 0$ and hence delivers the structure of a \emph{cochain complex}, namely
\begin{align*}
0 \xrightarrow{\diffd_0} \Omega^0(U, \varphi\,;\KK) \xrightarrow{\diffd_1} \Omega^1(U, \varphi\,;\KK) \xrightarrow{\diffd_2} \dots \xrightarrow{\diffd_n} \Omega^n(U, \varphi\,;\KK)  \xrightarrow{\diffd_{n+1}} 0.
\end{align*}
This naturally induces a $\KK$-linear map on the entire space:
\begin{align*}
\mathrm{d} : \Omega^*(U, \varphi\,;\KK) \to \Omega^*(U, \varphi\,;\KK).
\end{align*}
Differential forms in $\im(\diffd)$ are called \defstyle{exact}, while forms in $\ker(\diffd)$ are called~\defstyle{closed}.

\paragraph{De Rham Cohomology.}
In the course of this paragraph, we restrict our previous definitions to the smooth case $U=M=N$ with $\varphi = \id_M$; but still with a possible complexification in mind. The famous Poincaré lemma states that exact and closed differential forms always coincide on domains diffeomorphic to an open ball. Conversely, counterexamples indicate that the reason for failure are \enquote{holes} in the domain. This observation gives rise to geometric invariants on smooth manifolds. More precisely, we take closed forms and quotient out exact forms. The resulting space
\begin{align*}
	H_{\mathrm{dR}}^k(M;\KK) := \faktor{\ker(\diffd_{k+1})}{\im(\diffd_{k})}
\end{align*}
is called \defstyle{$\boldsymbol k$-th de Rham cohomology of $\boldsymbol M$ with coefficients in $\boldsymbol\KK$}. On top of that, we conveniently denote:
\begin{align*}
	H_{\mathrm{dR}}^*(M;\KK) := \bigoplus^n_{k=0} H_{\mathrm{dR}}^k(M;\KK).
\end{align*}
The space $H_{\mathrm{dR}}^*(M;\KK)$ inherits the structure of a graded algebra and cochain complex from $\Omega^*(M ;\KK)$ by construction. Moreover, the sequence $$\ldots \xrightarrow{\diffd_{k}} H_{\mathrm{dR}}^k(M;\KK) \xrightarrow{\diffd_{k+1}} H_{\mathrm{dR}}^{k+1}(M;\KK) \xrightarrow{d_{k+2}} \ldots$$ is \emph{exact}, i.\,e.\ $\ker(d_{k+1}) = \im(d_{k})$. One can similarly define more general cohomologies with coefficients in generic rings. For a detailed introduction into algebraic topology, and homology theory in particular, consult \cite{baer:algtop, hatcher:algebraic_topology}.

The de Rham cohomology yields a geometric invariant in the following sense: if $\psi : M \to N$ is a smooth map into another manifold $N$, the pullback $$\psi^* : \Omega^k(N;\KK) \to \Omega^k(M;\KK)$$ descends to a linear mapping $H_{\mathrm{dR}}^k(N;\KK) \to H_{\mathrm{dR}}^k(M;\KK)$, which is also denoted by $\psi^*$. Suppose $\psi$ is a diffeomorphism then $\psi^*$ boils down to an isomorphism on cohomology level. A full discussion on that topic can be found in \cite[Ch.~17]{lee:smooth_manifolds}.

\subsection{SageMath Implementation}\label{sec:mixed:impl}
Differential forms on manifolds came along with the \texttt{SageManifolds} project and hence are already supported in \Sage.\footnote{There, $\KK$ always equals the base field of the manifold, particularly $\KK = \RR$ for real manifolds. However, since symbolic expressions are used for local coordinates, there is no genuine restriction to~$\RR$.} A mixed differential form can simply be represented by an element-typed object storing differential forms of different degrees and provided with the structures discussed in the preceding section. More precisely, a mixed form is represented by an instance of \texttt{MixedForm} inheriting from \texttt{AlgebraElement}. The overlying algebra is an instance of the parent typed class \texttt{MixedFormAlgebra} which inherits from \texttt{Parent} and \texttt{UniqueRepresentation}, and belongs to the category \texttt{GradedAlgebras} over the symbolic ring. It is uniquely determined by the vector field module, which in turn is specified by $U$ and the destination map $\restrict{\varphi}{U}: U \to N$.

As the inheritances suggest, mixed forms are fully integrated in the parent-element-type pattern of \Sage. %
For example, the most canonical coercions are supported:
\begin{align*}
	\Omega^k(U, \varphi\,;\KK) &\xrightarrow[\phantom{\mathrm{restriction}}]{\mathrm{inclusion}} \Omega^*(U, \varphi\,;\KK), \\
	\Omega^*(M, \varphi\,;\KK) &\xrightarrow{\mathrm{restriction}} \Omega^*(U, \varphi\,;\KK).
\end{align*}
To ensure full compatibility, the class \texttt{MixedForm} is provided with all important methods which are already specified for differential forms. This includes, for example, \texttt{restrict}, \texttt{add\_comp\_by\_continuation} and \texttt{set\_restriction}. We refer to the \Sage reference manual~\cite{sagemath:reference:manif} for a complete list.

\begin{figure}[t]
	\centering
	\tikzstyle{arrow} = [thick,->,>=stealth]
	\tikzstyle{factory} = [rectangle, rounded corners, minimum width=2.2cm, minimum height=1cm,text centered, draw=black, fill=strcolor!30, thin]
	\tikzstyle{obj} = [rectangle, rounded corners, minimum width=2.2cm, minimum height=1cm,text centered, draw=black, fill=gray!50, thin]
	\pgfdeclarelayer{background}
	\pgfdeclarelayer{foreground}
	\pgfsetlayers{background,main,foreground}
	\begin{tikzpicture}[node distance=3.25cm, >=latex', thick]
	\begin{pgfonlayer}{foreground}
	\node (scal-py) [obj, align=center] {\scriptsize Scalar field: \\ \scriptsize $f: U \to \KK$};
	\node (oneform-py) [obj, right of=scal-py, align=center] {\scriptsize 1-form: \\ \scriptsize $\omega_1 \in \Omega^1(U,\varphi\,;\KK)$};
	\node (other-py) [right of=oneform-py, align=center,xshift=-.8cm] {\large \ldots};
	\node (nform-py) [obj, right of=other-py, align=center,xshift=-.8cm] {\scriptsize $n$-form: \\ \scriptsize $\omega_n \in \Omega^n(U,\varphi\,;\KK)$};
	
	\node (scal) [obj, below of=scal-py, align=center,yshift=.75cm] {$f$};
	\node (oneform) [obj, below of=oneform-py, align=center,yshift=.75cm] {$\omega_1$};
	\node (other) [below of=other-py, align=center,yshift=.75cm] {\large \ldots};
	\node (nform) [obj, below of=nform-py, align=center,yshift=.75cm] {$\omega_n$};
	
	\node (A0) [above of=scal-py,align=center,yshift=-2.4cm] {\texttt{A[0]}};
	\node (A1) [above of=oneform-py,align=center,yshift=-2.4cm] {\texttt{A[1]}};
	\node (An) [above of=nform-py,align=center,yshift=-2.4cm] {\texttt{A[n]}};

	\node at ($(scal-py.east)!0.5!(oneform-py.west)$) [align=center,yshift=-.25cm] {\large ,};
	\node at ($(oneform-py.east)!0.5!(other-py.west)$) [align=center,yshift=-.25cm] {\large ,};
	\node at ($(other-py.east)!0.5!(nform-py.west)$) [align=center,yshift=-.25cm] {\large ,};
	
	\draw [arrow,opacity=.8] (scal) -- (scal-py) node[right,midway,yshift=-.47cm] {coerce};
	\draw [arrow,opacity=.8] (oneform) -- (oneform-py) node[right,midway,yshift=-.47cm] {coerce};
	\draw [arrow,opacity=.8] (nform) -- (nform-py) node[right,midway,yshift=-.47cm] {coerce};
	\end{pgfonlayer}
	\begin{pgfonlayer}{main}
	\tikzset{list/.style={draw,dashed,gray,rounded corners,fill=kwcolor!50!gray!50!,inner sep=10pt}}
	\node[list,fit=(scal-py) (nform-py) (An), yshift=-.1cm] (list) {};
	\end{pgfonlayer}
	\begin{pgfonlayer}{foreground}
	\node (mixed-label)[above of=list,align=center,yshift=-1.4cm] {\bfseries Mixed form $\boldsymbol{A}$ in $\boldsymbol{\Omega^*(U, \varphi\,;\KK)}$:};
	\end{pgfonlayer}
	\begin{pgfonlayer}{background}
	\tikzset{mixed/.style={draw,black,rounded corners,fill=strcolor!30,inner sep=12pt}}
	\node[mixed,fit=(list) (mixed-label) (list)] {};
	\end{pgfonlayer}
	
	\end{tikzpicture}
	\caption{A mixed form $A$ with homogeneous components $(f, \omega_1, \ldots, \omega_n)$ and its representation \texttt{A} in \Sage. Before $f$ is assigned to the entry \texttt{A[0]}, $\omega_1$ to \texttt{A[1]} and so forth, each homogeneous component is coerced into a proper differential form.}\label{fig:mixed:mixed_form}
\end{figure}
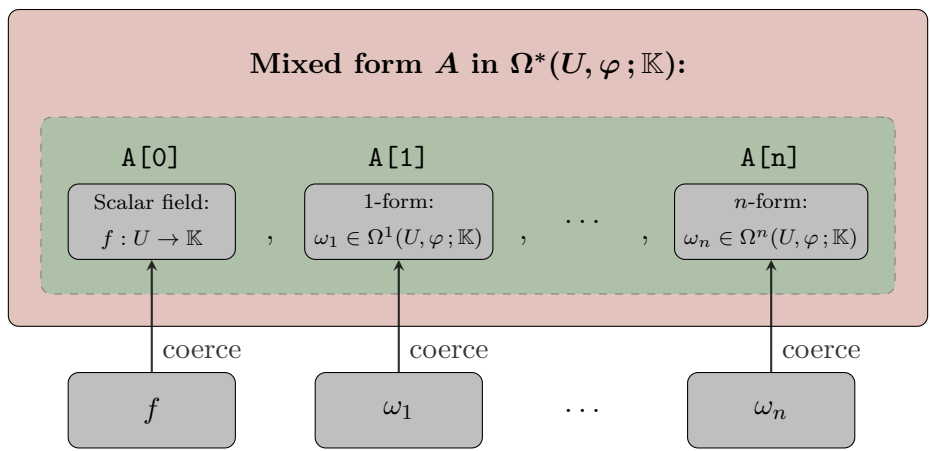

In order to realize mixed forms living in $\Omega^*(U, \varphi\,;\KK)$ appropriately, a \Python list of length $n+1$ is attached to each instance of \texttt{MixedForm}, containing differential forms of different degree in each entry. The entries can be accessed by using index operations on the object itself. Thereby, the $k$-th index corresponds to the $k$-th homogeneous component. Before a differential form is assigned this way, it gets coerced into a suitable \Sage element. This is usually an instance of \texttt{DiffForm}, or \texttt{DiffFormParal} if $U$ is parallelizable. In case of degree zero, however, a generic $0$-form is represented by an instance of \texttt{DiffScalarField}. An illustrated example is provided in Figure~\ref{fig:mixed:mixed_form}. Mathematical operations such as additions, multiplications and exterior derivatives are then performed degree wise.

\subsection{Examples}\label{sec:mixed:examples}

In this section, we briefly demonstrate the capabilities of our implementation. We start by declaring the manifold $M=\RR^2$:
\begin{NBin}
M = Manifold(2, 'R^2', latex_name=r'\mathbb{R}^2')
X.<x,y> = M.chart()
\end{NBin}
Next we define the corresponding spaces of differential forms:
\begin{NBin}
Omega0 = M.diff_form_module(0); print(Omega0)
Omega1 = M.diff_form_module(1); print(Omega1)
Omega2 = M.diff_form_module(2); print(Omega2)
\end{NBin}
\begin{NBout}
Algebra of differentiable scalar fields on the 2-dimensional differentiable manifold R^2
Free module Omega^1(R^2) of 1-forms on the 2-dimensional differentiable manifold R^2
Free module Omega^2(R^2) of 2-forms on the 2-dimensional differentiable manifold R^2
\end{NBout}
The algebra of mixed forms is returned by a simple command:
\begin{NBin}
Omega = M.mixed_form_algebra(); print(Omega)
\end{NBin}
\begin{NBout}
Graded algebra Omega^*(R^2) of mixed differential forms on the 2-dimensional differentiable manifold R^2
\end{NBout}
As aforementioned in the previous section, it belongs to the category of graded algebras over the symbolic ring:
\begin{NBin}
print(Omega.category())
\end{NBin}
\begin{NBout}
Category of graded algebras over Symbolic Ring
\end{NBout}
Before proceeding with mixed forms, let us first declare some differential forms:
\begin{NBin}
f = M.scalar_field(name='f')
omega1 = M.diff_form(1, name='omega_1', latex_name=r'\omega_1')
omega2 = M.diff_form(2, name='omega_2', latex_name=r'\omega_2')
eta = M.diff_form(1, name='eta', latex_name=r'\eta')
\end{NBin}
In the next step, we provide some expressions in local coordinates:
\begin{NBin}
f.set_expr(x^2)
omega1[:] = y, 2*x
omega2[0,1] = 4*x^3
eta[:] = x, y
\end{NBin}
\begin{NBin}
f.display()
\end{NBin}
\begin{NBoutM}
$\begin{array}{llcl} f:& \mathbb{R}^2 & \longrightarrow & \mathbb{R} \\ & \left(x, y\right) & \longmapsto & x^{2} \end{array}$
\end{NBoutM}
\begin{NBin}
omega1.display()
\end{NBin}
\begin{NBoutM}
$\omega_1 = y \mathrm{d} x + 2 \, x \mathrm{d} y$
\end{NBoutM}
\begin{NBin}
omega2.display()
\end{NBin}
\begin{NBoutM}
$\omega_2 = 4 \, x^{3} \mathrm{d} x\wedge \mathrm{d} y$
\end{NBoutM}
\begin{NBin}
eta.display()
\end{NBin}
\begin{NBoutM}
$\eta = x \mathrm{d} x + y \mathrm{d} y$
\end{NBoutM}
The category framework of \Sage captures the entire setup:
\begin{NBin}
all([f in Omega0,
	 f in Omega,
	 omega1 in Omega1,
	 omega1 in Omega,
	 omega2 in Omega2, 
	 omega2 in Omega])
\end{NBin}
\begin{NBoutM}
$\mathrm{True}$
\end{NBoutM}
Now, let us define a mixed form:
\begin{NBin}
A = M.mixed_form(name='A'); print(A)
\end{NBin}
\begin{NBout}
Mixed differential form A on the 2-dimensional differentiable manifold R^2
\end{NBout}
It shall consist of the differential forms $f, \omega_1, \omega_2$. As mentioned above, the forms are assigned by using index operations:
\begin{NBin}
A[:] = [f,omega1,omega2]; A.display()
\end{NBin}
\begin{NBoutM}
$A = f + \omega_1 + \omega_2$
\end{NBoutM}
\begin{NBin}
A.display_expansion()
\end{NBin}
\begin{NBoutM}
$A = \left[ x^{2} \right]_0 + \left[ y \mathrm{d} x + 2 \, x \mathrm{d} y \right]_1 + \left[ 4 \, x^{3} \mathrm{d} x\wedge \mathrm{d} y \right]_2$
\end{NBoutM}
As we can see, the output is sorted by degree. Notice that the forms stored in \texttt{A} are given by the very same instances we declared beforehand:
\begin{NBin}
all([A[0] is f,
	 A[1] is omega1,
	 A[2] is omega2])
\end{NBin}
\begin{NBoutM}
$\mathrm{True}$
\end{NBoutM}
If that behavior is unwanted, a copy can be made that has the very same expressions in local coordinates but has stored entirely new instances:
\begin{NBin}
Aclone = A.copy()
any(Aclone[k] is A[k] for k in Omega.irange())
\end{NBin}
\begin{NBoutM}
$\mathrm{False}$
\end{NBoutM}
\begin{NBin}
all(Aclone[k] == A[k] for k in Omega.irange())
\end{NBin}
\begin{NBoutM}
$\mathrm{True}$
\end{NBoutM}
Let us perform some computations and define another mixed form:
\begin{NBin}
B = M.mixed_form([2,eta,0], name='B'); B.display_expansion()
\end{NBin}
\begin{NBoutM}
$B = \left[ 2 \right]_0 + \left[ x \mathrm{d} x + y \mathrm{d} y \right]_1 + \left[ 0 \right]_2$
\end{NBoutM}
The multiplication is executed degree wise:
\begin{NBin}
all((A * B)[k] == sum(A[j].wedge(B[k - j]) 
	 				  for j in range(k + 1)) 
	for k in Omega.irange())
\end{NBin}
\begin{NBoutM}
$\mathrm{True}$
\end{NBoutM}
\begin{NBin}
(A * B).display_expansion()
\end{NBin}
\begin{NBoutM}
$A\wedge B = \left[ 2 \, x^{2} \right]_0 + \left[ \left( x^{3} + 2 \, y \right) \mathrm{d} x + \left( x^{2} y + 4 \, x \right) \mathrm{d} y \right]_1 + \left[ \left( 8 \, x^{3} - 2 \, x^{2} + y^{2} \right) \mathrm{d} x\wedge \mathrm{d} y \right]_2$
\end{NBoutM}
This particular example is also convenient to demonstrate that the multiplication given by the wedge product is in general neither commutative nor anticommutative:
\begin{NBin}
(B * A).display_expansion()
\end{NBin}
\begin{NBoutM}
$B\wedge A = \left[ 2 \, x^{2} \right]_0 + \left[ \left( x^{3} + 2 \, y \right) \mathrm{d} x + \left( x^{2} y + 4 \, x \right) \mathrm{d} y \right]_1 + \left[ \left( 8 \, x^{3} + 2 \, x^{2} - y^{2} \right) \mathrm{d} x\wedge \mathrm{d} y \right]_2$
\end{NBoutM}
Finally, let us compute the exterior derivative:
\begin{NBin}
dA = A.exterior_derivative(); dA.display_expansion()
\end{NBin}
\begin{NBoutM}
$\mathrm{d}A = \left[ 0 \right]_0 + \left[ 2 \, x \mathrm{d} x \right]_1 + \left[ \mathrm{d} x\wedge \mathrm{d} y \right]_2$
\end{NBoutM}

\section{Characteristic Classes}\label{ch:char_class}

\subsection{Mathematical Preliminaries}\label{sec:char:math}
There are a few equivalent definitions of characteristic classes. One of them is the following, characterized by the so-called naturality condition.
\begin{mydef}\label{def:char_class}
	Let $R$ be a commutative ring, $M$ a topological manifold and $E$ a topological vector bundle over $M$. A \defstyle{characteristic class} $\kappa(E)$ of $E$ is an element in the cohomology ring $H^*(M; R)$ with coefficients in $R$ such that for every continuous map $f: N \to M$, where $N$ is another topological manifold, the \defstyle{naturality condition} holds:
	\begin{align*}
		f^* \kappa(E) = \kappa(f^* E).
	\end{align*}
\end{mydef}
Roughly speaking, a characteristic class measures the vector bundle's \enquote{non-triviality} in a certain way. Typically, there are various methods to obtain characteristic classes. In this thesis, we make use of the so-called \defstyle{Chern--Weil method}, which utilizes connections on differentiable vector bundles to construct characteristic classes in the de Rham cohomology ring with complex coefficients of the underlying differentiable manifold. This seems to be a good approach since the curvature, in a sense, measures the local deviation from flatness. We give a brief review of Chern--Weil theory in this chapter, inspired by \cite{roe:elliptic, baer:charakteristisch}. A full discussion of characteristic classes in a more general setting can be found in~\cite{milnor:char_classes}. In what follows, unless stated otherwise, we consider manifolds and vector bundles to be smooth.

\paragraph{Chern--Weil Theory.} Let us first recall some fundamental definitions and theorems about invariant polynomials and finally introduce the Chern--Weil theory. We start with a definition:
\begin{mydef}
	Let $G$ be a compact Lie group and $\mathfrak{g}$ its Lie algebra. A polynomial $P : \mathfrak{g} \to \CC$ is called \defstyle{invariant} if
	\begin{align*}
		P(X) = P(\mathrm{Ad}_g(X))
	\end{align*}
	holds for every $X \in \mathfrak{g}$ and $g \in G$.
\end{mydef}
A canonical choice in our setting would be $G = \mathrm{GL}(n,\CC)$ and $\mathfrak{g}=\mathfrak{gl}(n,\CC)$ with the adjoint representation $\mathrm{Ad}_g(X) = g Xg^{-1}$. Famous examples of invariant polynomials on $\mathfrak{gl}(n,\CC)$ are the determinant and trace. However, we can consider $G$ being any other Lie group; for example, the group $G=\mathrm{SO}(2n)$ with the Lie algebra $\mathfrak{g}=\mathfrak{so}(2n)$ of skew-symmetric matrices. This special case plays an important role for oriented vector bundles when we discuss Pfaffian classes later in this section.

The following lemma is essential with respect to characteristic classes.
\begin{mylem}\label{lem:elem_sym_pol_gen}
	The ring of invariant polynomials on $\mathfrak{gl}(n,\CC)$ is a polynomial ring generated by the elementary symmetric functions $\sigma_k$ specified by the characteristic polynomial of a complex matrix $X \in \mathfrak{gl}(n,\CC)$ in the indeterminate~$t$:
	\begin{align}
		\det\left(1 + \frac{t X}{2 \pi i}\right) = \sum_{k=0}^{n} \sigma_k(X) \, t^k . \label{eq:elem_sym_pol}
	\end{align}
\end{mylem}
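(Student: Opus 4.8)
The plan is to reduce the statement to the fundamental theorem of symmetric polynomials by restricting invariant polynomials to diagonal matrices. First I would introduce the evaluation map $\rho$ sending an invariant polynomial $P$ on $\mathfrak{gl}(n,\CC)$ to the polynomial $\rho(P)(\lambda_1,\dots,\lambda_n) := P\bigl(\mathrm{diag}(\lambda_1,\dots,\lambda_n)\bigr)$ in $n$ variables. Conjugating a diagonal matrix by a permutation matrix permutes its diagonal entries, so invariance of $P$ forces $\rho(P)$ to lie in the ring $\CC[\lambda_1,\dots,\lambda_n]^{S_n}$ of symmetric polynomials; thus $\rho$ is a ring homomorphism from the invariant polynomials into the symmetric polynomials.

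Next I would show that $\rho$ is injective. Matrices with $n$ distinct eigenvalues are Zariski-dense in $\mathfrak{gl}(n,\CC)$ — their complement lies in the zero set of the discriminant of the characteristic polynomial, a nonzero polynomial in the matrix entries — and each such matrix is conjugate to a diagonal one; hence an invariant polynomial that vanishes on all diagonal matrices vanishes on a dense set, and therefore identically. For the image, I would evaluate \eqref{eq:elem_sym_pol} on $X = \mathrm{diag}(\lambda_1,\dots,\lambda_n)$, obtaining $\det\bigl(1 + \tfrac{tX}{2\pi i}\bigr) = \prod_{j=1}^n \bigl(1 + \tfrac{t\lambda_j}{2\pi i}\bigr)$ and hence $\rho(\sigma_k) = (2\pi i)^{-k} e_k(\lambda_1,\dots,\lambda_n)$, with $e_k$ the $k$-th elementary symmetric polynomial. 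Since the coefficients of the characteristic polynomial are polynomial in the matrix entries and manifestly invariant under conjugation, each $\sigma_k$ is indeed an invariant polynomial; and by the fundamental theorem of symmetric polynomials the $e_k$ generate $\CC[\lambda_1,\dots,\lambda_n]^{S_n}$, so $\rho$ is surjective. Consequently $\rho$ is an isomorphism of rings carrying $\sigma_k$ to $(2\pi i)^{-k} e_k$.

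It then remains to transport the structure back. Given an invariant polynomial $P$, write $\rho(P) = F(e_1,\dots,e_n)$ for a unique polynomial $F$; applying $\rho^{-1}$ and using $\rho(\sigma_k) = (2\pi i)^{-k} e_k$ expresses $P$ as a polynomial in $\sigma_1,\dots,\sigma_n$, the constants $(2\pi i)^{-k}$ being absorbed into the coefficients. For algebraic independence, a relation $Q(\sigma_1,\dots,\sigma_n)=0$ among invariant polynomials maps under $\rho$ to $Q\bigl((2\pi i)^{-1}e_1,\dots,(2\pi i)^{-n}e_n\bigr)=0$, which forces $Q=0$ since the $e_k$ are algebraically independent. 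Hence the ring of invariant polynomials is the polynomial ring $\CC[\sigma_1,\dots,\sigma_n]$.

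I expect the only genuinely delicate point to be the surjectivity of $\rho$: one must know that an abstract symmetric polynomial in the eigenvalues actually comes from a polynomial function of the matrix entries, and the cleanest route — avoiding any separate extension argument — is exactly the observation above that the $\sigma_k$ of \eqref{eq:elem_sym_pol} already realize the $e_k$ as such functions. The density argument underpinning injectivity is routine once one records that the discriminant of the characteristic polynomial is a nonzero polynomial on $\mathfrak{gl}(n,\CC)$.
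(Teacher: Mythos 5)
Your argument is correct and is essentially the standard proof that the paper defers to (Milnor--Stasheff, Lemma~6, and Roe, Lem.~2.19): restrict to diagonal matrices, use invariance under permutation matrices to land in symmetric polynomials, use Zariski density of matrices with distinct eigenvalues for injectivity, and identify $\rho(\sigma_k)=(2\pi i)^{-k}e_k$ to get surjectivity and algebraic independence via the fundamental theorem of symmetric polynomials. No gaps worth noting; the normalization constants are handled correctly.
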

See \cite[p. 299, Lemma~6]{milnor:char_classes}, \cite[Lem.~2.19]{roe:elliptic} for proofs. We leave this lemma behind for the moment and focus on the fundamental theorem of Chern--Weil theory.
\begin{mythm}
	Let $E$ be a complex vector bundle over a manifold $M$ equipped with a connection $\nabla$, and let $\Omega$ be its curvature form matrix with respect to some frame. Suppose $P : \mathfrak{gl}(n,\CC) \to \CC$ is an invariant polynomial. Then the form $P(\Omega)$ is closed and
	\begin{align*}
		\left[ P(\Omega) \right] \in H^{2*}_{\mathrm{dR}}(M; \CC)
	\end{align*}
	is independent of the choice of connection $\nabla$.
	\label{thm:inv_ind_con}
\end{mythm}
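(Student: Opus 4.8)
The plan is to first reduce to the case that $P$ is homogeneous of some degree $k$ --- a general invariant polynomial is a finite sum of such, and both assertions are linear in $P$ --- and to work with the polarization $\tilde P$, the unique symmetric $k$-linear form on $\mathfrak{gl}(n,\CC)$ with $\tilde P(X,\dots,X)=P(X)$. Then $P(\Omega)$ means $\tilde P(\Omega,\dots,\Omega)$, where each Lie-algebra slot is fed the matrix-valued $2$-form $\Omega$ and matrix multiplication is combined with $\wedge$; because $2$-forms commute under $\wedge$ this is unambiguous, and $P(\Omega)\in\Omega^{2k}(M;\CC)$, so its class does lie in $H^{2*}_{\mathrm{dR}}(M;\CC)$. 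The first point to settle is that $P(\Omega)$ is a globally defined form: under a change of frame $g\colon U\to\mathrm{GL}(n,\CC)$ the curvature transforms as $\Omega\mapsto g\Omega g^{-1}$, so invariance of $P$ --- equivalently of $\tilde P$ under simultaneous conjugation of all of its arguments --- makes the locally defined forms agree on overlaps. I would also record the infinitesimal invariance obtained by differentiating $\tilde P(gX_1g^{-1},\dots,gX_kg^{-1})=\tilde P(X_1,\dots,X_k)$ at $g=\id$ along $g=\exp(tY)$:
\[
  \sum_{i=1}^k \tilde P(X_1,\dots,[Y,X_i],\dots,X_k) = 0 ,
\]
which extends, with the appropriate Koszul signs, to Lie-algebra-valued differential forms in place of $Y,X_1,\dots,X_k$. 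This identity, together with the Bianchi identity, is the key algebraic input for everything that follows.

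For closedness I would compute in a local frame. Differentiating \eqref{eq:vec:curv_from_con} and using $\diffd\circ\diffd=0$ yields the Bianchi identity $\diffd\Omega=\Omega\wedge\omega-\omega\wedge\Omega$, the graded commutator $[\Omega,\omega]$ of the connection matrix $\omega$ and the curvature matrix $\Omega$. Since $\diffd$ is an antiderivation and $\Omega$ has even degree, $\diffd\tilde P(\Omega,\dots,\Omega)=k\,\tilde P(\diffd\Omega,\Omega,\dots,\Omega)$; substituting the Bianchi identity and applying the form version of the infinitesimal invariance identity with $Y$ replaced by the $1$-form $\omega$ makes the right-hand side vanish. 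As $\diffd P(\Omega)=0$ is an assertion about a globally defined form, this local computation suffices, so $[P(\Omega)]\in H^{2*}_{\mathrm{dR}}(M;\CC)$ is well defined.

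For independence of the connection I would use the transgression argument. Given connections $\nabla_0,\nabla_1$ on $E$, the difference $\alpha:=\nabla_1-\nabla_0$ is a global $1$-form with values in $\mathrm{End}(E)$, and $\nabla_t:=\nabla_0+t\alpha$ is a connection for each $t\in[0,1]$, with curvature matrix $\Omega_t$ satisfying $\dot\Omega_t=\diffd^{\nabla_t}\alpha$, the exterior covariant derivative induced by $\nabla_t$. Differentiating $t\mapsto\tilde P(\Omega_t,\dots,\Omega_t)$, inserting the Bianchi identity for $\nabla_t$, and again invoking infinitesimal invariance --- the same manipulation as in the closedness step --- shows
\[
  \frac{\diffd}{\diffd t}\,P(\Omega_t) = \diffd\,\beta_t , \qquad \beta_t := k\,\tilde P(\alpha,\Omega_t,\dots,\Omega_t) .
\]
Integrating over $t\in[0,1]$ gives
\[
  P(\Omega_1) - P(\Omega_0) = \diffd\!\left( k\int_0^1 \tilde P(\alpha,\Omega_t,\dots,\Omega_t)\,\diffd t \right) ,
\]
so $[P(\Omega_1)]=[P(\Omega_0)]$ in $H^{2*}_{\mathrm{dR}}(M;\CC)$.

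The main obstacle I expect is not conceptual but bookkeeping: pinning down the Koszul signs when $\tilde P$ and the infinitesimal invariance identity are extended to matrix-valued forms of mixed degree, and carefully justifying the derivation-type manipulations $\diffd\tilde P(\Omega,\dots)=k\,\tilde P(\diffd\Omega,\Omega,\dots)$ and its $t$-derivative analogue. One also has to keep checking that expressions built from the only locally defined $\omega$ (such as the transgression form) ultimately assemble into globally defined forms, or else phrase each step so that only globally defined quantities enter the conclusions. A cleaner, sign-light alternative for the last step, which I would mention, is to put the connection $\proj^*\nabla_0+t\,\proj^*\alpha$ on the pullback bundle $\proj^*E\to M\times\RR$, apply the already-established closedness there, and observe that the inclusions $M\hookrightarrow M\times\{0\}$ and $M\hookrightarrow M\times\{1\}$ are homotopic and hence induce the same map on de Rham cohomology.
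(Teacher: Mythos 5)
Your argument is correct and is precisely the standard Chern--Weil proof (polarization, invariance under conjugation, Bianchi identity for closedness, and the transgression/homotopy argument for connection independence) that the paper itself does not spell out but delegates to its references (Milnor--Stasheff, B\"ar, Roe). No substantive difference in approach, so nothing further to add.
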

Proofs can be found in \cite[296~ff.]{milnor:char_classes}, \cite[Lem.~1.8, Satz~1.9]{baer:charakteristisch}, \cite[297~ff.]{eguchi:grav}, \cite[Prop.~2.20]{roe:elliptic}. Notice that the form $P(\Omega)$ is a well-defined global form on $M$. This follows immediately from the transformation behavior of $\Omega$ together with the invariant nature of $P$. We can even consider $P$ to be an invariant formal power series. Due to the fact that $\Omega$ is always nilpotent, no convergence questions arise and the same statement still applies. Finally, from Theorem~\ref{thm:inv_ind_con}, we see that $\left[ P(\Omega) \right]$ defines a characteristic class in the sense of Definition~\ref{def:char_class}. This boils down to the fact that pullback operations pass through curvatures and polynomials up to cohomology level.

These are all the ingredients we need for the Chern--Weil method. The recipe manifests as follows: \enquote{Choose a connection, compute its curvature form matrix and apply an invariant formal power series to it}. Before we discuss the implementation details in \Sage, let us introduce some important types of classes constructed from the Chern--Weil method and investigate its properties.

\paragraph{Multiplicative Classes on Complex Bundles.}\label{para:mul_chern_genera}
Let $f$ be a holomorphic function near zero and $E$ be a complex vector bundle of rank $n$ over a manifold $M$. Furthermore, let $\nabla$ be a connection on $E$ and $\Omega$ its curvature form matrix with respect to some frame. We set
\begin{align*}
	\Pi_f(E, \nabla) := \det\left( f\left( \frac{\Omega}{2 \pi i} \right) \right) \in \Omega^{2*}(M; \CC)
\end{align*}
and define the \defstyle{multiplicative characteristic class associated to $\boldsymbol f$}:
\begin{align*}
	 \Pi_f(E) := \left[\Pi_f(E, \nabla)\right] \in H^{2*}_{\mathrm{dR}}(M; \CC).
\end{align*}
The form $\Pi_f(E, \nabla)$ is called \defstyle{characteristic form with respect to $\boldsymbol\nabla$} of the corresponding characteristic class $\Pi_f(E)$.

The most important multiplicative class is the so-called \defstyle{total Chern class} $c(E)$. It is defined as the multiplicative class associated to the function $$f(z) = 1+z.$$ The total Chern class is an element of the cohomology ring with purely real coefficients. This can be seen as follows. We equip $E$ with a Hermitian metric and compatible connection. Thus by choosing an orthonormal frame, the associated curvature form matrix is skew-Hermitian. Finally applying complex conjugation yields the assertion.
Comparison with~\eqref{eq:elem_sym_pol} reveals that the total Chern class breaks up as a sum of classes given by the elementary symmetric functions. The class $c_k(E)$ associated to the $k$-th elementary symmetric function $\sigma_k$ is called \defstyle{$\boldsymbol k$-th Chern class} of $E$. Its importance is seen by Lemma~\ref{lem:elem_sym_pol_gen}, which shows that every characteristic class constructed via the Chern--Weil method can be obtained from polynomials in the Chern classes $c_k(E)$.

Note the following two significant properties of multiplicative classes:
\begin{myenum}
	\item If $E$ is a complex line bundle, we have $\Pi_f(E) = f(c_1(E))$.
	\item If $E = E_1 \oplus E_2$, we have $\Pi_f(E) = \Pi_f(E_1)\, \Pi_f(E_2)$.
\end{myenum}
Property ii) can be easily seen by picking a direct sum connection and finally justifies the name \enquote{multiplicative}. In fact, both properties determine the characteristic class $\Pi_f(E)$ uniquely. This follows from the \defstyle{splitting principle}.
\begin{mythm}[Splitting Principle]
	Let $E$ be a complex vector bundle of rank $n$ over a manifold $M$ and $R$ be a commutative ring. Then, there exist another manifold $F(E)$, called \defstyle{flag manifold} associated to $E$, and a map $g: F(E) \to M$ such that
	\begin{myenum}
		\item the induced cohomology homomorphism $$g^* : H^*(M;R) \to H^*(F(E);R)$$ is injective and
		\item the pullback bundle $g^*E$ splits into the direct sum of line bundles: $$g^*E = L_1 \oplus \dots \oplus L_n.$$
	\end{myenum}
\end{mythm}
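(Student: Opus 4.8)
The plan is to build $F(E)$ by iterating the projective-bundle construction, using the Leray--Hirsch theorem to keep track of cohomology at each stage, and then to upgrade the resulting flag of $g^{*}E$ to a direct sum by means of a Hermitian metric. I would begin with the \emph{projectivized bundle} $\pi\colon\mathbb{P}(E)\to M$, whose fibre over $p\in M$ is the space $\mathbb{P}(E_{p})$ of complex lines through the origin in $E_{p}$. A local trivialization of $E$ over $U$ induces one of $\mathbb{P}(E)$ over $U$ with fibre $\CP^{n-1}$, so $\mathbb{P}(E)$ is again a smooth manifold and $\pi$ a smooth fibre bundle. On $\mathbb{P}(E)$ one has the tautological line bundle $\gamma\subset\pi^{*}E$ with $\gamma_{\ell}=\ell$ for every line $\ell\in\mathbb{P}(E_{p})$; choosing a Hermitian metric on $E$ (which exists since $M$ is paracompact), pulling it back to $\pi^{*}E$ and taking the orthogonal complement of $\gamma$ yields a splitting $\pi^{*}E\cong\gamma\oplus Q$ with $Q$ a complex vector bundle of rank $n-1$ over $\mathbb{P}(E)$.

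Next I would prove that $\pi^{*}\colon H^{*}(M;R)\to H^{*}(\mathbb{P}(E);R)$ is injective. The class $x:=c_{1}(\gamma^{*})\in H^{2}(\mathbb{P}(E);R)$ restricts on each fibre to the canonical generator of $H^{*}(\CP^{n-1};R)\cong R[x]/(x^{n})$, so $1,x,\dots,x^{n-1}$ restrict fibrewise to an $R$-basis of the cohomology of the fibre. By the Leray--Hirsch theorem, $H^{*}(\mathbb{P}(E);R)$ is then a free $H^{*}(M;R)$-module on $1,x,\dots,x^{n-1}$, and in particular $\pi^{*}$ is injective. (Over a general commutative ring one may, if preferred, replace the Chern class by the Thom/Euler class of $\gamma^{*}$ and read off injectivity from the Gysin sequence of the sphere bundle associated to $\gamma$; note that no Chern--Weil input enters here, the statement being purely topological.)

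I would then iterate: applying the same construction to $Q\to\mathbb{P}(E)$ produces a projective bundle over $\mathbb{P}(E)$ carrying its own tautological line bundle and a rank-$(n-2)$ orthogonal complement, and after $n-1$ steps one obtains a tower
\begin{align*}
	F(E)=M_{n-1}\longrightarrow M_{n-2}\longrightarrow\cdots\longrightarrow M_{1}\longrightarrow M_{0}=M ,
\end{align*}
whose composite is the desired map $g\colon F(E)\to M$. Each arrow induces an injection on cohomology with coefficients in $R$ by the preceding paragraph, and a composition of injections is injective, which proves i). By construction $g^{*}E$ acquires, at the top of the tower, a nested sequence of subbundles whose successive quotients are the pulled-back tautological line bundles; fixing a Hermitian metric on $g^{*}E$ and replacing this flag by its iterated orthogonal complements (equivalently, carrying the orthogonal splittings through the tower from the start) turns it into an internal direct sum $g^{*}E\cong L_{1}\oplus\cdots\oplus L_{n}$ of line bundles, which proves ii).

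The main obstacle is the injectivity step: everything else is a bookkeeping induction and a metric argument, but the assertion that $H^{*}(\mathbb{P}(E);R)$ is free over $H^{*}(M;R)$ on the powers of $c_{1}(\gamma^{*})$ rests on the Leray--Hirsch theorem, whose hypotheses must be verified with care — in particular that $\gamma^{*}$ restricts to $\mathcal{O}(1)$ on each fibre $\CP^{n-1}$ and that its first Chern class generates the truncated polynomial ring $R[x]/(x^{n})$ for an arbitrary commutative ring $R$, which one checks via the cellular structure of $\CP^{n-1}$ and the universal coefficient theorem.
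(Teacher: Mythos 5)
Your proposal is correct and is essentially the argument the paper points to: the thesis does not prove the splitting principle itself but refers to \cite[Prop.~11.1]{lawson:spin} and \cite[Thm.~2.6.1]{randal:characteristic}, both of which run exactly your induction on iterated projectivizations, with Leray--Hirsch (powers of $c_1(\gamma^*)$ freely generating $H^*(\mathbb{P}(E);R)$ over $H^*(M;R)$) giving injectivity and a Hermitian metric converting the resulting flag of $g^*E$ into a direct sum of line bundles. Your attention to the coefficient ring $R$ via the universal coefficient theorem and the Gysin-sequence alternative is a sound way to justify the Leray--Hirsch hypotheses, so no gap remains.
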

For the proof, we refer to~\cite[Prop.~11.1]{lawson:spin},~\cite[Thm.~2.6.1]{randal:characteristic}. A full discussion with respect to $K$-theory can be found in~\cite{hatcher:vector_bundles}. As usual, we restrict ourselves to the case $R=\RR, \CC$. The first Chern classes $c_1(L_j)$ of the corresponding line bundles $L_j$ into which $g^*E$ splits are called \defstyle{Chern roots} of $E$. We find that they determine the $k$-th Chern class of $E$ in the following way:
\begin{align*}
	c_k(E) = \sigma_k \left( c_1(L_1), \ldots, c_1(L_n) \right).
\end{align*}
Now, suppose $(x_j)^n_{j=1}$ are the eigenvalues including multiple appearances of a complex matrix ${X}/{2 \pi i}$. Then we obtain
\begin{align}
	\det\left(f\left(\frac{X}{2 \pi i}\right)\right) = \prod_{j=1}^{n} f(x_j), \label{eq:chern_to_eigenvalues}
\end{align}
which is a symmetric formal power series in the variables $x_j$. Hence, we can express it in terms of the elementary symmetric functions in these indeterminates $x_j$. This means, each $x_j$ can be considered as representing the Chern root $c_1(L_j)$. This result again emphasizes the fundamental significance of Chern classes.

The theory needs to be handled with greater precaution in the real case. Let $E$ be a real vector bundle and $E \otimes_{\RR} \CC$ its complexification. Suppose $X$ is a real skew-symmetric matrix. Due to~\eqref{eq:elem_sym_pol}, we see that
\begin{align*}
	c_k(X) = c_k(X^t) = c_k(-X) = (-1)^k c_k(X)
\end{align*}
from which we immediately infer $c_{2k+1}(X) = 0$. Since every real vector bundle carries a metric and hence a compatible connection, the curvature form matrix can be chosen skew-symmetric. In conclusion, the odd Chern classes of $E \otimes_{\RR} \CC$ vanish. It is convenient to denote
\begin{align*}
	p_k := (-1)^k c_{2k}
\end{align*}
from which we gain a new class $p_k(E) \in H^{4k}(M;\RR)$ called \defstyle{$\boldsymbol k$-th Pontryagin class} of $E$. The corresponding sum $$p(E) = p_0(E) + p_1(E) + \ldots \in H^{4*}_{\mathrm{dR}}(M;\RR)$$ is called \defstyle{total Pontryagin class} of $E$. This leads to an analogous theory of multiplicative classes for real vector bundles.

\paragraph{Multiplicative Classes on Real Bundles.}
Suppose $E$ is a real vector bundle of rank $n$ and $g$ is a holomorphic function near zero with $g(0)=1$. We define
\begin{align}
	f(z) := \sqrt{g(z^2)}
	\label{eq:f_from_g_genus}
\end{align}
to be the branch satisfying $f(0)=1$. We call the multiplicative characteristic class $\Pi_f(E \otimes_{\RR} \CC)$ the \defstyle{multiplicative class associated to $\boldsymbol g$} on the \emph{real} vector bundle $E$.

Remember \eqref{eq:chern_to_eigenvalues} and its role for the splitting principle. We have a quite similar equation in terms of Pontryagin classes, explained by the following lemma.
\begin{mylem}
	The multiplicative class on a real vector bundle $E$ associated to a function $g$ as above is equal to
	\begin{align*}
		\prod^{\lfloor n/2 \rfloor}_{j=1} g(y_j)
	\end{align*}
	where the Pontryagin classes of $E$ are the elementary symmetric functions in the indeterminates $y_j$.
\end{mylem}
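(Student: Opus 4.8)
The plan is to reduce the statement to the complex splitting principle applied to the complexification $E \otimes_\RR \CC$, together with the $\pm$-symmetry of its Chern roots forced by the real structure. By definition the multiplicative class on $E$ associated to $g$ is $\Pi_f(E \otimes_\RR \CC)$ with $f(z) = \sqrt{g(z^2)}$ and $f(0) = 1$, see~\eqref{eq:f_from_g_genus}. Pick a connection on $E \otimes_\RR \CC$, let $\Omega$ be its curvature form matrix, and let $x_1, \dots, x_n$ be the Chern roots, i.e.\ the formal eigenvalues of $\Omega/2\pi i$ supplied by the splitting principle (all identities below take place in $H^*$ of the flag manifold, whence they descend by injectivity of $g^*$). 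Equation~\eqref{eq:chern_to_eigenvalues} then gives
\begin{align*}
	\Pi_f(E \otimes_\RR \CC) = \det\!\left( f\!\left( \tfrac{\Omega}{2\pi i} \right) \right) = \prod_{j=1}^{n} f(x_j).
\end{align*}

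First I would show that the multiset $\{x_1, \dots, x_n\}$ is invariant under $x \mapsto -x$. The real structure of $E$ furnishes a complex-bundle isomorphism $\overline{E \otimes_\RR \CC} \cong E \otimes_\RR \CC$, and conjugating a line bundle negates its first Chern class, so the Chern roots of the conjugate bundle are $-x_1, \dots, -x_n$; comparing multisets yields the claim (this refines the relation $c_{2k+1}(E \otimes_\RR \CC) = 0$ already recorded above). Hence the nonzero roots split into pairs $\pm x_j$, and — padding with zeros, which is harmless since $f(0) = 1$ — we may list the full multiset as $\{\, x_j,\, -x_j : j = 1, \dots, \lfloor n/2 \rfloor \,\}$, plus a single extra $0$ when $n$ is odd.

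Next comes the square-root bookkeeping: from $f(z)^2 = g(z^2) = f(-z)^2$ and $f(\pm z) \to 1$ as $z \to 0$ one sees that $f$ is even, so $f(x_j)\, f(-x_j) = f(x_j)^2 = g(x_j^2)$. Writing $y_j := x_j^2$ and grouping the paired factors in the product above yields $\Pi_f(E \otimes_\RR \CC) = \prod_{j=1}^{\lfloor n/2 \rfloor} g(y_j)$. It then remains to recognise the Pontryagin classes of $E$ as the elementary symmetric functions in the $y_j$. Comparing coefficients of powers of $t$ in
\begin{align*}
	\prod_{j=1}^{\lfloor n/2 \rfloor} (1 + t x_j)(1 - t x_j) = \prod_{j=1}^{\lfloor n/2 \rfloor} (1 - t^2 y_j)
\end{align*}
shows that the odd elementary symmetric functions of the family $x_j, -x_j$ vanish and that $\sigma_{2k}(x_1, -x_1, \dots) = (-1)^k\, \sigma_k(y_1, \dots, y_{\lfloor n/2 \rfloor})$, the possible zero root changing nothing. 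Since Lemma~\ref{lem:elem_sym_pol_gen} and~\eqref{eq:elem_sym_pol} identify $c_{2k}(E \otimes_\RR \CC)$ with $\sigma_{2k}$ of the Chern roots, the definition $p_k = (-1)^k c_{2k}$ gives $p_k(E) = \sigma_k(y_1, \dots, y_{\lfloor n/2 \rfloor})$, as asserted.

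The main obstacle I expect is making the $\pm$-pairing of the Chern roots rigorous at the level of the splitting principle rather than merely at the level of formal symmetric polynomials: one either invokes a real splitting principle (presenting $E$, up to the relevant injection on cohomology, as a sum of oriented rank-$2$ bundles plus possibly a trivial line) or argues that the conjugation isomorphism $\overline{E \otimes_\RR \CC} \cong E \otimes_\RR \CC$ respects a chosen splitting into line bundles. The remaining steps — the branch identity for $f$ and the symmetric-function comparison — are routine.
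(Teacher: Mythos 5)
Your proposal is correct in outline but takes a genuinely different route from the paper. The thesis does not prove this lemma in-house at all --- it defers to \cite[Lem.~2.27]{roe:elliptic} --- and the model argument it does write out (for the additive analogue a few lines below) works at the Chern--Weil level: pick a metric connection so the curvature matrix is skew-symmetric, block-diagonalize it into $2\times 2$ blocks $\begin{smallpmatrix} 0 & \lambda \\ -\lambda & 0 \end{smallpmatrix}$ plus possibly a zero block, and evaluate the invariant polynomial block by block, the eigenvalues $\pm i\lambda$ playing the role of your $\pm x_j$. You instead argue via the splitting principle and the Chern roots of $E\otimes_\RR\CC$, using \eqref{eq:chern_to_eigenvalues}, evenness of $f$ from \eqref{eq:f_from_g_genus}, and the symmetric-function dictionary \eqref{eq:elem_sym_pol}. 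Your route is more structural and isolates exactly where the real structure enters; the curvature-block computation is more pedestrian but never has to discuss how the roots pair up, which is precisely where your write-up is soft.

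That soft spot is the one you flag yourself, and as stated it is a real gap: from $\overline{E\otimes_\RR\CC}\cong E\otimes_\RR\CC$ you only get equality of Chern classes, i.e.\ of the elementary symmetric functions of the roots (equivalently $c_{2k+1}(E\otimes_\RR\CC)=0$ in de Rham cohomology); two splittings of the same bundle into line bundles need not have the same multiset of first Chern classes, so \enquote{comparing multisets} does not yet give the pairing $\{x_j,-x_j\}$. Both repairs you name work, and the first is exactly what the paper points to right after the lemma: the real splitting principle of \cite[Prop.~11.2]{lawson:spin} pulls $E$ back to a sum of oriented $2$-plane bundles whose complexifications are $L_j\oplus\overline{L_j}$, making the pairing literal. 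Alternatively you can bypass the pairing entirely: since $f$ is even, $\prod_j f(x_j)=\prod_j h(x_j^2)$ with $h(w)=\sqrt{g(w)}$, $h(0)=1$, a symmetric power series in the $x_j^2$; writing $1+s\,x_j^2=(1+iux_j)(1-iux_j)$ with $s=u^2$ and using $c_{2k+1}=0$ together with $p_k=(-1)^k c_{2k}$ gives
\begin{align*}
	\prod_{j=1}^{n}\left(1+s\,x_j^2\right) \;=\; \Bigl(\sum_{k} p_k\, s^k\Bigr)^{\!2} \;=\; \prod_{j=1}^{\lfloor n/2 \rfloor}\left(1+s\,y_j\right)^2,
\end{align*}
so the elementary symmetric functions of the $x_j^2$ agree with those of the family $(y_1,y_1,\ldots,y_{\lfloor n/2\rfloor},y_{\lfloor n/2\rfloor})$ padded by zeros, whence $\prod_j h(x_j^2)=\prod_k h(y_k)^2=\prod_k g(y_k)$ without ever pairing individual roots. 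With either fix inserted, your proof is complete and fully consistent with the paper's conventions.
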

The proof can be found in~\cite[Lem.~2.27]{roe:elliptic}. This justifies the name \enquote{multiplicative} for real vector bundles. Moreover, this explains the choice of squares and roots in~\eqref{eq:f_from_g_genus}. Furthermore, this lemma can be interpreted in terms of a splitting principle: there is a pullback of $E$ such that it splits into a sum of suitable real 2-plane bundles (and additionally one line bundle when the rank is odd), where $y_j$ represents the first Pontryagin class of the $j$-th 2-plane bundle. For details see~\cite[Prop.~11.2, Obs.~11.8]{lawson:spin}.

\paragraph{Additive Classes on Complex Bundles.}
Let us introduce characteristic classes of additive type. For this, we assume $E$ to be a complex vector bundle over a manifold $M$ with connection $\nabla$ and curvature form matrix $\Omega$ again. We denote 
\begin{align*}
	\Sigma_f(E, \nabla) := \tr\left( f\left( \frac{\Omega}{2 \pi i} \right) \right) \in \Omega^{2*}(M; \CC)
\end{align*}
and define the \defstyle{additive characteristic class associated to $\boldsymbol f$} to be
\begin{align*}
	\Sigma_f(E) := \left[ \Sigma_f(E, \nabla) \right] \in H^{2*}_{\mathrm{dR}}(M; \CC).
\end{align*}
The term \enquote{additive} comes from the obvious property
\begin{align*}
	\Sigma_f(E_1 \oplus E_2) = \Sigma_f(E_1) + \Sigma_f(E_2).
\end{align*}

The most prominent additive class is the so-called \defstyle{Chern character} $\mathrm{ch}(E)$ of $E$. It lives in $H^{2*}_{\mathrm{dR}}(M; \RR)$ and is associated to the function $$f(z) = \exp\left( z \right).$$
The Chern character plays an important role since it defines some sort of \enquote{ring homomorphism} via
\begin{align*}
	\mathrm{ch}(E_1 \oplus E_2) &= \mathrm{ch}(E_1) + \mathrm{ch}(E_2), \\
	\mathrm{ch}(E_1 \otimes E_2) &= \mathrm{ch}(E_1)\, \mathrm{ch}(E_2).
\end{align*}
The second equation can be obtained from the well-known behavior of the exponential map $\expe^{z_1}\expe^{z_2}=\expe^{z_1+z_2}$ and using the tensor product connection.

\paragraph{Additive Classes on Real Bundles.}
As previously discussed in the multiplicative case, real vector bundles need a special treatment in the additive case, too. Again, let $E$ be a real vector bundle over a manifold $M$ and $E \otimes_{\RR} \CC$ its complexification. Furthermore, let $g$ be a holomorphic function near zero with $g(0)=0$. We denote
\begin{align*}
	f(z) := \frac{g(z^2)}{2}
\end{align*}
and define $\Sigma_f(E \otimes_{\RR} \CC)$ to be the \defstyle{additive characteristic class associated to $\boldsymbol g$} of the \emph{real} vector bundle $E$. The following lemma adapted from \cite[Lem.~2.27]{roe:elliptic} explains the factor $1/2$.
\begin{mylem}
	The additive characteristic class associated to a function $g$ as above of a real vector bundle $E$ is equal to
	\begin{align*}
		\sum^{\lfloor n/2 \rfloor}_{j=1} g(y_j)
	\end{align*}
	where the Pontryagin classes of $E$ are the elementary symmetric functions in the indeterminates $y_j$. 
\end{mylem}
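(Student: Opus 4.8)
The plan is to argue exactly as in the multiplicative real case \cite[Lem.~2.27]{roe:elliptic}: pull back along the splitting principle for real bundles and then reduce everything to a statement about Chern roots of the complexification. Write $m := \lfloor n/2 \rfloor$. By \cite[Prop.~11.2, Obs.~11.8]{lawson:spin} there is a map $h : F \to M$ with $h^*$ injective on cohomology such that $h^* E$ decomposes as a direct sum $F_1 \oplus \cdots \oplus F_m$ of real oriented $2$-plane bundles, supplemented by a real line bundle $\ell$ when $n$ is odd. Since pullback commutes with complexification, with the passage to curvature forms (hence with $\Sigma_f$), and with Pontryagin classes, it suffices to prove the identity for $h^* E$; I therefore assume $E$ is of that form.

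First I would complexify and read off the Chern roots. The complexification of an oriented real $2$-plane bundle splits as $F_j \otimes_{\RR} \CC \cong L_j \oplus \overline{L_j}$ for a complex line bundle $L_j$; put $x_j := c_1(L_j)$, so that $c_1(\overline{L_j}) = -x_j$ and $p_1(F_j) = x_j^2$. If $n$ is odd, $(\ell \otimes_{\RR} \CC)^{\otimes 2} \cong \CC \times M$ is trivial, so $c_1(\ell \otimes_{\RR} \CC)$ is $2$-torsion and vanishes in de~Rham cohomology. Hence the Chern roots of $E \otimes_{\RR} \CC$ are $\pm x_1, \ldots, \pm x_m$ (and a further $0$ when $n$ is odd), and the total Chern class is $\prod_{j=1}^m (1+x_j)(1-x_j) = \prod_{j=1}^m (1 - x_j^2)$. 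Comparing with \eqref{eq:elem_sym_pol} gives $c_{2k}(E \otimes_{\RR} \CC) = (-1)^k \sigma_k(x_1^2, \ldots, x_m^2)$, whence $p_k(E) = (-1)^k c_{2k}(E \otimes_{\RR} \CC) = \sigma_k(x_1^2, \ldots, x_m^2)$; that is, the Pontryagin classes of $E$ are the elementary symmetric functions in the indeterminates $y_j := x_j^2$, which pins down the meaning of the $y_j$ in the statement.

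Next I would evaluate $\Sigma_f$. Since $\Sigma_f$ is additive and, by the trace analogue of \eqref{eq:chern_to_eigenvalues}, evaluates on a bundle with Chern roots $z_1, \ldots, z_N$ to $\sum_i f(z_i)$, one gets
\[
	\Sigma_f(E \otimes_{\RR} \CC) = \sum_{j=1}^m \bigl( f(x_j) + f(-x_j) \bigr),
\]
with an additional summand $f(0)$ when $n$ is odd. Substituting $f(z) = g(z^2)/2$ and using that $z \mapsto z^2$ is even, $f(x_j) + f(-x_j) = \tfrac12 g(x_j^2) + \tfrac12 g(x_j^2) = g(y_j)$, while $f(0) = g(0)/2 = 0$ by the hypothesis $g(0)=0$. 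Therefore $\Sigma_f(E \otimes_{\RR} \CC) = \sum_{j=1}^m g(y_j)$, as claimed.

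The only genuinely nontrivial point — the main obstacle — is justifying the trace evaluation on Chern roots, i.e.\ that the Chern--Weil form $\tr\!\bigl( f(\Omega / 2\pi i) \bigr)$ represents $\sum_j f(z_j)$. This is the additive counterpart of \eqref{eq:chern_to_eigenvalues}: because $\Omega$ is nilpotent-valued, $f(\Omega/2\pi i)$ is a polynomial expression whose trace is the corresponding power sum in the formal eigenvalues, and after pulling back to the flag bundle those formal eigenvalues are exactly the Chern roots. One must also keep track of the extra line-bundle summand in the odd-rank case, but as just computed it only contributes $f(0)=0$. Everything else is the elementary-symmetric-function identity $\prod_j (1 - x_j^2)$ together with the parity observation $f(x) + f(-x) = g(x^2)$.
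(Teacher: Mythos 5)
Your argument is correct, but it follows a genuinely different route from the paper's proof. You work globally: pull back along the real splitting principle of \cite[Prop.~11.2, Obs.~11.8]{lawson:spin}, complexify each oriented $2$-plane summand into $L_j \oplus \overline{L_j}$, read off the Chern roots $\pm x_j$ (plus $0$ in odd rank), identify $y_j = x_j^2$ via $\prod_j(1-x_j^2)$ and \eqref{eq:elem_sym_pol}, and then evaluate the trace on the Chern roots using the parity identity $f(x_j)+f(-x_j)=g(x_j^2)$. The paper instead stays entirely at the level of the curvature matrix: it observes that a skew-symmetric matrix is similar to a block-diagonal one with $2\times 2$ blocks $\begin{smallpmatrix} 0 & \lambda \\ -\lambda & 0\end{smallpmatrix}$ (plus possibly a $1\times 1$ zero block), notes that both sides of the identity are additive under direct sums, and verifies the claim block by block with a two-line computation: $c_1=0$, $c_2=-\lambda^2/4\pi^2$, hence $y=p_1=\lambda^2/4\pi^2$, while $\tr\left(f(X/2\pi i)\right)=f(\lambda/2\pi)+f(-\lambda/2\pi)=2f(\lambda/2\pi)=g(y)$ --- the same parity observation you use, but applied to formal eigenvalues rather than Chern roots. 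What each approach buys: the paper's argument is self-contained and elementary, needing neither the real splitting principle nor the additive (trace) analogue of \eqref{eq:chern_to_eigenvalues}, which is precisely the step you correctly flag as the main thing requiring justification in your version; your argument, in exchange, makes the cohomological meaning of the indeterminates $y_j$ explicit (first Pontryagin classes of the $2$-plane summands on the flag bundle) and handles the odd-rank line-bundle summand cleanly via $c_1(\ell\otimes_\RR\CC)$ being $2$-torsion and $f(0)=g(0)/2=0$. Your sketch of the trace evaluation on Chern roots (nilpotency of $\Omega$, reduction to a direct-sum connection on the pullback, connection-independence from Theorem~\ref{thm:inv_ind_con}) is the right justification, so no genuine gap remains, but be aware that the paper's block-diagonal computation sidesteps that machinery altogether.
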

\begin{proof}
	In the real case, the curvature matrix can be chosen skew-symmetric and is therefore similar to one in block diagonal form. The corresponding blocks are $2 \times 2$ and of the form
	\begin{align*}
		X = \begin{pmatrix}0 & \lambda \\ -\lambda & 0 \end{pmatrix}
	\end{align*}
	with eigenvalues $\pm i \lambda$ and possibly one $1 \times 1$ block equal to zero. Since both sides of the stated equality are additive for direct sums, it is sufficient to prove it for either block type. The zero case is trivial. For the case of block type $X$, we have
	\begin{align*}
		c_1(X) = 0, \quad c_2(X) = - \frac{\lambda^2}{4 \pi^2}
	\end{align*}
	and thus $y=p_1(X) = \frac{\lambda^2}{4 \pi^2}$. On the other hand, the matrix $X$ is similar to
	\begin{align*}
		\begin{pmatrix}- i \lambda & 0 \\ 0 & i \lambda \end{pmatrix}
	\end{align*}
	over the complex field, and
	\begin{align*}
		\tr\left( f({X}/{2\pi i}) \right) = f({\lambda}/{2 \pi}) + f(-{\lambda}/{2 \pi}) = 2 f({\lambda}/{2 \pi}) = g({\lambda^2}/{4 \pi^2}) = g(y)
	\end{align*}
	holds as required.
\end{proof}

\paragraph{Pfaffian Classes.}
On oriented real vector bundles with even rank, there is a characteristic class that cannot be directly expressed in terms of Pontryagin classes. Let $X \in \mathfrak{so}(2n)$ be a skew-symmetric matrix and $(e_1, \dots, e_{2n})$ be the standard basis of $\RR^{2n}$. We put
\begin{align*}
	\alpha = \sum_{i < j} X_{ij} \, e_i \wedge e_j
\end{align*}
and define the so-called \defstyle{Pfaffian} $\Pf(X) \in \RR$ via
\begin{align*}
	\frac{1}{n!} \alpha^n = \Pf(X) \; e_1 \wedge \dots \wedge e_{2n}.
\end{align*}
From this definition, we infer some essential properties stated by the following lemma.
\begin{mylem}\label{lem:pfaffian}
	Suppose $X\in \mathfrak{so}(2n)$ is a real skew-symmetric matrix. Then the following three statements hold:
	\begin{myenum}
		\item $\Pf(\lambda X) = \lambda^n \Pf(X)$ for $\lambda \in \RR$,
		\item $\Pf(B X B^t) = \det(B) \Pf(X)$ for $B \in \mathfrak{gl}(n, \RR)$,
		\item $\Pf(X)^2 = \det(X)$.
	\end{myenum}
\end{mylem}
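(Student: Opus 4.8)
The plan is to argue directly from the defining relation $\tfrac{1}{n!}\,\alpha_X^{\,n} = \Pf(X)\, e_1\wedge\dots\wedge e_{2n}$, where $\alpha_X := \sum_{i<j} X_{ij}\, e_i\wedge e_j \in \bigwedge^2\RR^{2n}$, and to treat the three items in turn. Item i) is immediate: $\alpha_X$ is $\RR$-linear in $X$, so $\alpha_{\lambda X} = \lambda\,\alpha_X$ and hence $\alpha_{\lambda X}^{\,n} = \lambda^n\alpha_X^{\,n}$; comparing with the definition gives $\Pf(\lambda X) = \lambda^n \Pf(X)$.

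For ii) the idea is to use functoriality of the exterior algebra. A matrix $B$ induces the endomorphism $T\colon \RR^{2n}\to\RR^{2n}$, $T e_j := \sum_i B_{ij}\, e_i$, which extends uniquely to a graded algebra endomorphism $T$ of $\bigwedge^{\bullet}\RR^{2n}$. On the top exterior power we have the standard identity $T(e_1\wedge\dots\wedge e_{2n}) = \det(B)\, e_1\wedge\dots\wedge e_{2n}$. The point is then to verify that $T$ maps the $2$-form attached to $X$ to the one attached to $BXB^t$. Expanding
\[
T(\alpha_X) \;=\; \sum_{i<j} X_{ij}\,(Te_i)\wedge(Te_j) \;=\; \sum_{k,l}\Bigl(\sum_{i<j} B_{ki}\,X_{ij}\,B_{lj}\Bigr)\, e_k\wedge e_l,
\]
and collecting, for $k<l$, the coefficient of $e_k\wedge e_l$ (using $e_l\wedge e_k = -e_k\wedge e_l$), one recognises it — after a short manipulation that invokes the skew-symmetry of $X$ — as $(BXB^t)_{kl}$, i.e.\ $T(\alpha_X)=\alpha_{BXB^t}$. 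Applying the algebra homomorphism $T$ to $\tfrac{1}{n!}\alpha_X^{\,n}$ and combining the two identities yields $\Pf(BXB^t)\,e_1\wedge\dots\wedge e_{2n} = \det(B)\,\Pf(X)\,e_1\wedge\dots\wedge e_{2n}$, hence ii). (The identity in fact holds for every real $2n\times 2n$ matrix $B$, not only the invertible ones.)

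For iii) I would reduce to a normal form. By the real spectral theorem for skew-symmetric matrices there is $Q\in\SO(2n)$ with $X = Q D Q^t$, where $D$ is block diagonal with $2\times 2$ blocks $\begin{pmatrix}0 & \lambda_j\\ -\lambda_j & 0\end{pmatrix}$, $j=1,\dots,n$. By ii), $\Pf(X) = \det(Q)\,\Pf(D) = \Pf(D)$, and since $Q$ is orthogonal $\det(X) = \det(D)$, so it suffices to treat $D$. Here a direct computation does the job: $\alpha_D = \sum_{j=1}^{n}\lambda_j\, e_{2j-1}\wedge e_{2j}$, all mixed terms in $\alpha_D^{\,n}$ vanish, and one obtains $\tfrac{1}{n!}\alpha_D^{\,n} = \bigl(\prod_{j}\lambda_j\bigr) e_1\wedge\dots\wedge e_{2n}$, so $\Pf(D) = \prod_j\lambda_j$; on the other hand $\det(D) = \prod_j\lambda_j^2 = \Pf(D)^2$, which gives $\Pf(X)^2 = \det(X)$.

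The main obstacle is the index bookkeeping in the middle step of ii): items i) and iii) are either immediate or routine once the normal form of a real skew-symmetric matrix is available, whereas the identification $T(\alpha_X) = \alpha_{BXB^t}$ is the one place where the skew-symmetry of $X$ genuinely has to be used, and it is easy to slip on factors of $2$ or on signs there. A minor point to keep in mind in iii) is that one may indeed take $Q$ in $\SO(2n)$ rather than only in $O(2n)$; in any case $\det(Q)^2 = 1$, so the conclusion is unaffected by the choice.
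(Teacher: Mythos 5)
Your proposal is correct and follows essentially the same route as the paper: i) directly from linearity, ii) via the substitution $e_i \mapsto \sum_k B_{ki}e_k$ (which the paper writes out with auxiliary vectors $f_i$ rather than invoking the induced algebra endomorphism, but it is the same computation, including the implicit use of skew-symmetry when passing between the sums over $i<j$ and over all $i,j$), and iii) by reducing to the block-diagonal normal form under a special orthogonal change of basis and applying ii). The only differences are cosmetic, such as your remark that ii) needs no invertibility of $B$ and that $\det(Q)^2=1$ makes the choice of $Q\in\SO(2n)$ versus $O(2n)$ irrelevant for iii).
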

\begin{proof}
	Claim i) is obvious and follows evidently from the definition. Now, assume $B \in \mathfrak{gl}(n, \RR)$ and set $f_i = \sum_{k} B_{ki}\, e_k$. Then we get:
	\begin{align*}
		n!\, \Pf(B X B^t) \, e_1 \wedge \dots \wedge e_{2n} &= \left( \frac{1}{2} \sum_{i,j} (B X B^t)_{ij} \, e_i \wedge e_j \right)^n \\
			&= \left( \frac{1}{2} \sum_{i,j} \sum_{k,l} B_{ik} X_{kl} B_{jl} \, e_i \wedge e_j \right)^n \\
			&= \left( \frac{1}{2} \sum_{k,l} X_{kl} \, f_k \wedge f_l \right)^n \\
			&= n!\, \Pf(X)\, f_1 \wedge \dots \wedge f_{2n} \\
			&= n!\, \Pf(X) \det(B) \, e_1 \wedge \dots \wedge e_{2n}.
	\end{align*}
	This yields the desired result of assertion ii). To deal with the remaining case iii), we make use of the fact that $X$ is similar~to
	\begin{align*}
		\mathrm{diag}\left( \begin{smallpmatrix} 0 & \lambda_1 \\ -\lambda_1 & 0 \end{smallpmatrix}, \dots, \begin{smallpmatrix} 0 & \lambda_n \\ -\lambda_n & 0 \end{smallpmatrix} \right)
	\end{align*}
	using an orthogonal transformation $T \in \mathrm{SO}(2n)$ which is orientation preserving. On the one hand, by using ii), we see
	\begin{align*}
		\Pf(X) \, e_1 \wedge \dots \wedge e_{2n} &= \Pf(TXT^t) \, e_1 \wedge \dots \wedge e_{2n} \\
			&= \frac{1}{n!} \left( \lambda_1 \, e_1 \wedge e_2 + \dots + \lambda_n \, e_{2n-1} \wedge e_{2n} \right)^n \\
			&= \left( \lambda_1 \cdot \ldots \cdot \lambda_n \right) \, e_1 \wedge \dots \wedge e_{2n}.
	\end{align*}
	On the other hand, we have
	\begin{align*}
		\det(X) = \det(TXT^t) = \lambda_1^2 \cdot \ldots \cdot \lambda_n^2.
	\end{align*}
	This confirms the required identity.
\end{proof}
Statement ii) reveals that the Pfaffian is an invariant polynomial on $\mathfrak{so}(2n)$ and gives rise to a characteristic class. Consider an oriented real vector bundle $E$ of rank $2n$ over a manifold $M$ and let $g$ be an odd, real analytic function near zero. We fix a metric on $E$ and choose a compatible connection $\nabla$. Restricted to oriented orthonormal frames on suitable subsets $U \subset M$, the corresponding curvature form matrix $\Omega$ is skew-symmetric; and so is $g(\Omega)$. If we change to another frame using $h: U \to \mathrm{SO}(2n)$, the curvature transforms as $h \Omega h^t$. Hence, we can define the global object
\begin{align*}
(\Pf)_g(E, \nabla) := \Pf\left( g\left( \frac{\Omega}{2 \pi} \right) \right) \in \Omega^{2n*}(M; \RR)
\end{align*}
and accordingly the \defstyle{Pfaffian class associated to $\boldsymbol g$}:
\begin{align*}
(\Pf)_g(E) := \left[\Pf_g(E, \nabla)\right] \in H_{\mathrm{dR}}^{2n*}(M; \RR).
\end{align*}
The proof of well-definedness is the same as before except in the use of invariant polynomials on $\mathfrak{so}(2n)$. Due to assertion i) of Lemma~\ref{lem:pfaffian}, we indeed see that the Pfaffian class lives in $H^{2n*}(M; \RR)$. Finally, the last property iii) is quite interesting in terms of the splitting principle. As the proof indicates, the Pfaffian class is equal to
\begin{align*}
	\prod_{j=1}^{n} g(\sqrt{y_j})
\end{align*}
where the Pontryagin classes are the elementary symmetric functions in the indeterminates $y_j$. So in a sense, the Pfaffian class describes a class of \enquote{square-root type}.

The most important Pfaffian class is induced by the function $g(x)=x$ and is called \defstyle{Euler class}, which is denoted by $e(E)$. Its prominent status is explained by the following theorem.
\begin{mythm}[Gauß--Bonnet--Chern]\label{thm:gauss_bonnet_chern}
	Let $M$ be a closed, oriented, $2n$-dimensional Riemannian manifold. Then we have
	\begin{align*}
		\chi(M) = \int_{M} e(TM),
	\end{align*}
	where $\chi(M)$ denotes the Euler characteristic of $M$ and $e(TM)$ the Euler class of the tangent bundle $TM$.
\end{mythm}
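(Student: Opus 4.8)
The plan is to reduce everything to the Chern--Weil representative of the Euler class and then to localize at the zeros of a vector field. Since the Euler class is by definition the Pfaffian class attached to $g(x)=x$, I would work with the Levi--Civita connection $\nabla$ of the Riemannian metric; restricted to oriented orthonormal frames its curvature form matrix $\Omega$ is skew-symmetric, so $\Pf(\Omega/2\pi)$ is a well-defined global $2n$-form representing $e(TM)$, and $\int_M e(TM) = \int_M \Pf(\Omega/2\pi)$. The task therefore becomes to prove
\[
	\int_M \Pf\!\left( \frac{\Omega}{2\pi} \right) = \chi(M) .
\]
For the right-hand side I would invoke the Poincar\'e--Hopf index theorem: fix a smooth vector field $V$ on $M$ with isolated zeros $p_1,\dots,p_r$, so that $\chi(M) = \sum_{i=1}^r \ind_{p_i}(V)$.

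Next I would show that the Euler form is exact on $M_0 := M \setminus \{p_1,\dots,p_r\}$. There $V/\abs{V}$ is a nowhere-vanishing unit section of $TM|_{M_0}$, giving an orthogonal splitting $TM|_{M_0} = L \oplus W$ with $L$ the trivial line bundle spanned by $V/\abs{V}$. Choose a metric connection $\nabla'$ on $TM|_{M_0}$ that respects this splitting and is the flat connection on $L$. In an adapted oriented orthonormal frame (with first vector $V/\abs{V}$) the connection matrix of $\nabla'$ has vanishing first row and column, hence so does its curvature matrix $\Omega'$; and a skew-symmetric $2n\times 2n$ matrix with a zero row and column has Pfaffian $0$, because in the defining wedge $\alpha$ no $e_k$ occurs, so $\alpha^n$ lies in $\bigwedge^{2n}$ of a $(2n-1)$-dimensional space and vanishes. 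Thus $\Pf(\Omega'/2\pi) \equiv 0$ on $M_0$, and the transgression mechanism underlying Theorem~\ref{thm:inv_ind_con} produces a $(2n-1)$-form $\Phi = \Phi(\nabla,\nabla')$ on $M_0$ with $\diffd\Phi = \Pf(\Omega/2\pi)$. Excising geodesic balls $B_\varepsilon(p_i)$, Stokes' theorem gives
\[
	\int_{M \setminus \bigcup_i B_\varepsilon(p_i)} \Pf\!\left( \frac{\Omega}{2\pi} \right) = -\sum_{i=1}^r \int_{\partial B_\varepsilon(p_i)} \Phi ,
\]
and letting $\varepsilon \to 0$ replaces the left-hand side by $\int_M \Pf(\Omega/2\pi)$, since the integrand is a fixed smooth form on all of $M$ and the excised region has measure tending to zero.

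The crux --- and what I expect to be the main obstacle --- is the local boundary computation: proving that $-\int_{\partial B_\varepsilon(p_i)} \Phi \to \ind_{p_i}(V)$ as $\varepsilon\to 0$, \emph{with the exact normalization}. In geodesic normal coordinates around $p_i$ the metric is Euclidean to second order and $\nabla$ is flat to leading order, so the curvature contributions to $\Phi$ drop out in the limit and the integral reduces to the model situation on $\RR^{2n}$ with the flat connection, in which $\Phi$ is a constant multiple of the pullback of the normalized volume form of $\Sphere^{2n-1}$ under the Gauss map $x \mapsto x/\abs{x}$. The integral then computes the degree of $V/\abs{V}\colon \partial B_\varepsilon(p_i) \to \Sphere^{2n-1}$, i.e.\ $\ind_{p_i}(V)$ --- but only if the factor $1/(2\pi)^n$ in $\Pf(\Omega/2\pi)$ together with the $1/n!$ in the definition of the Pfaffian exactly cancels the volume $\abs{\Sphere^{2n-1}}$. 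Pinning down this constant is the genuinely delicate point, and it is precisely what forces the normalization $1/(2\pi)$ in the definition of $e(E)$.

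Assembling the pieces yields $\int_M e(TM) = \int_M \Pf(\Omega/2\pi) = \sum_i \ind_{p_i}(V) = \chi(M)$, which is the assertion. An alternative that bypasses the transgression entirely is the heat-equation proof: by the McKean--Singer formula $\chi(M) = \int_M \mathrm{str}\, k_t(x,x)\,\diffd x$ for the heat kernel $k_t$ of the Hodge Laplacian on differential forms and any $t>0$, together with Patodi's cancellation, which identifies $\lim_{t\to 0}\mathrm{str}\,k_t(x,x)$ with the Pfaffian integrand $\Pf(\Omega/2\pi)$. This trades the local degree computation for the small-time asymptotics of the heat kernel (Mehler's formula), and is arguably the cleaner modern route.
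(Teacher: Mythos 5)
The paper itself does not prove this theorem---it only cites Chern's original article---so there is no internal proof to compare against. Your outline is the standard intrinsic (transgression) proof, and its architecture is sound: representing $e(TM)$ by $\Pf(\Omega/2\pi)$ for the Levi--Civita connection, splitting $TM$ over the complement of the zero set of a vector field, observing that the adapted connection has identically vanishing Pfaffian form (your algebraic remark that a skew-symmetric matrix with a zero row and column has zero Pfaffian is correct), transgressing, and then combining Stokes with Poincar\'e--Hopf is exactly the classical route.

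However, as written this is a proof plan with a genuine gap rather than a proof, and you flag the gap yourself: the limit $-\int_{\partial B_\varepsilon(p_i)}\Phi \to \ind_{p_i}(V)$, with the exact constant, is asserted but not established. Essentially all of the content of Gau\ss--Bonnet--Chern is concentrated in that step: one must write down the transgression form $\Phi(\nabla,\nabla')$ explicitly (or at least its leading term near $p_i$), show that the curvature contributions vanish in the limit in normal coordinates, identify the surviving term as a constant multiple of the pullback of the volume form of $\Sphere^{2n-1}$ under $V/\abs{V}$, and check that the normalization $1/\bigl((2\pi)^n n!\bigr)$ coming from $\Pf(\Omega/2\pi)$ cancels exactly against $\mathrm{vol}(\Sphere^{2n-1})$---none of which is carried out. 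One economical way to close the gap: argue that the boundary limit is a \emph{universal} multiple of the local degree (same constant for all manifolds, metrics and zeros, by naturality and rescaling in normal coordinates), which gives $\int_M \Pf(\Omega/2\pi) = C_n\,\chi(M)$ for a dimensional constant $C_n$, and then pin down $C_n=1$ by evaluating on $\Sphere^{2n}$ (for $n=1$ this is precisely the computation of Section~\ref{sec:char:euler}); but the universality claim itself still requires an argument, and the orientation conventions in your Stokes step need checking so the sign of the index comes out right. The McKean--Singer/Patodi alternative you mention is likewise only named: it trades the unproved local degree computation for the unproved cancellation in the small-time heat kernel asymptotics, so it does not by itself make the argument complete.
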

The original proof by Chern can be found in~\cite{chern:gauss_bonnet}. The \Sage example in Section~\ref{sec:char:euler} provides an application of this theorem for the 2-sphere.

\paragraph{Index Theorems.}
It would be an injustice to omit mentioning the connection between characteristic classes and indices of certain elliptic operators on closed manifolds via the Atiyah--Singer index theorem~\cite{atiyah:asii}. In the previous paragraph, we already saw a special case of this index theorem: Gauß--Bonnet--Chern, cf.\ Theorem~\ref{thm:gauss_bonnet_chern}. Another important subcase is devoted to the Dirac operator:
\begin{mythm}[Atiyah--Singer]
	Suppose $M$ is a closed, even-dimensional Riemannian spin manifold and $E$ a complex vector bundle over $M$. Then the index of the classical twisted Dirac operator~$D$ satisfies:
	\begin{align}
		\ind(D) = \int_{M} \mathrm{ch}(E) \hat{A}(TM).\label{eq:dirac_index}
	\end{align}
\end{mythm}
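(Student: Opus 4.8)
The assertion is the Atiyah--Singer index theorem specialised to the (twisted) spin Dirac operator, and a self-contained proof lies well outside the scope of this thesis; see \cite{atiyah:asii} for the original argument. The approach I would sketch here is the heat-kernel proof, which fits the differential-geometric spirit of this chapter. \emph{Set-up.} Let $S = S^+ \oplus S^-$ be the complex spinor bundle of the even-dimensional Riemannian spin manifold $M$, equip $S \otimes E$ with the connection induced by the Levi-Civita connection and a chosen connection on $E$, and let $D = D^+ + D^-$ be the resulting twisted Dirac operator: it is formally self-adjoint, elliptic, and odd for the $\ZZ/2$-grading, so on the closed manifold $M$ the chirality part $D^+$ is Fredholm and $\ind(D) := \dim \ker D^+ - \dim \ker D^-$ is exactly the integer on the left of~\eqref{eq:dirac_index}.

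\emph{Reduction to the heat kernel.} The first key step is the McKean--Singer formula
\[
\ind(D) = \mathrm{str}\bigl( \expe^{-tD^2} \bigr) = \int_M \mathrm{str}\, k_t(x,x) \, \diffd x \qquad \text{for every } t > 0,
\]
where $k_t$ is the smooth kernel of $\expe^{-tD^2}$ and $\mathrm{str}$ is the difference of the fibrewise traces on $S^+ \otimes E$ and $S^- \otimes E$; it holds because $D$ intertwines the positive and negative $D^2$-eigenspaces for every nonzero eigenvalue, so only the kernel survives. Taking $t \to 0^+$, one uses the Lichnerowicz formula $D^2 = \nabla^*\nabla + \tfrac{s}{4} + c(F^E)$ (with $s$ the scalar curvature and $c(F^E)$ the Clifford action of the twisting curvature) together with a standard parametrix construction to obtain an asymptotic expansion $k_t(x,x) \sim (4\pi t)^{-n} \sum_{j \ge 0} t^{\,j}\, \Phi_j(x)$ in which each $\Phi_j$ is a universal polynomial in the curvatures of $M$ and of $E$.

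\emph{Getzler rescaling.} The heart of the proof is the evaluation of $\lim_{t \to 0^+} \mathrm{str}\, k_t(x,x)$. One rescales the Clifford generators together with the time parameter in Getzler's fashion; in the limit $D^2$ degenerates to a generalised harmonic oscillator whose heat kernel is given explicitly by Mehler's formula. Reading off the supertrace of this model kernel produces precisely the top-degree component of the closed form representing $\hat{A}(TM)\, \mathrm{ch}(E)$: the $\hat{A}$-factor is the Mehler kernel of the oscillator built from the Riemannian curvature, and the $\mathrm{ch}(E)$-factor is the exponential of the twisting curvature, matching the additive and multiplicative classes constructed in Section~\ref{sec:char:math}. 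Integrating this pointwise identity over $M$ and inserting it into the McKean--Singer formula yields~\eqref{eq:dirac_index}, since the integral of a closed form sees only its top-degree part.

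The main obstacle is the Getzler rescaling step: making the degeneration rigorous requires the right filtration on the bundle of Clifford modules, uniform estimates on the rescaled heat kernels so that the limit may be taken under the integral sign, and the explicit Mehler computation that pins the constant down so that the $\hat{A}$-series --- and not some other invariant power series in the Pontryagin forms --- appears. Everything else, namely ellipticity, Fredholmness, the McKean--Singer cancellation, and the existence and short-time asymptotics of the heat kernel, is standard elliptic theory on closed manifolds.
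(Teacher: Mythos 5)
Your sketch is correct in outline, but note that the thesis itself offers no proof of this theorem: it is stated as a known result and the reader is referred to the original paper \cite{atiyah:asii}, whose argument is the topological/K-theoretic one (pseudodifferential symbols, embedding into spheres, and the cobordism-flavoured axiomatics), not the heat-kernel route you describe. So you are taking a genuinely different path from the cited source. What your approach buys is exactly what fits this thesis: the McKean--Singer identity plus Getzler rescaling produces a \emph{local} index density, namely the top-degree part of the concrete Chern--Weil representative $\mathrm{ch}(E,\nabla^E)\,\hat{A}(TM,\nabla_g)$ built from curvature forms, which is the object the \Sage implementation actually computes; it is also the formulation that extends to the boundary-value and Lorentzian refinements mentioned in the text \cite{baer:math_chiral}. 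What the K-theoretic proof buys instead is generality (arbitrary elliptic operators, no spin or Dirac structure needed) at the price of losing the pointwise identification of the integrand. Two small points of care in your sketch: the heat-kernel prefactor should be $(4\pi t)^{-\dim M/2}$ (your exponent $-n$ is only right if you set $\dim M = 2n$, which you never fix), and the McKean--Singer cancellation argument needs $D$ to be essentially self-adjoint with discrete spectrum, which holds here precisely because $M$ is closed --- worth saying explicitly, since it is where compactness enters. As a proof proposal the plan is sound; all the hard analytic content (the rescaling filtration, uniform kernel estimates, Mehler's formula) is correctly identified as the crux rather than glossed over.
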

Here, $\hat{A}(TM)$ denotes the $\hat{A}$-class on the tangent bundle $TM$, cf. Table~\ref{tab:prominent_char}. In case of compact manifolds with boundary, the expression in~\eqref{eq:dirac_index} takes a slightly different form. In fact, the integral depends on the choice of the characteristic form and the Dirac operator in general fails to be Fredholm for an arbitrary boundary condition~\cite{atiyah-patodi-singer:aps001}. In the Lorentzian setting, Christian Bär and Alexander Strohmaier provide an index theorem for the Dirac operator on manifolds with compact spacelike Cauchy boundary~\cite{baer:math_chiral}. The formula still looks similar to~\eqref{eq:dirac_index}, but additional quantities are involved due to the boundary. This particular theorem plays a significant role in physical applications like chiral anomalies~\cite{baer:chiral}. In Section~\ref{sec:char:ch_char} and~\ref{sec:char:berger}, we present two examples within \Sage which are related to this theorem.
\begin{table}[t]
	\centering
	\def\arraystretch{1.4}
	\small
	\begin{tabular}{lcccl}
		\toprule
		\textbf{Class Name} & \textbf{Shortcut} & \textbf{Field} & \textbf{Type} & \textbf{Function} \\
		\midrule
		Chern & $c(E)$ & complex & multiplicative & $f(z) = 1+z$ \\ 
		Chern character & $\mathrm{ch}(E)$ & complex & additive & $f(z) = \exp(z)$ \\ 
		Todd & $\mathrm{Td}(E)$ & complex & multiplicative & $f(z) = \frac{z}{1-\expe^{-z}}$ \\ 
		Pontryagin & $p(E)$ & real & multiplicative & $g(z) = 1+z$ \\ 
		A-Hat & $\hat{A}(E)$ & real & multiplicative & $g(z) = \frac{\sqrt{z}/2}{\sinh(\sqrt{z}/2)}$ \\ 
		Hirzebruch & ${L}(E)$ & real & multiplicative & $g(z) = \frac{\sqrt{z}}{\tanh(\sqrt{z})}$ \\
		Euler & $e(E)$ & real & Pfaffian & $g(x) = x$ \\
		\bottomrule
	\end{tabular}
	\caption{Prominent characteristic classes on a vector bundle $E$.}\label{tab:prominent_char}
\end{table}

A whole bunch of other index formulas arises when using different kinds of characteristic classes. Although we omit details here, a list of the most prominent classes related to index theory can be found in Table~\ref{tab:prominent_char}.

\subsection{SageMath Implementation}
The idea of our implementation follows the subsequent maxim: a characteristic class in \Sage represents a \enquote{factory}, manufacturing its corresponding characteristic forms out of bundle connections.

Suppose $E$ is a vector bundle of rank $n$ over a manifold $M$, and consider a characteristic class $\kappa(E)$ of $E$, being one of the aforementioned types. Hence, we fix a suitable holomorphic function $g$ near zero, and let $P$ be the invariant polynomial on which the class is based. This is either the trace for \emph{additive classes}, the determinant for \emph{multiplicative classes} or the Pfaffian for \emph{Pfaffian classes}. Within \Sage, $\kappa(E)$ is represented by an instance of \texttt{CharacteristicClass}. This \Python class inherits from \texttt{UniqueRepresentation} and \texttt{SageObject}. It is uniquely determined by the vector bundle $E \to M$, the given class type resolving $P$ and a symbolic expression $g(x)$ reflecting the holomorphic function, but also by its name. The initialization is illustrated through the vertical axis in Figure~\ref{fig:char:general_workflow}. Thereby, we integrate a bunch of commonly used characteristic classes in \Sage, still following this construction procedure in the background. Table~\ref{tab:prominent_char} contains a list of all accessible characteristic classes implemented at this stage.

\begin{figure}[t]
	\centering
	\tikzstyle{arrow} = [thick,->,>=stealth]
	\tikzstyle{factory} = [rectangle, rounded corners, minimum width=3cm, minimum height=1cm,text centered, draw=black, fill=strcolor!50, thin]
	\tikzstyle{obj} = [rectangle, rounded corners, minimum width=3cm, minimum height=1cm,text centered, draw=black, fill=gray!30, thin]
	\begin{tikzpicture}[node distance=\textwidth / 3, >=latex', thick]
	\node (char-class) [factory, align=center] {{\scriptsize Characteristic class:} \\ $\kappa(E)$};
	\node (bundle) [obj, above of=char-class, align=center,yshift=-2.25cm] {{\scriptsize Vector Bundle:} \\ $E \to M$};
	\node (con) [obj, left of=char-class, align=center] {{\scriptsize Bundle connection:} \\ $\nabla$};
	\node (char-form) [obj, right of=char-class, align=center] {{\scriptsize Characteristic form:} \\ $\kappa(E,\nabla)$};
	\node (sym-expr) [obj, below of=char-class, align=center,yshift=.9cm] {{\scriptsize Symbolic expression:} \\ $g(x)$};
	\node (sign) [obj, below of=char-class, align=center,yshift=2.25cm] {{\scriptsize Class type:} \\ {\tiny \texttt{'multiplicative'} , \dots}};
	\node (dict) [obj, dashed, below of=con, align=center,opacity=.7,yshift=2.5cm] {{\scriptsize Dict. of curv. matr.:} \\ \scriptsize$\left\{ e_1 : \Omega_1 , \ldots , e_m : \Omega_m \right\}$};
	
	\draw [arrow,kwcolor] (bundle) -- (char-class) node[midway,right,kwcolor] {\small \itshape init};
	\draw [arrow,dashed,opacity=.7] (con) -- (dict);
	\draw [arrow,ccolor,dashed,opacity=.7] (dict) -- (char-class.west) node[midway,below,sloped,ccolor,opacity=.7] {\small \itshape in};
	\draw [arrow,ccolor] (char-class) -- (char-form) node[midway,above,sloped,ccolor] {\small \itshape out};
	\draw [arrow,kwcolor] (bundle.west) -- (con.north) node[midway,above,sloped,kwcolor] {\small \itshape init};
	\draw [arrow,ccolor] (con) -- (char-class) node[midway,above,ccolor,sloped] {\small \itshape in};
	\draw [arrow,kwcolor] (sign) -- (char-class) node[midway,right,kwcolor] {\small \itshape init};
	\draw [kwcolor] (sym-expr) -- (sign);
	\end{tikzpicture}
	\caption{In the vertical axis we see the initialization dependencies, while the horizontal axis illustrates the \enquote{factory} behavior of a characteristic class in \Sage.}\label{fig:char:general_workflow}
\end{figure}
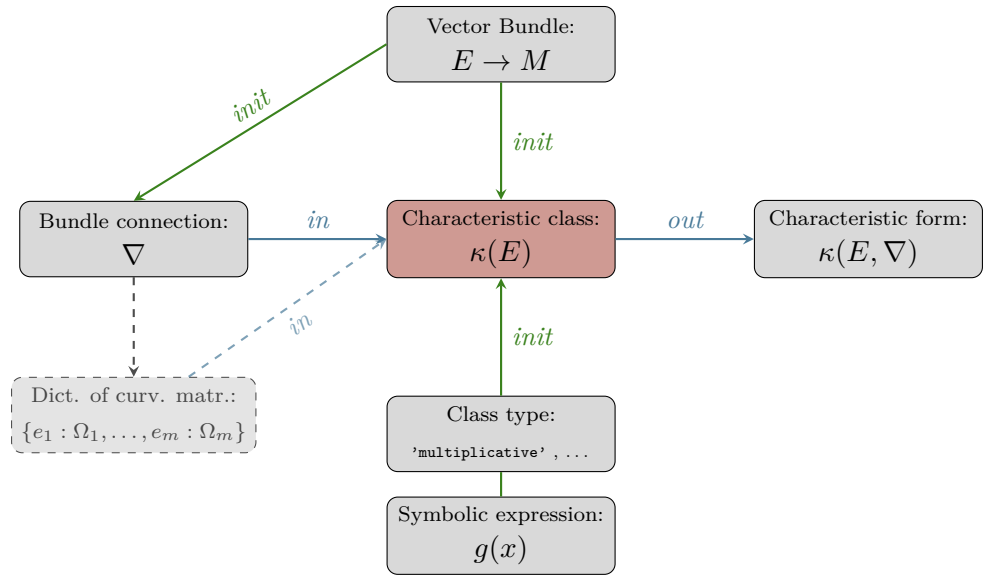

We use the implemented method \texttt{get\_form} to initiate the building process by taking a bundle connection $\nabla$ on $E$. Usually, our \Sage implementation tries to compute its curvature forms automatically and puts them into a \Python\ dictionary to eventually assemble the characteristic form. The dictionary is of the form ${\{ e_1 : \Omega_1, \ldots, e_m : \Omega_m \}}$ and consists of curvature form matrices with their associated local frames as keys.\footnote{The choice using local frames as keys is principally motivated from mathematics. This has the advantage that it might be useful for internal checks or further computations in the future.} This automatic computation, however, is unwanted at some point. For instance, assume the curvature is barely computable but already known. Or suppose the class is of Pfaffian type where the curvature form matrices must be considered skew-symmetric at this stage. For that purpose, our implementation allows an optional input of such dictionaries by hand. Though, in order to allocate the characteristic form appropriately, stating a connection is still compulsory. The general workflow is illustrated on the horizontal axis of Figure~\ref{fig:char:general_workflow}. Notice that the result is stored because long computation times are usually expected.

\paragraph{Detailed Workflow.}
We want to take a further look at the workflow and unravel it in detail. Figure~\ref{fig:char:specific_workflow} provides a sketch of how characteristic forms are constructed in our \Sage implementation. As we can see, the diagram splits in two main parts. 

\begin{figure}[t]
	\centering
	\tikzstyle{arrow} = [thick,->,>=stealth]
	\tikzstyle{factory} = [rectangle, rounded corners, minimum width=3cm, minimum height=1cm,text centered, draw=black, fill=strcolor!50, thin]
	\tikzstyle{obj} = [rectangle, rounded corners, minimum width=3cm, minimum height=1cm,text centered, draw=black, fill=gray!30, thin]
	\pgfdeclarelayer{background}
	\pgfdeclarelayer{foreground}
	\pgfsetlayers{background,main,foreground}
	\begin{tikzpicture}[node distance=\textwidth / 8, >=latex', thick]
	\node (sym-expr) [obj, align=center] {{\scriptsize Symbolic expression:} \\ $\displaystyle g(x)$};
	\node (sym-expr-mod) [obj, below of=sym-expr, align=center] {{\scriptsize Symbolic expression:} \\ $\displaystyle f(x)$};
	\node (taylor) [obj, below of=sym-expr-mod, align=center] {{\scriptsize List of coeff.:} \\ $\left[c_0, \ldots, c_k \right]$};
	\node (insert) [obj, below of=taylor, align=center,yshift=-.9cm] {{\scriptsize $\Omega^{2*}\!(U_\ell)$-valued matrix:} \\ $\scriptstyle f\left(\frac{\Omega_\ell}{2 \pi \varepsilon}\right) = c_0 1 + \ldots + c_{k} \left(\frac{\Omega_\ell}{2 \pi \varepsilon} \right)^{\!k}$};
	\node (inv-poly) [obj, below of=insert, align=center,yshift=-.25cm] {{\scriptsize Mixed form in $\Omega^{2*}\!(U_\ell)$:} \\ $\scriptstyle P\left(f\left(\frac{\Omega_\ell}{2 \pi \varepsilon}\right)\right)$};
	
	\begin{scope}[on background layer]
	\tikzset{loop/.style={draw,dashed,gray,rounded corners,fill=kwcolor!30,inner sep=10pt}}
	\tikzset{init/.style={draw,dashed,gray,rounded corners,fill=strcolor!30,inner sep=10pt}}
	\node[loop,fit=(insert) (inv-poly)] (l-box) {};
	\node[init,fit=(sym-expr) (taylor)] (i-box) {};
	\end{scope}
	
	\node (dict) [obj, right of=l-box, align=center,xshift=4cm] {{\scriptsize Dict. of curv. matr.:} \\ $\scriptstyle\left\{ e_1 \, : \, \Omega_1 , \, \ldots \, , \, e_m \, : \, \Omega_m \right\}$};
	
	\node (result) [obj, below of=inv-poly, align=center,yshift=-.5cm] {{\scriptsize Mixed form in $\Omega^{2*}\!(M)$:} \\ $\kappa(E, \nabla)$};
	
	\node [left of=i-box,darkgray,xshift=-1cm] {(1)};
	\node [left of=l-box,darkgray,xshift=-1.5cm] {(2)};
	
	\draw [arrow] (sym-expr) -- (sym-expr-mod) node[midway,right,darkgray] {\small \itshape transform};
	\draw [arrow] (sym-expr-mod) -- (taylor) node[midway,right,darkgray] {\small \itshape Taylor};
	\draw [arrow] (taylor) -- (insert) node[midway,right,darkgray] {\small \itshape functional calculus};
	\draw [arrow] (dict) -- (l-box) node[midway,above,darkgray] {\small\itshape for-loop} node[midway,below,darkgray] {$\scriptstyle \ell=1,\ldots, m$};
	\draw [arrow] (insert) -- (inv-poly) node[midway,right,darkgray] {\small \itshape apply $P$};
	\draw [arrow] (inv-poly) -- (result) node[midway,right,yshift=-5pt,darkgray] {\small \itshape set restriction};
	\end{tikzpicture}
	\caption{An illustration of the construction algorithm of a characteristic form in \Sage, starting from a symbolic expression and a dictionary of curvature matrices.}\label{fig:char:specific_workflow}
\end{figure}
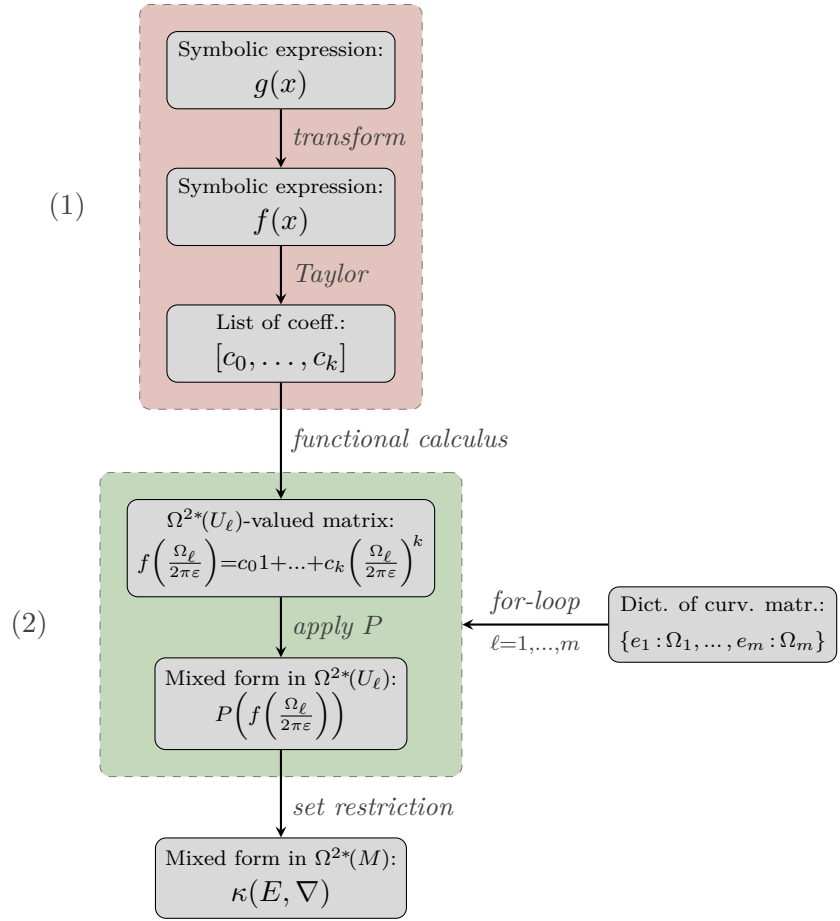

Figure~\ref{fig:char:specific_workflow}\,(1) still belongs to the initialization process of the characteristic class. Here, the given symbolic expression $g(x)$ undergoes a transformation into a new symbolic expression $f(x)$.\footnote{In fact, a new symbolic variable on the complex domain is established at this point. This precaution is due to the fact that symbolic variables are globally defined throughout an entire \Sage session.} The transformation depends on the underlying field and class type. We simply have $f(x)=g(x)$ if $E$ is complex. However, the real case is more complicated as discussed above; a list is provided in Table~\ref{tab:trafo_expr}. Afterwards, the Taylor expansion is computed at $x=0$. Fortunately, \Sage already comes with suitable Taylor expansion algorithms. Since every $2$-form-valued matrix is nilpotent of degree $\left\lceil \frac{n}{2} \right\rceil$, it is reasonable to consider expansions only up to $k = \left\lfloor \frac{n}{2} \right\rfloor$. Finally, the coefficients are stored in a \Python\ list $[c_0, \ldots, c_k]$ ready for use.

\begin{table}[t]
	\centering
	\def\arraystretch{1.4}
	\begin{tabular}{ll}
		\toprule
		\textbf{Class Type} & \textbf{Transformation}\\
		\midrule
		multiplicative & $f(x) = \sqrt{g(x^2)}$ \\
		additive & $f(x) = \frac{1}{2} g(x^2)$ \\
		Pfaffian & $f(x) = \frac{g(x)-g(-x)}{2}$ \\
		\bottomrule
	\end{tabular}
	\caption{All transformations $g(x) \to f(x)$ depending on the class type for vector bundles over the real field.}\label{tab:trafo_expr}
\end{table}

Figure~\ref{fig:char:specific_workflow}\,(2) illustrates the input/output machinery of the characteristic class invoked by the method \texttt{get\_form}. Once the \Python\ dictionary ${\{e_1 : \Omega_1, \ldots, e_m : \Omega_m \}}$ is given, a for-loop is established iterating over each pair $(e_\ell, \Omega_\ell)$ for $\ell=1, \ldots, m$. If the curvature form matrix $\Omega_\ell$ has not been converted into a proper algebraic \texttt{Sage} element yet, i.\,e.\ a generic matrix over the algebra $\Omega^*(U_\ell)$, then it is done here. We denote by $U_\ell$ the domain of the local frame~$e_\ell$. By using the coefficients $[c_0, \ldots, c_k]$ established during the initialization process, the functional calculus is applied: $$f\!\left(\frac{\Omega_\ell}{2 \pi \varepsilon}\right) = c_0 1 + c_1 \frac{\Omega_\ell}{2 \pi \varepsilon} + \ldots + c_k \left(\frac{\Omega_\ell}{2 \pi \varepsilon}\right)^{\!k}.$$ The placeholder $\varepsilon$ depends on the given class type. If $\kappa(E)$ is of Pfaffian type, we obtain $\varepsilon=1$. In any other case we get $\varepsilon=i$. Next the invariant polynomial $P$ is evaluated at the result. \Sage luckily supports determinants, traces and Pfaffians of matrices over arbitrary (commutative) rings. Hence, we end up with a closed form $P\left(f\!\left(\frac{\Omega_\ell}{2 \pi \varepsilon}\right)\right)$ living in $\Omega^{2*}(U_\ell) \subset \Omega^{*}(U_\ell)$. Finally representing the characteristic form, these iterated results are glued together in the mixed differential form $\kappa(E, \nabla) \in \Omega^{2*}(M) \subset \Omega^{*}(M)$ by using the method \texttt{set\_restriction} (cf. Section~\ref{sec:mixed:impl}).

\subsection{Example: Chern Character over Minkowski Space}\label{sec:char:ch_char}
We want to illustrate the usage of characteristic classes within \Sage by computing the Chern character form $\mathrm{ch}(E, \nabla^E)$ on a complex trivial line bundle $E$ over the 2-dimensional Minkowski space $M$ equipped with a bundle connection $\nabla^E$. We start with the general setup:
\begin{NBin}
M = Manifold(2, 'M', structure='Lorentzian')
X.<t,x> = M.chart()
E = M.vector_bundle(1, 'E', field='complex'); print(E)
\end{NBin}
\begin{NBout}
Differentiable complex vector bundle E -> M of rank 1 over the base space 2-dimensional Lorentzian manifold M
\end{NBout}
To trivialize the vector bundle $E$, we fix a global frame $e$:
\begin{NBin}
e = E.local_frame('e') # trivialize
\end{NBin}
Let us declare an $\mathrm{U}(1)$-connection $\nabla^E$ on $E$ given by an electromagnetic potential $A(t)$:
\begin{NBin}
nab = E.bundle_connection('nabla^E', latex_name=r'\nabla^E')
A = function('A')
\end{NBin}
The corresponding connection form $\omega$ turns out as:
\begin{NBin}
omega = M.one_form(name='omega', latex_name=r'\omega')
omega[1] = I*A(t)
omega.display()
\end{NBin}
\begin{NBoutM}
$\omega = i \, A\left(t\right) \mathrm{d} x$
\end{NBoutM}
Let us put this into the connection:
\begin{NBin}
nab.set_connection_form(0, 0, omega)
\end{NBin}
Notice that the Chern character $\mathrm{ch}(E)$ is already predefined in the system. We can get it by the following command:
\begin{NBin}
ch = E.characteristic_class('ChernChar'); print(ch)
\end{NBin}
\begin{NBout}
Characteristic class ch of additive type associated to e^x on the Differentiable complex vector bundle E -> M of rank 1 over the base space 2-dimensional Lorentzian manifold M
\end{NBout}
The computation of the corresponding Chern character form $\mathrm{ch}(E,\nabla^E)$ can be invoked by the method \texttt{get\_form}:
\begin{NBin}
ch_form = ch.get_form(nab)
ch_form.display_expansion()
\end{NBin}
\begin{NBoutM}
$\mathrm{ch}(E, \nabla^E) = \left[ 1 \right]_0 + \left[ 0 \right]_1 + \left[ \frac{\frac{\partial\,A}{\partial t}}{2 \, \pi} \mathrm{d} t\wedge \mathrm{d} x \right]_2$
\end{NBoutM}
We can see that the resulting 2-form coincides with the Faraday tensor divided by~$2 \pi$ as expected.

\subsection{Example: Chern Class of the Tautological Line Bundle}\label{sec:char:taut}
This essential example is primarily adopted from \cite[Beispiel~1.17]{baer:charakteristisch} and resembles the example we discuss in Section~\ref{sec:vec:moebius}. Before we start with the code, we define the \defstyle{tautological line bundle} over the complex projective space $\CP^n$:
\begin{align*}
	\gamma_n := \left\{ (v, \ell) \in \CC^{n+1} \times \CP^n ~|~ v \in \ell \cup \{0\} \right\},
\end{align*}
together with the footpoint map $\pi:\gamma_n \to \CP^n$ given by $(v, \ell) \mapsto \ell$. For now, we restrict to the case $n=1$. We choose homogeneous coordinates $$\CP^1=\left\{ [z_0:z_1] ~|~ (z_0,z_1) \in \CC^2 \setminus \{(0,0)\} \right\}$$ and define $U:= \CP^1 \setminus \left\{ [1:0] \right\}$ so that $\CC \to U$ given by $z \mapsto [z:1]$ defines a diffeomorphism. We start our computation by initializing the complex projective space as 2-dimensional real manifold with coordinates on $U$:
\begin{NBin}
M = Manifold(2, 'CP^1', start_index=1)
U = M.open_subset('U')
c_cart.<x,y> = U.chart() # [1:x+I*y]
\end{NBin}
For the sake of convenience, we additionally declare the complex coordinates $z$ and $\bar{z}$ on $U$:
\begin{NBin}
c_comp.<z, zbar> = U.chart(r'z:z zbar:\bar{z}')
cart_to_comp = c_cart.transition_map(c_comp, (x+I*y, x-I*y))
cart_to_comp.display()
\end{NBin}
\begin{NBoutM}
$\left\{\begin{array}{lcl} {z} & = & x + i \, y \\ {\bar{z}} & = & x - i \, y \end{array}\right.$
\end{NBoutM}
\begin{NBin}
comp_to_cart = cart_to_comp.inverse()
comp_to_cart.display()
\end{NBin}
\begin{NBoutM}
$\left\{\begin{array}{lcl} x & = & \frac{1}{2} \, {z} + \frac{1}{2} \, {\bar{z}} \\ y & = & -\frac{1}{2} i \, {z} + \frac{1}{2} i \, {\bar{z}} \end{array}\right.$
\end{NBoutM}
Now, we are ready to construct the tautological line bundle $\gamma_1$:
\begin{NBin}
E = M.vector_bundle(1, 'gamma_1', 
					latex_name=r'\gamma_1', 
		    		field='complex')
\end{NBin}
Furthermore, we declare a local frame $e$ on $U$ naturally given by ${[z:1] \mapsto \begin{smallpmatrix} z \\ 1 \end{smallpmatrix}}$:
\begin{NBin}
e = E.local_frame('e', domain=U)
\end{NBin}
To compute the Chern class, we still need a connection. The tautological line bundle inherits a Hermitian metric from the overlying trivial bundle $\CC^2 \times \CP^1$. Supposing $\pi_\ell: \CC^2 \to \ell \cup \{0\}$ denotes the orthogonal projection onto the complex line $\ell \in \CP^1$, the induced connection is given by
\begin{align*}
	\nabla_X \sigma = \left( \pi_\ell(\partial_X \nu ), \ell \right)
\end{align*}
for each $X \in T \CP^1$ and each section $\sigma: U \to \gamma_1$ written as $\ell \mapsto (\nu(\ell), \ell)$. Inserting our local frame $e$, we observe:
\begin{align*}
	\nabla_{\frac{\partial}{\partial \bar{z}}}\, e = 0, \qquad
	\nabla_{\frac{\partial}{\partial z}} e = \frac{\bar{z}}{1+ \abs{z}^2} \, e.
\end{align*}
Hence, we have obtained a suitable connection:
\begin{NBin}
nab = E.bundle_connection('nabla', latex_name=r'\nabla')
omega = U.one_form(name='omega')
omega[c_comp.frame(), 1, c_comp] = zbar/(1+z*zbar)
nab.set_connection_form(1, 1, omega, frame=e)
\end{NBin}
It is time to initialize $c(\gamma_1)$. Fortunately, \Sage already knows the notion Chern classes:
\begin{NBin}
c = E.characteristic_class('Chern'); print(c)
\end{NBin}
\begin{NBout}
Characteristic class c of multiplicative type associated to x + 1 on the Differentiable complex vector bundle gamma_1 -> CP^1 of rank 1 over the base space 2-dimensional differentiable manifold CP^1
\end{NBout}
Let us execute the algorithm:
\begin{NBin}
c_form = c.get_form(nab)
c_form.display_expansion(c_comp.frame(), chart=c_comp)
\end{NBin}
\begin{NBoutM}
$c(\gamma_1, \nabla) = \left[ 1 \right]_0 + \left[ 0 \right]_1 + \left[ \frac{i}{2 \, {\left(\pi + \pi {z}^{2} {\bar{z}}^{2} + 2 \, \pi {z} {\bar{z}}\right)}} \mathrm{d} {z}\wedge \mathrm{d} {\bar{z}} \right]_2$
\end{NBoutM}
Since this particular representation is defined outside a set of measure zero, we can compute its integral over $\CP^1$ in real coordinates:
\begin{NBin}
integrate(integrate(c_form[2][[1,2]].expr(c_cart), x, -infinity, infinity).full_simplify(), y, -infinity, infinity)
\end{NBin}
\begin{NBoutM}
$1$
\end{NBoutM}
The result shows that $c_1(\gamma_1)$ generates the second integer cohomology $H^2(\CP^1, \ZZ)$. Assume $\iota: \CP^1 \hookrightarrow \CP^n$ is the canonical embedding of projective lines into $\CC^{n+1}$. Then we observe $\iota^* \gamma_n = \gamma_1$, and due to the naturality condition $\iota^* c_1(\gamma_n) = c_1(\gamma_1) \neq 0$. In fact, by applying the Gysin sequence, one can conclude that $c_1(\gamma_n)$ even generates the cohomology ring $H^*(\CP^n, \ZZ)$ for each $n\in\NN$, see~\cite[Thm.~14.4]{milnor:char_classes}.

The preceding example plays an important role in the context of characteristic classes. Namely, if $E$ is a complex line bundle over some manifold $M$, there exists a continuous map $f : M \to \CP^n$ such that $E \cong f^* \gamma_n$, supposing $n$ is sufficiently large.\footnote{This particular statement is a corollary of \cite[Thm.~7.1, p.~77]{walschap:metric_structures} where the proof can be easily adapted to the complex field. Notice that there is a more general result~\cite[Thm.~5.6]{milnor:char_classes} regarding vector bundles over arbitrary paracompact spaces.} Once such a map is known, all characteristic classes of $E$ are completely determined by~$\gamma_n$.

\subsection{Example: Euler Class of $\mathbb{S}^2$}\label{sec:char:euler}
In this example, we want to compute the Euler class of the 2-sphere $\mathbb{S}^2 \subset \RR^3$. As usual, we cover $\Sphere^2$ by two parallelizable open subsets $U:=\Sphere^2 \setminus \{(0,0,1)\}$ and $V:=\Sphere^2 \setminus \{(0,0,-1)\}$ for which the point $(0,0,1)$ is identified with the north pole. We define stereographic coordinates in \Sage: 
\begin{NBin}
M = Manifold(2, name='S^2', latex_name=r'\mathbb{S}^2', 
			 structure='Riemannian', start_index=1)
U = M.open_subset('U') ; V = M.open_subset('V')
M.declare_union(U,V)   # M is the union of U and V
stereoN.<x,y> = U.chart()
stereoS.<xp,yp> = V.chart("xp:x' yp:y'")
N_to_S = stereoN.transition_map(stereoS,
								(x/(x^2+y^2), y/(x^2+y^2)),
								intersection_name='W',
								restrictions1= x^2+y^2!=0,
								restrictions2= xp^2+yp^2!=0)
S_to_N = N_to_S.inverse()
\end{NBin}
Next we define the tangent bundle and its local frames induced by the charts:
\begin{NBin}
eU = stereoN.frame(); eV = stereoS.frame()
TM = M.tangent_bundle()
\end{NBin} 
The Euler class is also one of the predefined classes. Thus, we can easily get it from the tangent bundle's instance:
\begin{NBin}
e_class = TM.characteristic_class('Euler'); print(e_class)
\end{NBin}
\begin{NBout}
Characteristic class e of Pfaffian type associated to x on the Tangent bundle TS^2 over the 2-dimensional Riemannian manifold S^2
\end{NBout}
To compute a form representing the Euler class, we need to state a suitable connection first. Here, we want to use the Levi--Civita connection induced by the standard metric. This is simply given by the pullback of the Euclidean scalar product of the ambient space $\RR^3$ along the canonical embedding $\iota: \Sphere^2 \hookrightarrow \RR^3$. Let us define the ambient space $\RR^3$ and its Euclidean scalar product $h$:
\begin{NBin}
E = Manifold(3, 'R^3', latex_name=r'\mathbb{R}^3', start_index=1)
cart.<X,Y,Z> = E.chart()
h = E.metric('h')
h[1,1], h[2,2], h[3, 3] = 1, 1, 1
h.display()
\end{NBin}
\begin{NBoutM}
$h = \mathrm{d} X\otimes \mathrm{d} X+\mathrm{d} Y\otimes \mathrm{d} Y+\mathrm{d} Z\otimes \mathrm{d} Z$
\end{NBoutM}
On that account, we declare the embedding $\iota: \Sphere^2 \hookrightarrow \RR^3$ in stereographic coordinates when one considers its projection from the north pole $(0, 0, 1)$ to the equatorial plane~${Z=0}$:
\begin{NBin}
iota = M.diff_map(E, {(stereoN, cart): 
					  [2*x/(1+x^2+y^2), 2*y/(1+x^2+y^2),
					  (1-x^2-y^2)/(1+x^2+y^2)],
					  (stereoS, cart): 
					  [2*xp/(1+xp^2+yp^2), 2*yp/(1+xp^2+yp^2),
					  (xp^2+yp^2-1)/(1+xp^2+yp^2)]},
				  name='iota', latex_name=r'\iota')
iota.display()
\end{NBin}
\begin{NBoutM}
$\begin{array}{llcl} \iota:& \mathbb{S}^2 & \longrightarrow & \mathbb{R}^3 \\ \mbox{on}\ U : & \left(x, y\right) & \longmapsto & \left(X, Y, Z\right) = \left(\frac{2 \, x}{x^{2} + y^{2} + 1}, \frac{2 \, y}{x^{2} + y^{2} + 1}, \frac{x^{2} + y^{2} - 1}{x^{2} + y^{2} + 1}\right) \\ \mbox{on}\ V : & \left({x'}, {y'}\right) & \longmapsto & \left(X, Y, Z\right) = \left(\frac{2 \, {x'}}{{x'}^{2} + {y'}^{2} + 1}, \frac{2 \, {y'}}{{x'}^{2} + {y'}^{2} + 1}, -\frac{{x'}^{2} + {y'}^{2} - 1}{{x'}^{2} + {y'}^{2} + 1}\right) \end{array}$
\end{NBoutM}
We can define the standard metric $g$ on $\Sphere^2$ by setting it as the pullback metric~$\iota^*h$:
\begin{NBin}
g = M.metric()
g.set(iota.pullback(h))
g[1,1].factor(); g[2,2].factor() # simplifications
g.display()
\end{NBin}
\begin{NBoutM}
$g = \frac{4}{{\left(x^{2} + y^{2} + 1\right)}^{2}} \mathrm{d} x\otimes \mathrm{d} x + \frac{4}{{\left(x^{2} + y^{2} + 1\right)}^{2}} \mathrm{d} y\otimes \mathrm{d} y$
\end{NBoutM}
The corresponding Levi--Civita connection is computed automatically:
\begin{NBin}
nab = g.connection()
\end{NBin}
Since we have found the desired Levi--Civita connection, we want to compute the associated curvature forms and store them in a \Python\ list:
\begin{NBin}
cmatrix_U = [[nab.curvature_form(i,j,eU) for j in TM.irange()]
              for i in TM.irange()]
cmatrix_V = [[nab.curvature_form(i,j,eV) for j in TM.irange()]
              for i in TM.irange()]
\end{NBin}
Fortunately, the curvature form matrices are already skew-symmetric:
\begin{NBin}
for i in range(TM.rank()):
    for j in range(TM.rank()):
        show(cmatrix_U[i][j].display())
\end{NBin}
\begin{NBoutM}
$\Omega^1_{\ \, 1} = 0 \\
\Omega^1_{\ \, 2} = \left( \frac{4}{x^{4} + y^{4} + 2 \, {\left(x^{2} + 1\right)} y^{2} + 2 \, x^{2} + 1} \right) \mathrm{d} x\wedge \mathrm{d} y \\
\Omega^2_{\ \, 1} = \left( -\frac{4}{x^{4} + y^{4} + 2 \, {\left(x^{2} + 1\right)} y^{2} + 2 \, x^{2} + 1} \right) \mathrm{d} x\wedge \mathrm{d} y \\
\Omega^2_{\ \, 2} = 0$
\end{NBoutM}
\begin{NBin}
for i in range(TM.rank()):
	for j in range(TM.rank()):
		show(cmatrix_V[i][j].display())
\end{NBin}
\begin{NBoutM}
$\Omega^1_{\ \, 1} = 0 \\
\Omega^1_{\ \, 2} = \left( \frac{4}{{x'}^{4} + {y'}^{4} + 2 \, {\left({x'}^{2} + 1\right)} {y'}^{2} + 2 \, {x'}^{2} + 1} \right) \mathrm{d} {x'}\wedge \mathrm{d} {y'} \\
\Omega^2_{\ \, 1} = \left( -\frac{4}{{x'}^{4} + {y'}^{4} + 2 \, {\left({x'}^{2} + 1\right)} {y'}^{2} + 2 \, {x'}^{2} + 1} \right) \mathrm{d} {x'}\wedge \mathrm{d} {y'} \\
\Omega^2_{\ \, 2} = 0$
\end{NBoutM}
Hence, we can put them into a dictionary and apply the algorithm:
\begin{NBin}
cmatrices = {eU: cmatrix_U, eV: cmatrix_V}
e_class_form = e_class.get_form(nab, cmatrices)
e_class_form.display_expansion()
\end{NBin}
\begin{NBoutM}
$e(T\mathbb{S}^2, \nabla_g) = \left[ 0 \right]_0 + \left[ 0 \right]_1 + \left[ \left( \frac{2}{\pi + \pi x^{4} + \pi y^{4} + 2 \, \pi x^{2} + 2 \, {\left(\pi + \pi x^{2}\right)} y^{2}} \right) \mathrm{d} x\wedge \mathrm{d} y \right]_2$
\end{NBoutM}
We want to compute the Euler characteristic of $\Sphere^2$ now. Due to Theorem~\ref{thm:gauss_bonnet_chern}, this can be achieved by integrating the top form over $\Sphere^2$. Since $U$ and $\Sphere^2$ differ only by a point, and therefore a set of measure zero, it is sufficient to integrate over the subset $U$:
\begin{NBin}
integrate(integrate(e_class_form[2][[1,2]].expr(), x, -infinity, infinity).simplify_full(), y, -infinity, infinity)
\end{NBin}
\begin{NBoutM}
$2$
\end{NBoutM}
Thus, we have obtained the Euler characteristic of $\mathbb{S}^2$.

\subsection{Example: $\hat{A}$-Class of Lorentzian Foliation of Berger Spheres}\label{sec:char:berger}
We consider the space of unit quaternions $\Sphere^3 \subset \RR^4 \cong \HH$, where $\HH$ is endowed with the canonical basis $(\mathbf{1}, \mathbf{i}, \mathbf{j}, \mathbf{k})$. Note that the product $qp$ is tangent to $\Sphere^3$ for each $q \in \Sphere^3$ whenever $p$ is imaginary. This gives rise to the following vector fields on~$\Sphere^3$:
\begin{align*}
	\restrict{\varepsilon_1}{q} = q \,\mathbf{i}, \quad \restrict{\varepsilon_2}{q} = q \,\mathbf{j}, \quad \restrict{\varepsilon_3}{q} = q \,\mathbf{k}.
\end{align*}
It can be shown that $(\varepsilon_1, \varepsilon_2, \varepsilon_3)$ is a global vector frame and hence $\Sphere^3$ is a parallelizable manifold.

We introduce the following smooth family of the so-called \defstyle{Berger metrics}:
\begin{align*}	
	g_t = a(t)^{2} \; \varepsilon^{1}\otimes \varepsilon^{1} +\varepsilon^{2}\otimes \varepsilon^{2} +\varepsilon^{3}\otimes \varepsilon^{3}.
\end{align*}
Here, $a(t)$ is a smooth positive function in $t \in \RR$ and $\varepsilon^i$ denotes the vector field dual to $\varepsilon_i$. This family can be used to define a globally hyperbolic manifold $M=\RR \times \Sphere^3$ equipped with the Lorentzian metric $g = - {\diffd t}^2 + g_t$. Hence, $M$ is foliated by Berger spheres.

In the following, we compute the $\hat{A}$-form of the corresponding Levi--Civita connection $\nabla_g$. Even though the $\hat{A}$-class vanishes, its characteristic form still plays an significant role in the index theory of the classical Dirac operator when one considers spacelike Cauchy boundary~\cite{baer:math_chiral}. We start the computation by declaring the Lorentzian manifold first:
\begin{NBin}
M = Manifold(4, 'M', structure='Lorentzian'); print(M)
\end{NBin}
\begin{NBout}
4-dimensional Lorentzian manifold M
\end{NBout}
We cover $M$ by two open subsets defined as $U := \RR \times \left( \Sphere^3 \setminus \{ -\mathbf{1} \} \right)$ and $V := \RR \times \left( \Sphere^3 \setminus \{ \mathbf{1} \} \right)$:
\begin{NBin}
U = M.open_subset('U'); V = M.open_subset('V')
M.declare_union(U,V)
\end{NBin}
We need to impose coordinates on $M$ and use stereographic projections with respect to the foliated 3-sphere:
\begin{NBin}
stereoN.<t,x,y,z> = U.chart()
stereoS.<tp,xp,yp,zp> = V.chart("tp:t' xp:x' yp:y' zp:z'")
N_to_S = stereoN.transition_map(stereoS,
								(t, x/(x^2+y^2+z^2), 
									y/(x^2+y^2+z^2), 
									z/(x^2+y^2+z^2)),
								intersection_name='W',
								restrictions1= x^2+y^2+z^2!=0,
								restrictions2= xp^2+yp^2+zp^2!=0)
W = U.intersection(V)
S_to_N = N_to_S.inverse()
N_to_S.display()
\end{NBin}
\begin{NBoutM}
$\left\{\begin{array}{lcl} {t'} & = & t \\ {x'} & = & \frac{x}{x^{2} + y^{2} + z^{2}} \\ {y'} & = & \frac{y}{x^{2} + y^{2} + z^{2}} \\ {z'} & = & \frac{z}{x^{2} + y^{2} + z^{2}} \end{array}\right.$
\end{NBoutM}
From above, we know that $M$ admits a global frame $(\varepsilon_0, \varepsilon_1, \varepsilon_2, \varepsilon_3)$:
\begin{NBin}
E = M.vector_frame('E', latex_symbol=r'\varepsilon')
E_U = E.restrict(U); E_U
\end{NBin}
\begin{NBoutM}
$\left(U, \left(\varepsilon_{0},\varepsilon_{1},\varepsilon_{2},\varepsilon_{3}\right)\right)$
\end{NBoutM}
The vector field $\varepsilon_0$ is simply given by $\frac{\partial}{\partial t}$. To obtain the global vector frame $(\varepsilon_1, \varepsilon_2, \varepsilon_3)$ on $\Sphere^3$ in stereographic coordinates, a computation within \Sage is performed in the \texttt{Jupyter} notebook \enquote{Sphere S3: vector fields and left-invariant parallelization} downloadable from~\cite{gour:examples}. This is done by embedding $\Sphere^3$ into $\RR^4 \cong \HH$, endowing it with the quaternionic structure. This eventually leads to:
\begin{NBin}
E_U[0][:] = [1,0,0,0]
E_U[1][:] = [0, (x^2-y^2-z^2+1)/2, x*y+z, x*z-y]
E_U[2][:] = [0, x*y-z, (1-x^2+y^2-z^2)/2, x+y*z]  
E_U[3][:] = [0, x*z+y, y*z-x, (1-x^2-y^2+z^2)/2]
\end{NBin}
\begin{NBin}
for i in M.irange():
	show(E_U[i].display())
\end{NBin}
\begin{NBoutM}
$\varepsilon_{0} = \frac{\partial}{\partial t } \\
\varepsilon_{1} = \left( \frac{1}{2} \, x^{2} - \frac{1}{2} \, y^{2} - \frac{1}{2} \, z^{2} + \frac{1}{2} \right) \frac{\partial}{\partial x } + \left( x y + z \right) \frac{\partial}{\partial y } + \left( x z - y \right) \frac{\partial}{\partial z } \\
\varepsilon_{2} = \left( x y - z \right) \frac{\partial}{\partial x } + \left( -\frac{1}{2} \, x^{2} + \frac{1}{2} \, y^{2} - \frac{1}{2} \, z^{2} + \frac{1}{2} \right) \frac{\partial}{\partial y } + \left( y z + x \right) \frac{\partial}{\partial z } \\
\varepsilon_{3} = \left( x z + y \right) \frac{\partial}{\partial x } + \left( y z - x \right) \frac{\partial}{\partial y } + \left( -\frac{1}{2} \, x^{2} - \frac{1}{2} \, y^{2} + \frac{1}{2} \, z^{2} + \frac{1}{2} \right) \frac{\partial}{\partial z }$
\end{NBoutM}
\noindent To ensure evaluations in this particular frame, we must communicate the change-of-frame formula to \Sage:
\begin{NBin}
P = U.automorphism_field()
for i in M.irange():
	for j in M.irange():
		P[j,i] = E_U[i][j]
\end{NBin}
\begin{NBin}
U.set_change_of_frame(stereoN.frame(), E_U, P)
\end{NBin}
The subset $U$ differs from $M$ only by a one dimensional slit and is therefore dense in $M$. As we know that $\left(\varepsilon_{0},\varepsilon_{1},\varepsilon_{2},\varepsilon_{3}\right)$ defines a \emph{global} frame, its components can be easily and uniquely extended to all of $M$. For this, we use the method \texttt{add\_comp\_by\_continuation}:
\begin{NBin}
for i in M.irange():
	E[i].add_comp_by_continuation(stereoS.frame(), W)
\end{NBin}
And again, we declare the change of frame:
\begin{NBin}
P = V.automorphism_field()
for i in M.irange():
	for j in M.irange():
		P[j,i] = E.restrict(V)[i][j]
\end{NBin}
\begin{NBin}
V.set_change_of_frame(stereoS.frame(), E.restrict(V), P)
\end{NBin}
In order to reduce the computation time, we examine the $\hat{A}$-class on the open subset $U \subset M$ first. The final result can be obtained by continuation. For this purpose, we define the tangent bundle over $U$:
\begin{NBin}
TU = U.tangent_bundle(); print(TU)
\end{NBin}
\begin{NBout}
Tangent bundle TU over the Open subset U of the 4-dimensional Lorentzian manifold M
\end{NBout}
Notice that the $\hat{A}$-class is already predefined:
\begin{NBin}
A = TU.characteristic_class('AHat'); A
\end{NBin}
\begin{NBoutM}
$\hat{A}(TU)$
\end{NBoutM}
As discussed before, its holomorphic function is given by:
\begin{NBin}
A.function()
\end{NBin}
\begin{NBoutM}
$\frac{\sqrt{x}}{2 \, \sinh\left(\frac{1}{2} \, \sqrt{x}\right)}$
\end{NBoutM}
We are ready to define the Berger metric, at least on the subset $U$:
\begin{NBin}
a = function('a')
\end{NBin}
\begin{NBin}
g = U.metric()
g.add_comp(E_U)[0, 0] = - 1
g.add_comp(E_U)[1, 1] = a(t)^2
g.add_comp(E_U)[2, 2] = 1
g.add_comp(E_U)[3, 3] = 1
g.display(E_U)
\end{NBin}
\begin{NBoutM}
$g = -\varepsilon^{0}\otimes \varepsilon^{0} + a\left(t\right)^{2} \varepsilon^{1}\otimes \varepsilon^{1} +\varepsilon^{2}\otimes \varepsilon^{2} +\varepsilon^{3}\otimes \varepsilon^{3}$
\end{NBoutM}
The corresponding connection is automatically computed by \Sage:
\begin{NBin}
nab = g.connection(); nab
\end{NBin}
\begin{NBoutM}
$\nabla_g$
\end{NBoutM}
Finally, we perform the computation of the $\hat{A}$-form with respect to this connection~$\nabla_g$:
\begin{NBin}
A_form = A.get_form(nab) # long time
A_form.display_expansion(E_U, stereoN)
\end{NBin}
\begin{NBoutM}
$\hat{A}(TU, \nabla_g) = \left[ 1 \right]_0 + \left[ 0 \right]_1 + \left[ 0 \right]_2 + \left[ 0 \right]_3 \\ \phantom{\hat{A}(TU, \nabla_g) = \left[ 1 \right]_0 } + \left[ \left( \frac{4 \, {\left(a\left(t\right)^{3} - a\left(t\right)\right)} \frac{\partial\,a}{\partial t} - \frac{\partial\,a}{\partial t} \frac{\partial^2\,a}{\partial t ^ 2}}{24 \, \pi^{2}} \right) \varepsilon^{0}\wedge \varepsilon^{1}\wedge \varepsilon^{2}\wedge \varepsilon^{3} \right]_4$
\end{NBoutM}
To attain $\hat{A}(TM, \nabla_g)$ in all given coordinates, we still have to extend the result onto~$M$. With respect to the global frame $(\varepsilon_0, \varepsilon_1,\varepsilon_2,\varepsilon_3)$, the form $\hat{A}(TU, \nabla_g)$ only depends on the global coordinate $t$. This makes the continuation trivial. Besides, one is interested, for the most part, in characteristic forms outside a set of measure zero. Hence, we terminate our calculation at this point.

\section{Conclusion and Perspectives}\label{sec:perspective}
\subsection{Summary and Conclusion}
All parts presented in this thesis are fully implemented and integrated into the \Sage project and are officially available since version 9. With over $14,\!000$ lines of new code, this includes: general improvements of existing code, vector bundles, sections, bundle connections, mixed differential forms and finally characteristic classes. All these extensions yield new tools for both teaching and research. In education, they can be used to create new examples to illustrate the notion of vector bundles and characteristic classes. With respect to research, recall that on manifolds without boundary, two characteristic forms of the same characteristic class have \emph{topologically invariant integrals}. That is, integrating each top form over the whole manifold yields the same result; which follows immediately from Stokes's theorem. Since all computations are based on explicit expressions, the corresponding integrals can be at least evaluated numerically. Therefore, our current implementation provides a functional tool for applications in differential topology as well as index theory and allows for the comparison of differential geometric and topological invariants on manifolds with an empty or, at least, specifically chosen boundary. A possible extension to manifolds with arbitrary, non-empty boundary is discussed in the following section.

\begin{table}[t]
	\centering
	\begin{tabular}{lr}
		\toprule
		\textbf{Example} & \textbf{Wall Time} \\
		\midrule
		\hyperref[in:78]{\color{incolor}\texttt{In [78]}} & \texttt{299\,ms} \\
		\hyperref[in:86]{\color{incolor}\texttt{In [86]}} & \texttt{560\,ms} \\
		\hyperref[in:98]{\color{incolor}\texttt{In [98]}} & \texttt{2.2\,s} \\
		\hyperref[in:117]{\color{incolor}\texttt{In [117]}} & \texttt{1\,h\ 2\,min\ 55\,s} \\
		\bottomrule
	\end{tabular}
	\caption{Preformed on an \texttt{Intel(R) Core(TM) i3-3120M CPU @ 2.50GHz} machine using \texttt{Ubuntu 18.04.3 LTS} and \Sage version \texttt{9.0.beta1} without parallelization.}\label{tab:con:times}
\end{table}

\paragraph{Wall Times.}
In Table~\ref{tab:con:times}, we list the wall times of our computations of characteristic forms performed in Section~\ref{sec:char:ch_char},~\ref{sec:char:taut},~\ref{sec:char:euler} and~\ref{sec:char:berger}. The first three examples yield short computation times as expected. The dimension of the base space and the rank of the vector bundle do not exceed two, which is computationally simple for \Sage. In contrast, the last example yields a wall time of more than one hour. Here, we are dealing with four dimensions and more complicated symbolic expressions. On top of that, parallelization is deactivated and division free algorithms are used in the background. These algorithms are not optimized for speed. Notice that $\Sphere^3$ is a Lie group, and using structure coefficients would provide a shortcut in the calculation. However, since general Lie groups are currently not supported, stating the global frame in stereographic coordinates is the only viable option.

\subsection{Outlook and Future Prospects}
As an open-source piece of software, \Sage is always in a state of progress; and there are tickets still open at this point.\footnote{As of 02/2020, the tickets \ticket{28629}, \ticket{28640}, \ticket{28854} and \ticket{28963} are still open.} Nevertheless, our modifications provide a solid starting point and already provide a powerful tool. In this section, we give a glimpse of possible extensions to our work and collect some ideas for future versions of \Sage.

\paragraph{Vector Bundles.} Regarding vector bundles, there is still much work to do. Many features are yet unimplemented. This specifically involves bundle metrics and bundle automorphisms as well as tensor product bundles, duals and pullbacks.\footnote{The latter are not a severe restriction as they can still be considered as separate vector bundles and initialized manually.} To achieve this, the current tensor field code can be generalized to vector bundles again. Undertaking this task, it is equally beneficial to merge tensor fields entirely into the setup of vector bundles. This particularly includes structured inheritance trees in order to remove code~duplication. %

Another issue concerns bundle connections. We are naturally interested in applying them to local sections. This could be accomplished as follows. Suppose we have a local section and a vector field defined on the same domain. By using the decomposition in~\eqref{eq:vec:sec_decomp} and making use of~\eqref{eq:vec:def_con_form} together with Leibniz's rule, we can derive a straightforward formula in terms of connection forms. This certainly involves actions of one forms on vector fields. Fortunately, they are integrated in \Sage already. Since each bundle connection is designed to store its connection forms, this formula could easily be implemented.

\paragraph{Characteristic Classes.} With respect to the Chern--Weil theory, our implementation already provides a comprehensive tool for applications. Nevertheless, there are still some things left to do. Currently, skew-symmetric curvature matrices must be inserted manually to obtain the characteristic form of a Pfaffian class. This, however, is not necessary once an orientation is chosen. In fact, one might find a general formula which could be implemented directly. Such a formula exists at least by applying the Gram--Schmidt procedure to local frames. Still, there might be a more concise formula that is easier to apply.

As Table~\ref{tab:con:times} indicates, computation times rise quickly with increasing complexity. For that purpose, it might be useful to support parallelization with respect to characteristic forms. One might also think about more sophisticated matrix algorithms for background tasks, especially in view of mixed differential forms. This could decrease computation times and make our implementation even more useful for applications in general relativity and mathematics.

As aforementioned, if our manifold has non-empty boundary, two characteristic forms in general no longer yield topologically invariant integrals. More precisely, both forms differ by an exact differential form which is the exterior derivative of the so-called \defstyle{transgression form}. Applying Stokes's theorem, its integral over the boundary yields the difference of these integrals. Transgression forms play an important role in gravitational physics as they occur, for example, in the index formula for the Dirac operator on Lorentzian manifolds~\cite{baer:math_chiral}. A concrete formula that could be embedded into \Sage is presented in~\cite[p.~299, Eq.~(6.4)]{eguchi:grav}. The way we designed characteristic classes within \Sage provides a solid foundation for this purpose.

Unfortunately, Stiefel--Whitney classes are still missing in \Sage. They are characteristic classes over the trivial field $\ZZ / 2 \ZZ$ and encode essential information about the underlying vector bundle. For example, the first Stiefel--Whitney class describes whether a vector bundle is orientable or not~\cite[Thm.~1.2]{lawson:spin}. The second Stiefel--Whitney class, on the other hand, provides us with information about the existence of a spin structure on an orientable manifold~\cite[Thm.~1.7]{lawson:spin}. However, as the de~Rham cohomology has coefficients in $\RR$ or $\CC$, there is little hope to obtain these classes by adopting our approach in Chern--Weil theory.
\paragraph{Lie Groups.}
Having the additional structure of a Lie group, most computations might get simpler. In particular, the Levi--Civita connection can be easily computed when using structure coefficients. Up to now, \Sage supports only \emph{nilpotent} Lie groups~\cite{sagemath:reference:nilpotent}. The general case would be a tremendous advantage to facilitate computations and enhance usability. Possible benefits can be inferred directly from Section~\ref{sec:char:berger} where the computation has seen to be expensive.

\phantomsection	
\addcontentsline{toc}{section}{References}
\begin{spacing}{1}
	\AtNextBibliography{\footnotesize}
	\printbibliography
\end{spacing}

\end{document}